\documentclass{article}


\PassOptionsToPackage{numbers,sort&compress}{natbib}
\usepackage[preprint]{neurips_2022}




\usepackage[utf8]{inputenc} 
\usepackage[T1]{fontenc}    
\usepackage{hyperref}       
\usepackage{url}            
\usepackage{booktabs}       
\usepackage{amsfonts}       
\usepackage{nicefrac}       
\usepackage{microtype}      
\usepackage[dvipsnames,svgnames,table]{xcolor}         
\usepackage{amsmath}
\usepackage{mathtools}
\usepackage{cleveref}
\usepackage{amsthm}
\usepackage{amssymb}
\usepackage{algorithm}
\usepackage[noend]{algpseudocode}
\usepackage{comment}
\usepackage{mathtools}
\usepackage{wrapfig}
\usepackage{bm}
\usepackage[normalem]{ulem}

\usepackage[shortlabels]{enumitem}
\setenumerate[1]{leftmargin=2em, itemsep=0pt, partopsep=0pt, parsep=\parskip, topsep=0pt}
\setenumerate[2]{leftmargin=2em, itemsep=0pt, partopsep=0pt, parsep=\parskip, topsep=0pt}
\setitemize[1]{leftmargin=2em, itemsep=0pt, partopsep=0pt, parsep=\parskip, topsep=0pt}
\setitemize[2]{leftmargin=2em, itemsep=0pt, partopsep=0pt, parsep=\parskip, topsep=0pt}

\usepackage{tikz}
\usetikzlibrary{arrows.meta, shapes}

\newtheorem{theorem}{Theorem}[section]
\newtheorem{lemma}[theorem]{Lemma}

\newtheorem{property}{Property}[section]
\newtheorem{definition}{Definition}[section]
\newtheorem{example}[theorem]{Example}
\newtheorem{assumption}{Assumption}[section]

\newtheorem{corollary}[theorem]{Corollary}

\usepackage{etoolbox}
\pretocmd{\lemma}{\crefalias{theorem}{lemma}}{}{}
\pretocmd{\corollary}{\crefalias{theorem}{corollary}}{}{}
\pretocmd{\proposition}{\crefalias{theorem}{proposition}}{}{}
\Crefname{assumption}{Assumption}{Assumptions}

\newcommand{\norm}[1]{\left\lVert#1\right\rVert}
\newcommand{\abs}[1]{\left\lvert#1\right\rvert}
\newcommand{\Hre}{H_{\mathrm{re}}}

\newcommand{\diag}{\mathrm{diag}}


\newcommand{\MPC}{\mathsf{MPC}}
\newcommand{\OPT}{\mathsf{OPT}}
\newcommand{\ALG}{\mathsf{ALG}}
\newcommand{\cost}{\mathrm{cost}}

\allowdisplaybreaks

\newcommand{\yiheng}[1]{\textcolor{blue}{[Yiheng says: #1]}}
\newcommand{\guannan}[1]{\textcolor{cyan}{[Guannan says: #1]}}
\newcommand{\yang}[1]{\textcolor{purple}{[Yang says: #1]}}
\newcommand{\tongxin}[1]{\textcolor{brown}{[Tongxin says: #1]}}

\newcommand{\tabincell}[2]{\begin{tabular}{@{}#1@{}} #2 \end{tabular}}

\title{Bounded-Regret MPC via Perturbation Analysis: Prediction Error, Constraints, and Nonlinearity}

%

\author{%
  Yiheng Lin \\
  California Institute of Technology \\
  Pasadena, CA, USA \\
  \texttt{yihengl@caltech.edu}
  \And 
  Yang Hu \\
  Harvard University \\
  Cambridge, MA, USA \\
  \texttt{yanghu@g.harvard.edu}
  \AND
  Guannan Qu \\
  Carnegie Mellon University \\
  Pittsburgh, PA, USA \\
  \texttt{gqu@andrew.cmu.edu}
  \And
  Tongxin Li \\
  The Chinese University of Hong Kong (Shenzhen) \\
  Shenzhen, Guangdong, China \\
  \texttt{litongxin@cuhk.edu.cn}
  \AND
  Adam Wierman \\
  California Institute of Technology \\
  Pasadena, CA, USA \\
  \texttt{adamw@caltech.edu}
}

\begin{document}

\maketitle

\renewcommand{\thefootnote}{\fnsymbol{footnote}}
\renewcommand{\thefootnote}{\arabic{footnote}}








\begin{abstract}
We study Model Predictive Control (MPC) and propose a general analysis pipeline to bound its dynamic regret. The pipeline first requires deriving a perturbation bound for a finite-time optimal control problem. Then, the perturbation bound is used to bound the per-step error of MPC, which leads to a bound on the dynamic regret. Thus, our pipeline reduces the study of MPC to the well-studied problem of perturbation analysis, enabling the derivation of regret bounds of MPC under a variety of settings. To demonstrate the power of our pipeline, we use it to generalize existing regret bounds on MPC in linear time-varying (LTV) systems to incorporate prediction errors on costs, dynamics, and disturbances. Further, our pipeline leads to regret bounds on MPC in systems with nonlinear dynamics and constraints. 

\end{abstract}

\section{Introduction}\label{sec:intro}

Model Predictive Control (MPC) is an optimal control approach that solves a Finite-Time Optimal Control Problem (FTOCP) using future predictions in a receding horizon manner \cite{garcia1989model}. It is a flexible approach that is able to accommodate nonlinear and time-varying dynamics, state and actuation constraints, and general cost functions \cite{rosolia2017learning, korda2018linear, allgower2012nonlinear, falcone2007linear}. As a result, it is broadly applied in a wide spectrum of control problems, including robotics \cite{wieber2006trajectory, gu2006receding, shim2003decentralized, diedam2008online, neunert2016fast}, autonomous vehicles \cite{morgan2014model, richards2006robust, cairano2008MPC, stewart2008model, amari2008unified, Hatanaka2009explicit, cairano2013vehicle}, power systems \cite{gonzalez2019powergrid, santhosh2020windforecast, lin2012online, li2018model, shetaya2017model, parisio2014model, yaramasu2014predictive}, process control \cite{wang2016combined, clarke1988application, ellis2014tutorial}, etc.

Despite the popularity of MPC, its theoretic analysis has been quite challenging. Early works along this line focused on the stability and recursive feasibility of MPC \cite{diehl2010lyapunov, angeli2011average, angeli2016theoretical, grune2020economic}. More recently, there has been tremendous interest in providing \text{finite-time learning-theoretic} performance guarantees for MPC, such as regret and/or competitive ratio bounds \cite{yu2020competitive, yu2020power}. For example, progress has recently been made toward (i) regret analysis of MPC in linear time-invariant (LTI) systems with prediction errors on the trajectory to track \cite{zhang2021regret}, (ii) the dynamic regret and competitive ratio bounds of MPC under linear time-varying (LTV) dynamics with exact predictions \cite{lin2021perturbation}, and (iii) exponentially decaying perturbation bounds of the finite-time optimal control problem in time-varying, constrained, and non-linear systems \cite{shin2020decentralized, shin2021controllability}. Beyond MPC, providing regret and/or competitive ratio guarantees for a variety of (predictive) control policies has been a focus in recent years. Examples include RHGC \cite{li2020online, li2019online} and AFHC \cite{lin2012online, chen2015online} for online control/optimization with prediction horizons, OCO-based controllers \cite{agarwal2019online, agarwal2019logarithmic} for no-regret online control, and variations of ROBD for competitive online control without predictions \cite{goel2019beyond, shi2020online} or with delayed observations \cite{pan2021online}. In addition, regret lower bounds have been studied in known LTI systems \cite{goel2020power} and unknown LTV systems \cite{minasyan2021online}. 

A promising analysis approach that has emerged from the literature studying MPC and, more generally, predictive control, is the use of perturbation analysis techniques, or more particularly, the use of so-called exponential decaying perturbation bounds. Such techniques underlie the results in \cite{lin2021perturbation, shin2020decentralized, shin2021controllability, zhang2021regret}. 
This research direction is particularly promising since perturbation bounds exist for FTOCP in many dynamical systems, e.g., \cite{xu2019exponentially, na2020superconvergence, shin2021exponential, lee2006continuity, fiacco1990sensitivity}, and thus it potentially allows the derivation of regret and/or competitive ratio bounds in a variety of settings.
However, to this point the approach has only yielded results in unconstrained linear systems with no prediction errors (e.g., \cite{lin2021perturbation}), and often requires adjusting MPC to include a counter-intuitively large re-planning window due to technical challenges in the analysis (e.g., \cite{xu2019exponentially, na2020superconvergence}).

Thus, though perturbation analysis techniques might seem promising, many important questions about applying them for the study of predictive control remain open. Firstly, one of the major reasons for the extensive application of MPC is its flexibility in incorporating constraints and nonlinear dynamics \cite{borrelli2017predictive}. However, none of the existing results and approaches can analyze the performance of MPC under constraints and/or nonlinear dynamics. In fact, the anlyasis of MPC under constraints or nonlinearity has long been known to be challenging because of the intractable form of cost-to-go functions and optimal solutions.
Secondly, prediction error is inevitable for real-world implementations of MPC due to unpredictable noise and model mismatch, yet the analysis of MPC subject to prediction errors is limited.
Thirdly, existing approaches analyze MPC in a case-by-case manner and, in most cases, the analysis framework is specific to the assumptions of the particular case (e.g. quadratic costs, perfect predictions, etc) in a way that does not generalize to other settings \cite{yu2020power, zhang2021regret, lin2021perturbation, xu2019exponentially, na2020superconvergence}. 


\textbf{Contributions.} In this paper, we propose a general analysis pipeline (Section \ref{sec:pipeline}) that converts perturbation bounds for an FTOCP into dynamic regret bounds for MPC across a variety of settings. 
More specifically, the pipeline consists of three steps (see \Cref{fig:flowchart}). In Step 1, we obtain the required perturbation bounds for the specific setting. In Step 2, as shown in \Cref{lemma:pipeline-step2}, the perturbation bounds are used to bound the \textit{per-step error}, which is defined to be the error of the MPC action against the clairvoyant optimal action (see \Cref{def:per-step-error}). In Step 3, the per-step error bound is converted to a dynamic regret bound for MPC, as shown in \Cref{thm:per-step-error-to-performance-guarantee}. The full pipeline is summarized into a \emph{Pipeline Theorm} (\Cref{thm:the-pipeline-theorem}), which directly converts perturbation bounds into bounds on the dynamic regret of MPC in general settings, including those with time-variation, prediction error, constraints, and nonlinearities.  The key technical insight that enables the pipeline is the following recursive relationship between Step 2 and Step 3 (\Cref{lemma:pipeline-step2} and \Cref{thm:per-step-error-to-performance-guarantee}): Step 2 guarantees a ``small'' per-step error $e_t$ once the current state $x_t$ of MPC is ``near'' the offline optimal trajectory ($\OPT$), while Step 3 guarantees the next state $x_{t+1}$ of MPC will be near $\OPT$ if all previous per-step errors ($\{e_\tau\}_{\tau \leq t}$) are small. Thus Step 2 and Step 3 work together to guarantee MPC states are always near $\OPT$ and thus MPC per-step errors are always small (\Cref{thm:the-pipeline-theorem}).


To demonstrate the power of the proposed pipeline, we apply it to a range of settings, as summarized in \Cref{table:settings}. Our first applications are to two settings with linear time-varying (LTV) dynamics and prediction errors on (i) disturbances, \Cref{sec:unconstrained:disturbances}, and (ii) the dynamical matrices and cost functions, \Cref{sec:unconstrained:dynamics}.  The state-of-the-art results in the LTV setting are \cite{lin2021perturbation},  which requires exact knowledge of the disturbances and of the dynamics.  To the best of our knowledge, our work provides the first regret result for MPC with prediction error on the dynamics (see \Cref{thm:perturbation:unconstrained-LTV-pred-err-dynamics}), a result that enables the bounds in settings where MPC is applied to learned dynamics \cite{papadimitriou2020control}.

Our second application is to a setting with nonlinear dynamics and constraints (\Cref{sec:general}).  We show the first dynamic regret bound for MPC under state and actuation constraints in nonlinear systems with general costs (\Cref{thm:perturbation:general-system}). Very few prior results exist for MPC in this setting, even with nonlinear dynamics or constraints individually.  The most related works are \cite{xu2019exponentially}, which studies constrained MPC, and \cite{na2020superconvergence}, which studies nonlinear MPC. In both cases, a counter-intuitive re-planning window is added to MPC to facilitate the analysis, a downside that our pipeline could avoid. Besides, \cite{xu2019exponentially} and \cite{na2020superconvergence} require exact predictions of the cost functions, dynamics, and constraints for the exponential convergence property of MPC to hold, while our result can apply to more general noisy predictions. 

\section{Preliminaries}\label{sec:preliminaries}


In this section, we first introduce the general predictive online control problem including the settings, the objective, available information, and the predictive controller class. Then, we introduce the MPC algorithm, which is a widely-used predictive controller that we focus on in this work. Specifically, we consider a general, finite-horizon, discrete-time optimal control problem with \emph{time-varying costs, dynamics and constraints}, namely
\begin{align}\label{equ:online_control_problem}
    \min_{x_{0:T}, u_{0:T-1}} &\sum_{t = 0}^{T-1} f_t(x_t, u_t; \xi_t^*) + F_T(x_T; \xi_T^*) \nonumber\\*
    \text{ s.t. }&x_{t+1} = g_{t}(x_{t}, u_{t}; \xi_{t}^*), &\forall 0 \leq t < T,\nonumber\\*
    &s_t(x_t, u_t; \xi^*_t) \leq 0, &\forall 0 \leq t < T,\\*
    &x_0 = x(0).\nonumber
\end{align}
Here, $x_t \in \mathbb{R}^n$ is the \textit{state}, $u_t \in \mathbb{R}^m$ is the \textit{control input} or \textit{action}; $f_t$ is a time-varying \textit{stage cost} function,  $g_t$ is a time-varying \textit{dynamical} function, and $s_t$ is a time-varying \textit{constraint} function, all parameterized by a ground-truth parameter $\xi_t^*$ (unknown to an online controller); and $F_T$ is a terminal cost function parameterized by $\xi_T^*$ that regularizes the terminal state.

The offline optimal trajectory $\OPT$ is obtained by solving \eqref{equ:online_control_problem} with the full knowledge of the true parameters $\xi_{0:T}^*$. In contrast, an online controller can only observe noisy estimations of the parameters in a fixed prediction horizon to decide its current action $u_t$ at each time step $t$. For example, MPC picks $u_t$ by calculating the optimal sub-trajectory confined to the prediction horizon. The objective is to design an online controller that can compete against the offline optimal trajectory $\OPT$.
We use \textit{dynamic regret} as the performance metric, which is widely used to evaluate the performance of online controllers/algorithms in the literature of online control \cite{lin2021perturbation, yu2020competitive, zhang2021regret} and online optimization \cite{li2020online, goel2019beyond, lin2020online}.
Specifically, for a concrete problem instance $(x(0), \xi_{0:T}^*)$, let $\cost(\OPT)$ denote the total cost incurred by $\OPT$, and $\cost(\ALG)$ denote the total cost incurred by an online controller $\ALG$. The \textit{dynamic regret} is defined as the worst-case additional cost incurred by $\ALG$ against $\OPT$, i.e., $\sup_{x(0), \xi_{0:T}^*} \left(\cost(\ALG) - \cost(\OPT)\right)$.

The formulation in \eqref{equ:online_control_problem} is general enough to include a variety of challenging settings. In this paper, we consider three important settings to illustrate how to apply our analysis pipeline. The settings differ in (a) the form of costs, dynamics, and constraints, and (b) the quantities in the system to be predicted (i.e., parameterized by $\xi_t^*$), and the prediction error allowed. An overview of the settings is presented in Table \ref{table:settings} below.

\vspace*{-4pt}
\begin{table}[H]
  \caption{Overview of the settings considered in this paper}\label{table:settings}
  \footnotesize\centering\vspace*{-6pt}
  \begin{tabular}{c|ccc|cc}
     \specialrule{1.0pt}{0pt}{0pt}
     \textbf{Section} & \textbf{Costs} & \textbf{Dynamics} & \textbf{Constraints} & \textbf{Prediction $\bm{\xi_t}$} & \textbf{Prediction error} \\\hline
     \ref{sec:unconstrained:disturbances}  & decomposable & LTV & none & disturbance: $w_t$ & arbitrary \\\hline
     \ref{sec:unconstrained:dynamics}   & quadratic & LTV & none & \tabincell{c}{cost: $Q_t, R_t, \bar{x}_t$ \\ dynamics: $A_t, B_t$}& sufficiently small \\\hline
     \ref{sec:general} & general & \tabincell{c}{non-linear\\time-varying} & \tabincell{c}{non-linear\\stage constraint} & \tabincell{c}{cost: $f_t$\\dynamics: $g_t$\\constraints: $s_t$} & sufficiently small \\
     \specialrule{1.0pt}{0pt}{0pt}
  \end{tabular}
\end{table}
\vspace*{-10pt}

In each setting, we impose different assumptions on cost functions, dynamical systems, constraints, and properties of the predicted quantities as functions of parameter $\xi_t$. In general, we require well-defined costs, Lipschitz and uniformly controllable dynamics, and Lipschitzness of the predicted quantities with regard to $\xi_t$. For constraints, additional assumptions characterizing the active constraints along and near the optimal trajectory are imposed. Detailed definitions and statements are deferred to Appendix \ref{appendix:assumptions} and Sections \ref{sec:pipeline}, \ref{sec:unconstrained}, and \ref{sec:general}. To facilitate the statement of the pipeline, we assume the following \textit{universal properties} hold throughout the paper:
\begin{itemize}
    \item \textit{Stability of $\OPT$:} there exists a constant $D_{x^*}$ such that $\norm{x_t^*} \leq D_{x^*}$ for every state $x_t^*$ on the offline optimal trajectory $\OPT$.
    \item \textit{Lipschitz dynamics:} the ground-truth dynamical function $g_t(\cdot, \cdot; \xi_t^*)$ is Lipschitz in action; i.e., for any feasible $x_t, u_t, u'_t$, $g_t$ satisfies
    $\norm{g_t(x_t, u_t; \xi_t^*) - g_t(x_t, u_t'; \xi_t^*)} \leq L_g \norm{u_t - u_t'}.$
    \item \textit{Well-conditioned costs:} every stage cost $f_t(\cdot, \cdot; \xi_t^*)$ and the terminal cost $F_T(\cdot ; \xi_T^*)$ are nonnegative, convex, and $\ell$-smooth in $(x_t, u_t)$ and $x_T$, respectively.
\end{itemize}

\subsection{Predictive Online Control}\label{sec:setting:MPC}

While Step 3 (\Cref{thm:per-step-error-to-performance-guarantee}) in our pipeline can be generally applied to all online controllers, in the subsequent applications we focus on \textit{Model Predictive Control (MPC)}, a popular classical controller. In this subsection, we first define the available information (predictions) as well as its quality (prediction power), and how general predictive online controllers make decisions. Then, we define a useful optimization problem called FTOCP, and introduce MPC as a predictive online controller.


We represent the uncertainties in cost functions, dynamics, constraints, and terminal costs as function families parameterized by $\xi_t$: 
$\mathcal{F}_t \coloneqq \{ f_t(x_t, u_t; \xi_t) \mid \xi_t \in \varXi_t \}, \mathcal{G}_t \coloneqq \{ g_t(x_t, u_t; \xi_t) \mid \xi_t \in \varXi_t \},$ $\mathcal{S}_t \coloneqq \{ s_t(x_t, u_t; \xi_t) \mid \xi_t \in \varXi_t \},$ and $\mathcal{F}_T \coloneqq \{ F_T(x_T; \xi_T) \mid \xi_T \in \varXi_T \}$. The online controller knows the function families $\mathcal{F}_{0:T}$, $\mathcal{G}_{0:T-1}$, and $\mathcal{S}_{0:T-1}$ as prior knowledge, but it does not know the true parameters $\xi_{0:T}^* \in \prod_{\tau = 0}^{T} \varXi_{\tau}$. Instead, at time step $t$, the online controller has access to noisy predictions of these parameters for the future $k$ time steps (where $k$ is called the \textit{prediction horizon}), represented by $\xi_{t:t+k\mid t} \in \prod_{\tau = t}^{t+k} \varXi_{\tau}$. The parameter space $\varXi_t$ at each time step $t$ may have different dimensions.

We formally define the quality of predictions by introducing the following notion of prediction error.


\begin{definition}\label{def:pred-oracle}
    The prediction error is defined as $\rho_{t, \tau} \coloneqq \norm{\xi_{t + \tau \mid t} - \xi_{t+\tau}^*}$ for an integer $\tau \geq 0$. The power of $\tau$-step-away predictions (for parameter $\xi$) is defined as $P(\tau) \coloneqq \sum_{t = 0}^{T-\tau} \rho_{t, \tau}^2$.
\end{definition}
%
%
Under this noisy prediction model, a general predictive online controller $\ALG$ decides the control action based on the current state and the latest available predictions of future parameters. We formally define the class of predictive online controllers considered in this paper  in \Cref{def:online-controller}, which includes MPC as a special case.

\begin{definition}\label{def:online-controller}
    A predictive online controller $\ALG$ is a function that takes the current state $x_t$ and the available predictions $\xi_{t:t+k\mid t}$ as inputs at time $t$ and outputs the current control action $u_t$, i.e.,
    $u_t = \ALG(x_t, \xi_{t:t+k\mid t}).$
    We use $x_0 \xrightarrow{u_0} x_1 \xrightarrow{u_1} \cdots \xrightarrow{u_{T-1}} u_T$ to denote the trajectory achieved by $\ALG$, and use $x_0 \xrightarrow{u_0^*} x_1^* \xrightarrow{u_1^*} \cdots \xrightarrow{u_{T-1}^*} u_T^*$ to denote the offline optimal trajectory $\OPT$.
\end{definition}

A core component of both the design of online controllers and our analysis is the following \textit{finite-time optimal control problem} (FTOCP). Given a time interval $[t_1, t_2]$, the FTOCP solves the optimal sub-trajectory subjected to the given initial state $z$, terminal cost $F$, and a sequence of (potentially noisy) parameters $\xi_{t_1:t_2-1}, \zeta_{t_2}$, as formalized in the following definition.

\begin{definition}\label{def:FTOCP}
    The finite-time optimal control problem (FTOCP) over the horizon $[t_1, t_2]$, with initial state $z$, parameters $\xi_{t_1: t_2-1}$ and $\zeta_{t_2}$, and terminal cost $F(\cdot; \cdot)$, is defined as
    \begin{align}\label{equ:auxiliary_control_problem}
        \iota_{t_1}^{t_2}(z, \xi_{t_1:t_2-1}, \zeta_{t_2}; F) \coloneqq \min_{y_{t_1:t_2}, v_{t_1:t_2-1}} &\sum_{t = t_1}^{t_2-1} f_t(y_t, v_t; \xi_t) + F(y_{t_2}; \zeta_{t_2})\nonumber\\*
        \text{ s.t. }&y_{t+1} = g_{t}(y_{t}, v_{t}; \xi_{t}), &\forall t_1 \leq t < t_2,\nonumber\\*
        &s_t(y_t, v_t; \xi_t) \leq 0, &\forall t_1 \leq t < t_2,\\*
        &y_{t_1} = z,\nonumber
    \end{align}
    and a corresponding optimal solution as $\psi_{t_1}^{t_2}(z, \xi_{t_1:t_2-1}, \zeta_{t_2}; F)$. We shall use the shorthand notation $\psi_{t_1}^{t_2}(z, \xi_{t_1:t_2}; F) \coloneqq \psi_{t_1}^{t_2}(z, \xi_{t_1:t_2-1}, \xi_{t_2}; F)$ when the context is clear.
\end{definition}

Note that the formulation of the FTOCP in \Cref{def:FTOCP} does not include a terminal constraint set. To compensate for this, we allow the terminal cost $F(\cdot; \zeta_{t_2})$ to take value $+\infty$ in some subset of $\mathbb{R}^n$, and $\zeta_{t_2}$ is not necessarily an element in $\varXi_{t_2}$. For example, a terminal cost function that we frequently use later is the indicator function of the terminal parameter $\zeta_{t_2}$, where $\zeta_{t_2} \in \mathbb{R}^n$. We use $\mathbb{I}$ to denote such indicator terminal cost (i.e., $\mathbb{I}(y_{t_2}; \zeta_{t_2}) = 0$ if $y_{t_2} = \zeta_{t_2}$ and $\mathbb{I}(y_{t_2}; \zeta_{t_2}) = +\infty$ otherwise).







Finally, given the definition of the FTOCP, we are ready to formally introduce MPC. The pseudocode of this online controller is given in Algorithm \ref{alg:mpc}. Basically, at time step $t$, $\MPC_k$ solves a $k$-step predictive FTOCP using the latest available parameter predictions, and commits the first control action in the solution. When there are only fewer than $k$ steps left, $\MPC_k$ directly solves a $(T-t)$-step FTOCP at time $t$ until the end of the horizon, using the predicted real terminal cost $F_T(\cdot; \xi_{T \mid t})$. This MPC controller (and its variants) has a wide range of real-world applications. 

\vspace*{-8pt}
\begin{algorithm}[H]
\caption{Model Predictive Control ($\MPC_k$)}\label{alg:mpc}
\begin{algorithmic}[1]
\Require Specify the terminal costs $F_t$ for $k \leq t < T$.
\For{$t = 0, 1, \ldots, T-1$}
    \State $t' \gets \min\{ t+k, T \}$
    \State Observe current state $x_t$ and obtain predictions $\xi_{t:t' \mid t}$.
    \State Solve and commit control action $u_t := \psi_t^{t'}(x_t, \xi_{t:t'\mid t}; F_{t'})_{v_t}$.
\EndFor
\end{algorithmic}
\end{algorithm}

\section{The Pipeline: Bounded Regret via Perturbation Analysis}\label{sec:pipeline}


The goal of this section is to give an overview of a novel analysis pipeline that converts a perturbation bound into a bound on the dynamic regret. We begin by highlighting the form of perturbation bounds required in the pipeline, and then describe the 3-step process of applying the pipeline. In subsequent sections, we apply this pipeline to obtain new regret bounds for MPC in different settings. 

\subsection{Per-Step Error and Perturbation Bounds}

A key challenge when comparing the performance of an online controller against the offline optimal trajectory is that the online controller's state $x_t$ is different from the offline optimal state $x_t^*$ at time step $t$. Due to such discrepancy in states, we cannot simply evaluate the online controller's action $u_t$ via comparison against the offline optimal action $u_t^*$. To address this challenge, our pipeline uses the notion of per-step error (\Cref{def:per-step-error}) inspired by the performance difference lemma and its proofs in reinforcement learning (RL) \cite{lin2021perturbation}.  Specifically, we compare $u_t$ to the clairvoyant optimal action one may adopt at the same state $x_t$ if all true future parameters $\xi_{t:T}^*$ are known, which leads to the definition of \textit{per-step error} as follows.

\begin{definition}\label{def:per-step-error}
    The per-step error $e_t$ incurred by a predictive online controller $\ALG$ at time step $t$ is defined as the distance between its actual action $u_t$ and the clairvoyant optimal action, i.e.,
    \[e_t \coloneqq \norm{u_t - \psi_t^T(x_t, \xi_{t:T}^*; F_T)_{v_t}}, \text{ where }u_t = \ALG(x_t, \xi_{t:t+k\mid t}).\]
    The clairvoyant optimal trajectory starting from $x_t$ is defined as $x_{t:T\mid t}^* \coloneqq \psi_t^T(x_t, \xi_{t:T}^*; F_T)_{y_{t:T}}$.
\end{definition}

Note that the clairvoyant optimal trajectory can be viewed as being generated by an MPC controller with long enough prediction horizon and exact predictions. This notion highlights the reason why MPC can compete against the clairvoyant optimal trajectory, since the per-step error in a system controlled by $\MPC_k$ becomes
$e_t = \norm{\psi_t^{t+k}(x_t, \xi_{t:t+k\mid t}; F_{t+k})_{v_t} - \psi_t^T(x_t, \xi_{t:T}^*; F_T)_{v_t}}.$
Intuitively, the per-step error converges to zero as the prediction horizon $k$ increases and the quality of predictions improves (i.e. $\norm{\xi_{t:t+k\mid t} - \xi_{t:t+k}^*} \to 0$). 

This intuition highlights the important role of perturbation bounds in comparing online controllers against (offline) clairvoyant optimal trajectories.  
As we have discussed in Section \ref{sec:intro}, many previous works \cite{xu2019exponentially, na2020superconvergence, shin2020decentralized, shin2021controllability} have established (local) decaying sensitivity/perturbation bounds for different instances of the FTOCP \eqref{equ:auxiliary_control_problem}. These bounds may take different forms, but for the application of our pipeline we require two types of perturbation bounds that are both common in the literature:
\begin{enumerate}[nosep,leftmargin=.2in,label=(\alph*)]
    \item \textit{Perturbations of the parameters $\xi_{t_1:t_2}$ given a fixed initial state $z$}:
    \begin{equation}\label{equ:perturbation-bound-fix-initial}
        \norm{\psi_{t_1}^{t_2}\left(z, \xi_{t_1:t_2}; F\right)_{v_{t_1}} - \psi_{t_1}^{t_2}\left(z, \xi_{t_1:t_2}'; F\right)_{v_{t_1}}} \leq \left(\sum_{t=t_1}^{t_2} q_1(t - t_1) \delta_t\right) \norm{z} + \sum_{t=t_1}^{t_2} q_2(t - t_1) \delta_t,
    \end{equation}
    where $\delta_t \coloneqq \norm{\xi_t - \xi_t'}$ for $t \in [t_1, t_2]$, and scalar functions $q_1$ and $q_2$ satisfy
    $\lim_{t\to\infty}q_i(t) = 0$, $\sum_{t=0}^\infty q_i(t) \leq C_i$ for constants $C_i \geq 1, i=1, 2$.
    This perturbation bound is useful in bounding the per-step error $e_t$, as we will discuss in \Cref{lemma:pipeline-step2}. 
    \item \textit{Perturbation of the initial state $z$ given fixed parameters $\xi_{t_1:t_2}$}:
    \begin{equation}\label{equ:perturbation-bound-fix-parameters}
        \norm{\psi_{t_1}^{t_2}\left(z, \xi_{t_1:t_2}; F\right)_{y_t/v_t} - \psi_{t_1}^{t_2}\left(z', \xi_{t_1:t_2}; F\right)_{y_t/v_t}} \leq q_3(t - t_1) \norm{z - z'}, \text{ for }t \in [t_1, t_2],
    \end{equation}
    where the scalar function $q_3$ satisfies $\sum_{t=0}^\infty q_3(t) \leq C_3$ for some constant $C_3 \geq 1$. This bound is useful in preventing the accumulation of per-step errors $e_t$ throughout the horizon (see \Cref{thm:per-step-error-to-performance-guarantee}). Compared with \eqref{equ:perturbation-bound-fix-initial}, the right hand side of \eqref{equ:perturbation-bound-fix-parameters} has a simpler form. 
\end{enumerate}
Existing perturbation bounds usually combine the above two types (\eqref{equ:perturbation-bound-fix-initial} and \eqref{equ:perturbation-bound-fix-parameters}) into a single equation that characterizes perturbations on $z$ and $\xi_{t_1:t_2}$ simultaneously, e.g., \cite{lin2021perturbation, shin2021controllability}. Here, we decompose them into two separate types because they are used in different parts of our pipeline.  

\subsection{A 3-Step Pipeline from Perturbation Bounds to Regret}\label{sec:pipeline:flowchart}

\begin{wrapfigure}{r}{140pt}
    \vspace*{-35pt}
    \centering
    \begin{tikzpicture}
    \begin{scope}[yshift=0pt]
        \fill[fill=pink] (20pt, -30pt) -- (20pt, -36pt) -- (5pt, -36pt) -- (60pt, -50pt) -- (115pt, -36pt) -- (100pt, -36pt) -- (100pt, -30pt) -- cycle;
    \end{scope}
   
    \begin{scope}[yshift=-50pt]
        \fill[fill=pink] (20pt, -30pt) -- (20pt, -36pt) -- (5pt, -36pt) -- (60pt, -50pt) -- (115pt, -36pt) -- (100pt, -36pt) -- (100pt, -30pt) -- cycle;
    \end{scope}
    
    \begin{scope}[yshift=-100pt]
        \fill[fill=pink] (20pt, -30pt) -- (20pt, -36pt) -- (5pt, -36pt) -- (60pt, -50pt) -- (115pt, -36pt) -- (100pt, -36pt) -- (100pt, -30pt) -- cycle;
    \end{scope}
    
    \draw[draw=black, line width=1.0pt] (0pt, 0pt) rectangle (120pt, -30pt);
    \node[align=center] at (60pt, -15pt) {\textit{Step 1.} obtain perturbation\\bounds  \eqref{equ:perturbation-bound-fix-initial} \& \eqref{equ:perturbation-bound-fix-parameters}};
    
    \draw[draw=black, line width=1.0pt] (0pt, -50pt) rectangle (120pt, -80pt);
    \node[align=center] at (60pt, -65pt) {\textit{Step 2.} bound the per-step\\error $e_t$ (\Cref{lemma:pipeline-step2})};
    
    \draw[draw=black, line width=1.0pt] (0pt, -100pt) rectangle (120pt, -130pt);
    \node[align=center] at (60pt, -115pt) {\textit{Step 3.} bound dynamic\\regret (\Cref{thm:per-step-error-to-performance-guarantee})};
    
    \draw[draw=black, line width=1.0pt] (0pt, -150pt) rectangle (120pt, -165pt);
    \node[align=center] at (60pt, -157.5pt) {dynamic regret bound};
    
    \draw[draw=black, line width=0.5pt, dashed] (-5pt, -32pt) rectangle (125pt, -148pt);
    \node at (130pt, -90pt) {\rotatebox{-90}{The Pipeline Theorem \ref{thm:the-pipeline-theorem}}};
\end{tikzpicture}
    \vspace*{-15pt}
    \caption{Illustrative diagram of the 3-step pipeline from perturbation analysis to bounded regret.}\label{fig:flowchart}
    \vspace*{-20pt}
\end{wrapfigure}

An overview of the pipeline is given in Figure \ref{fig:flowchart}, which illustrates the high-level ideas of the pipeline that starts by obtaining perturbation bounds, proceeds to bound the per-step error using perturbation bounds, and finally combines the per-step error and perturbation bounds to bound the dynamic regret. In the following we describe each step in detail.

\textbf{Step 1: Obtain the perturbation bounds given in (\ref{equ:perturbation-bound-fix-initial}) and (\ref{equ:perturbation-bound-fix-parameters}).} The form of the perturbation bounds depends heavily on the specific form of the FTOCP, and thus the derivation requires case-by-case study (e.g., see Section \ref{sec:unconstrained} and Section \ref{sec:general}).
However, off-the-shelf bounds are available in most cases, as there has been a rich literature on perturbation analysis of control systems (e.g., \cite{xu2019exponentially, na2020superconvergence, shin2020decentralized, shin2021controllability, lin2021perturbation} and the references therein).  The following property summarizes precisely what is expected to be derived for bounds \eqref{equ:perturbation-bound-fix-initial} and \eqref{equ:perturbation-bound-fix-parameters} in Steps 2 and 3.

\begin{property}\label{assump:pipeline-perturbation-bounds}
Suppose there exists a positive constant $R$ such that the perturbation bound \eqref{equ:perturbation-bound-fix-initial} holds for the following specifications: with $t_1 = t$ and $t_2 = t+k$ for $t < T-k$, \eqref{equ:perturbation-bound-fix-initial} holds for $F: \mathbb{R}^n \to \mathbb{R}^n$ be the identity function $\mathbb{I}$, and
\[z \in \mathcal{B}(x_t^*, R);~ \xi_{t:t+k-1} \in \varXi_{t:t+k-1}, \xi_{t:t+k-1}' = \xi_{t:t+k-1}^*;~ \xi_{t+k}, \xi_{t+k}' \in \mathcal{B}(x_{t+k}^*, R) \subseteq \mathbb{R}^n;\]
with $t_1 = t$ and $t_2 = T$ for $t \geq T-k$, \eqref{equ:perturbation-bound-fix-initial} holds for
$z \in \mathcal{B}(x_t^*, R);~ \xi_{t:T} \in \varXi_{t:T}, \xi_{t:T} = \xi_{t:T}^*;~ F = F_T.$
Further, perturbation bound \eqref{equ:perturbation-bound-fix-parameters} holds for any $z, z' \in \mathcal{B}(x_t^*, R)$ and $\xi_{t_1:t_2} = \xi_{t_1:t_2}^*$.
\end{property}

As a remark, note that for the first specification of Property \ref{assump:pipeline-perturbation-bounds} with $t_1 = t$ and $t_2 = t + k$, $\xi_{t+k}$ and $\xi_{t+k}'$ live in the state space $\mathbb{R}^n$ rather than $\varXi_{t+k}$ because they represent the target terminal state of the FTOCP solved by $\MPC_k$. Intuitively, Property \ref{assump:pipeline-perturbation-bounds} states that perturbation bounds \eqref{equ:perturbation-bound-fix-initial} and \eqref{equ:perturbation-bound-fix-parameters} hold in a small neighborhood (specifically, a ball with radius $R$) around the offline optimal trajectory $\OPT$, which is much weaker than the global exponentially decaying perturbation bounds required by previous work (e.g., \cite{lin2021perturbation}) in the following sense:
(i) in the general settings where the dynamical function $g_t$ is non-linear, or where there are constraints on states and actions, one cannot hope the perturbation bound to hold globally for all possible parameters \cite{shin2021exponential,shin2021controllability,na2020superconvergence};
(ii) the decay functions $\{q_i\}_{i=1, 2, 3}$ are only required to converge to zero and satisfy $\sum_{\tau = 0}^\infty q_i(\tau) \leq C_i$, which means the exponential decay rate as in \cite{lin2021perturbation} is not necessary --- in fact, polynomial decay rates can also satisfy these properties, which greatly broadens the applicability of our pipeline.


\textbf{Step 2: Bound the per-step error $\bm{e_t}$.} The core of the analysis is to apply the perturbation bounds to bound the per-step error. For $\MPC_k$, under Property \ref{assump:pipeline-perturbation-bounds}, this step can be done in a universal way, as summarized in \Cref{lemma:pipeline-step2} below. A complete proof of \Cref{lemma:pipeline-step2} can be found in \Cref{appendix:lemma-pipeline-step2}.
\begin{lemma}\label{lemma:pipeline-step2}
Let Property \ref{assump:pipeline-perturbation-bounds} hold. Suppose the current state $x_t$ satisfies $x_t \in \mathcal{B}(x_t^*, {R}/{C_3})$ and the terminal cost $F_{t+k}$ of $\MPC_k$ is set to be the indicator function of some state $\bar{y}(\xi_{t+k\mid t})$ that satisfies $\bar{y}(\xi_{t+k\mid t}) \in \mathcal{B}(x_{t+k}^*, R)$ for $t < T - k$. Then, the per-step error of $\MPC_k$ is bounded by
\begin{equation}\label{lemma:pipeline-step2:conclusion}
    e_t \leq \sum_{\tau = 0}^{k}\left(\left(\frac{R}{C_3} + D_{x^*}\right) \cdot q_1(\tau) + q_2(\tau)\right)\rho_{t, \tau} + 2R\left(\left(\frac{R}{C_3} + D_{x^*}\right) \cdot q_1(k) + q_2(k)\right).
\end{equation}
\end{lemma}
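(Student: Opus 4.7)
The plan is to reduce the per-step error, which compares a truncated noisy FTOCP against the full-horizon clairvoyant FTOCP, to a comparison between two FTOCPs of the \emph{same} horizon $[t,t+k]$, so that the parameter-perturbation bound \eqref{equ:perturbation-bound-fix-initial} can be applied directly. The bridge is Bellman's principle of optimality: the first $k$ steps of $\psi_t^T(x_t,\xi_{t:T}^*;F_T)$ must themselves solve an FTOCP with horizon $[t,t+k]$ whose terminal cost forces the state at time $t+k$ to equal the clairvoyant optimal state $x_{t+k\mid t}^*$. Concretely, I would first establish the identity
\[
\psi_t^T(x_t,\xi_{t:T}^*;F_T)_{v_t} \;=\; \psi_t^{t+k}\!\bigl(x_t,\xi_{t:t+k-1}^*,x_{t+k\mid t}^*;\mathbb{I}\bigr)_{v_t},
\]
so that $e_t$ is the difference of two applications of $\psi_t^{t+k}(\cdot;\mathbb{I})$ starting from the common initial state $z=x_t$: one with parameters $(\xi_{t:t+k-1\mid t},\bar{y}(\xi_{t+k\mid t}))$ and one with parameters $(\xi_{t:t+k-1}^*,x_{t+k\mid t}^*)$.

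Next I would verify the preconditions needed to invoke Property~\ref{assump:pipeline-perturbation-bounds} at this first slot $t_1=t$, $t_2=t+k$, $F=\mathbb{I}$. The initial state $z=x_t$ lies in $\mathcal{B}(x_t^*,R/C_3)\subseteq\mathcal{B}(x_t^*,R)$ by hypothesis, and the ``perturbed'' terminal target $\bar{y}(\xi_{t+k\mid t})$ lies in $\mathcal{B}(x_{t+k}^*,R)$ by assumption. The nontrivial step is to show that the ``unperturbed'' terminal target $x_{t+k\mid t}^*$ also lies in $\mathcal{B}(x_{t+k}^*,R)$. This is exactly where the second perturbation bound is used: applying \eqref{equ:perturbation-bound-fix-parameters} with $t_1=t,t_2=T$, $\xi=\xi^*$, $z=x_t$ and $z'=x_t^*$ yields
\[
\bigl\|x_{t+k\mid t}^* - x_{t+k}^*\bigr\| \;\leq\; q_3(k)\,\|x_t-x_t^*\| \;\leq\; q_3(k)\cdot\tfrac{R}{C_3} \;\leq\; R,
\]
using $q_3(k)\leq C_3$. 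A triangle inequality then gives $\|\bar{y}(\xi_{t+k\mid t})-x_{t+k\mid t}^*\|\leq 2R$, which is what produces the $2R$ factor in the second term of \eqref{lemma:pipeline-step2:conclusion}.

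With all preconditions verified, I would then invoke \eqref{equ:perturbation-bound-fix-initial} with $\delta_\tau=\rho_{t,\tau}$ for $\tau=0,\ldots,k-1$ and $\delta_k\leq 2R$, multiply the $q_1$-part by the simple bound $\|x_t\|\leq\|x_t-x_t^*\|+\|x_t^*\|\leq R/C_3+D_{x^*}$, and collect terms. The $\tau=k$ summand contributes the standalone $2R\bigl((R/C_3+D_{x^*})q_1(k)+q_2(k)\bigr)$ term, and extending the running sum on the remaining indices from $\tau\leq k-1$ to $\tau\leq k$ only loosens the bound, producing exactly \eqref{lemma:pipeline-step2:conclusion}. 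A parallel (and in fact slightly simpler) case handles $t\geq T-k$, where $\MPC_k$ already solves the full remaining horizon with $F_T$ and Property~\ref{assump:pipeline-perturbation-bounds}'s second specification applies verbatim, with no terminal target to compare.

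The main obstacle is the tight coupling between the two types of perturbation bounds in verifying admissibility: we can only apply \eqref{equ:perturbation-bound-fix-initial} once we know $x_{t+k\mid t}^*$ stays within $\mathcal{B}(x_{t+k}^*,R)$, and this is precisely what forces the hypothesis $x_t\in\mathcal{B}(x_t^*,R/C_3)$ with the deflating factor $1/C_3$ (rather than the naive $1/q_3(k)$, which would depend on $k$). Getting the constants to line up so that the same $R$ works for both bounds and is preserved step-to-step is the subtle point; once this is handled, the remainder is a single application of \eqref{equ:perturbation-bound-fix-initial} followed by bookkeeping.
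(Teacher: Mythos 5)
Your proposal is correct and follows essentially the same route as the paper's proof: the principle-of-optimality identity reducing $\psi_t^T(x_t,\xi_{t:T}^*;F_T)_{v_t}$ to $\psi_t^{t+k}(x_t,\xi_{t:t+k-1}^*,x_{t+k\mid t}^*;\mathbb{I})_{v_t}$, the use of \eqref{equ:perturbation-bound-fix-parameters} with $q_3(k)\le C_3$ to place $x_{t+k\mid t}^*$ in $\mathcal{B}(x_{t+k}^*,R)$ and hence bound the terminal-target discrepancy by $2R$, a single application of \eqref{equ:perturbation-bound-fix-initial} with $\|x_t\|\le R/C_3+D_{x^*}$, and the simpler direct application of the second specification for $t\ge T-k$ are all exactly the steps the paper takes.
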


\Cref{lemma:pipeline-step2} is a straight-forward implication of perturbation bound \eqref{equ:perturbation-bound-fix-initial} specified in Property \ref{assump:pipeline-perturbation-bounds}. To see this, for $t < T - k$, note that the per-step error $e_t$ can be bounded by
\begin{subequations}\label{lemma:pipeline-step2:e0}
\begin{align}
    e_t &= \norm{\psi_t^{t+k}(x_t, \xi_{t:t+k-1\mid t}, \bar{y}(\xi_{t+k\mid t}); \mathbb{I})_{v_t} - \psi_t^T(x_t, \xi_{t:T}^*; F_T)_{v_t}} \label{lemma:pipeline-step2:e0:s0}\\
    &= \norm{\psi_t^{t+k}(x_t, \xi_{t:t+k-1\mid t}, \bar{y}(\xi_{t+k\mid t}); \mathbb{I})_{v_t} - \psi_t^{t+k}(x_t, \xi_{t:t+k-1}^*, x_{t+k\mid t}^*; \mathbb{I})_{v_t}} \label{lemma:pipeline-step2:e0:s1}\\
    &\leq \sum_{\tau = 0}^{k-1} \big( \norm{x_t} \cdot q_1(\tau) + q_2(\tau) \big) \rho_{t, \tau} + \big( \norm{x_t} \cdot q_1(k) + q_2(k)\big)\norm{\bar{y}(\xi_{t+k\mid t}) - x_{t+k\mid t}^*}.\label{lemma:pipeline-step2:e0:s2}
\end{align}
\end{subequations}
Here, we apply the principle of optimality to conclude that the optimal trajectory from $x_t$ to $x_{t+k\mid t}^*$ (i.e., $\psi_t^{t+k}(x_t, \xi_{t:t+k-1}^*, x_{t+k\mid t}^*; \mathbb{I})$ in \eqref{lemma:pipeline-step2:e0:s1}) is a sub-trajectory of the clairvoyant optimal trajectory from $x_t$ (i.e., $\psi_t^T(x_t, \xi_{t:T}^*; F_T)$ in \eqref{lemma:pipeline-step2:e0:s0}), and \eqref{lemma:pipeline-step2:e0:s2} is obtained by directly applying perturbation bound \eqref{equ:perturbation-bound-fix-initial}. Note that $\norm{x_t} \leq \frac{R}{C_3} + D_{x^*}$, and that both $\bar{y}(\xi_{t+k\mid t})$ and $x_{t+k\mid t}^*$ are in $\mathcal{B}(x_{t+k}^*; R)$ by assumption and by perturbation bound \eqref{equ:perturbation-bound-fix-parameters} specified in Property \ref{assump:pipeline-perturbation-bounds}, we conclude that \eqref{lemma:pipeline-step2:conclusion} hold for $t < T - k$. The case $t \geq T - k$ can be shown similarly. We defer the detailed proof to \Cref{appendix:lemma-pipeline-step2}.



\textbf{Step 3: Bound the dynamic regret by $\bm{\sum_{t=0}^{T-1} e_t^2}$.} This final step builds upon perturbation bound \eqref{equ:perturbation-bound-fix-parameters}, and aims at deriving dynamic regret bounds in a universal way, as stated in \Cref{thm:per-step-error-to-performance-guarantee} below. Specifically, under the assumption that a local decaying perturbation bound in the form of \eqref{equ:perturbation-bound-fix-parameters} holds around the offline optimal trajectory $\OPT$, and the property that per-step errors $e_t$ are sufficiently small, we can show that the online controller will not leave the ``safe region'' near the offline optimal trajectory as specified in Property \ref{assump:pipeline-perturbation-bounds}, and thus the dynamic regret of $\ALG$ is bounded as in \eqref{thm:per-step-error-to-performance-guarantee:dynamic-regret} (note that $\ALG$ is not confined to MPC, but is allowed to be any algorithm with bounded per-step errors). A complete proof of \Cref{thm:per-step-error-to-performance-guarantee} can be found in Appendix \ref{appendix:per-step-error-to-performance-guarantee}.

\begin{lemma}\label{thm:per-step-error-to-performance-guarantee}
Let Property \ref{assump:pipeline-perturbation-bounds} hold. If the per-step errors of $\ALG$ satisfy $e_\tau \leq {R}/{(C_3^2 L_g)}$ for all time steps $\tau < t$, the trajectory of $\ALG$ will remain close to $\OPT$ at time $t$, i.e. $x_t \in \mathcal{B}(x_t^*, {R}/{C_3})$.
Further, if $e_t \leq {R}/{(C_3^2 L_g)}$ for all $t < T$, the dynamic regret of $\ALG$ is upper bounded by
\begin{equation}\label{thm:per-step-error-to-performance-guarantee:dynamic-regret}
    \cost(\ALG) - \cost(\OPT) = O\left(\sqrt{\cost(\OPT)\cdot \sum_{t=0}^{T-1} e_t^2} + \sum_{t=0}^{T-1} e_t^2\right).
\end{equation}
\end{lemma}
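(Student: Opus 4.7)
The plan is to prove the two parts in sequence, with part 1 giving a state-tracking bound that justifies applying Property \ref{assump:pipeline-perturbation-bounds} throughout the trajectory, and part 2 converting the resulting state/action discrepancies into a cost gap.

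For part 1, I would proceed by induction on $t$. Introduce the auxiliary notation $x_{s\mid\tau}^\star := \psi_\tau^T(x_\tau, \xi_{\tau:T}^*; F_T)_{y_s}$ for the clairvoyant-optimal state at time $s$ if one started from ALG's state $x_\tau$ at time $\tau$; observe that $x_{s\mid 0}^\star = x_s^*$ and $x_{\tau\mid\tau}^\star = x_\tau$. Then telescope
\[
 x_t - x_t^* \;=\; \sum_{\tau=0}^{t-1}\bigl(x_{t\mid\tau+1}^\star - x_{t\mid\tau}^\star\bigr).
\]
Each summand compares two clairvoyant trajectories started at time $\tau+1$ from $x_{\tau+1}$ and from $x_{\tau+1\mid\tau}^\star$ respectively, with identical (true) parameters. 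Perturbation bound \eqref{equ:perturbation-bound-fix-parameters} gives $\lVert x_{t\mid\tau+1}^\star - x_{t\mid\tau}^\star\rVert \le q_3(t-\tau-1)\,\lVert x_{\tau+1}-x_{\tau+1\mid\tau}^\star\rVert$. The second factor is $\lVert g_\tau(x_\tau,u_\tau;\xi_\tau^*) - g_\tau(x_\tau, u_{\tau\mid\tau}^\star;\xi_\tau^*)\rVert \le L_g e_\tau$ by Lipschitz dynamics. Summing and using $\sum_\tau q_3(\tau) \le C_3$ together with the hypothesis $e_\tau \le R/(C_3^2 L_g)$ gives $\lVert x_t - x_t^*\rVert \le R/C_3$. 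The one subtle point is that each invocation of \eqref{equ:perturbation-bound-fix-parameters} requires both initial states to lie in $\mathcal{B}(x_\tau^*,R)$; this is precisely what the induction hypothesis delivers (both $x_\tau$ and $x_{\tau\mid\tau-1}^\star$ are within $R/C_3 \le R$ of $x_\tau^*$), so the argument closes cleanly.

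For part 2, I would exploit $\ell$-smoothness, convexity, and nonnegativity of the stage costs. The standard descent inequality for $\ell$-smooth convex nonnegative $f$ gives $\lVert \nabla f(z)\rVert^2 \le 2\ell f(z)$. Applying smoothness at $(x_t^*,u_t^*)$ yields
\[
 f_t(x_t,u_t;\xi_t^*) - f_t(x_t^*,u_t^*;\xi_t^*) \;\le\; \lVert\nabla f_t(x_t^*,u_t^*;\xi_t^*)\rVert\cdot \lVert z_t\rVert + \tfrac{\ell}{2}\lVert z_t\rVert^2,
\]
where $z_t := (x_t-x_t^*,\,u_t-u_t^*)$, and similarly for the terminal cost. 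Summing over $t$ and applying Cauchy--Schwarz on the linear term,
\[
 \cost(\ALG)-\cost(\OPT) \;\le\; \sqrt{2\ell\,\cost(\OPT)}\,\sqrt{\textstyle\sum_t \lVert z_t\rVert^2} + \tfrac{\ell}{2}\sum_t \lVert z_t\rVert^2.
\]
It remains to show $\sum_t \lVert z_t\rVert^2 = O(\sum_t e_t^2)$. The state part is already controlled by $\lVert x_t-x_t^*\rVert \le L_g \sum_{\tau<t} q_3(t-\tau-1) e_\tau$ from part 1. For the action part, I would telescope analogously: $u_t - u_t^* = (u_t - u_{t\mid t}^\star) + \sum_{\tau=0}^{t-1}(u_{t\mid\tau+1}^\star - u_{t\mid\tau}^\star)$, bound the first piece by $e_t$, and the summands by $q_3(t-\tau-1)L_g e_\tau$ via the action-version of \eqref{equ:perturbation-bound-fix-parameters}. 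Two applications of the Cauchy--Schwarz / Young-style identity $(\sum_\tau a_\tau b_\tau)^2 \le (\sum_\tau a_\tau)(\sum_\tau a_\tau b_\tau^2)$ with $a_\tau = q_3(\cdot)$, followed by swapping the order of summation and using $\sum q_3 \le C_3$, convert the double convolution into $C_3^2 \sum_t e_t^2$. Putting these pieces together delivers \eqref{thm:per-step-error-to-performance-guarantee:dynamic-regret}.

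I expect the main obstacle to be bookkeeping rather than technical depth: one must (i) phrase the induction in part 1 so that at each time step every state invoked in a perturbation bound provably lies in the $R$-neighborhood required by Property \ref{assump:pipeline-perturbation-bounds}, and (ii) handle the convolution-type double sum in part 2 without losing any powers of $C_3$ or $L_g$. A secondary subtlety is that the regret decomposition is driven purely by $\ell$-smoothness and nonnegativity; no strong convexity is used, which is why the bound scales as $\sqrt{\cost(\OPT)\cdot\sum e_t^2}$ rather than $\sum e_t^2$ alone.
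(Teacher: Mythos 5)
Your proposal is correct and follows essentially the same route as the paper's proof: the same telescoping of $x_t - x_t^*$ over successive clairvoyant trajectories combined with the principle of optimality and the Lipschitz-dynamics bound $\lVert x_{\tau+1} - x_{\tau+1\mid\tau}^\star\rVert \le L_g e_\tau$, the same Cauchy--Schwarz treatment of the convolution $\sum_i q_3(i)e_{t-i-1}$, and the same smoothness-based cost comparison (the paper invokes Lemma F.2 of \cite{lin2021perturbation} with a tunable weight $\eta$ and then optimizes $\eta$, which is arithmetically equivalent to your direct use of $\lVert\nabla f\rVert^2\le 2\ell f$ plus Cauchy--Schwarz). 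One small imprecision: $x_{\tau\mid\tau-1}^\star$ need only lie in $\mathcal{B}(x_\tau^*,R)$ (via $q_3(1)\le C_3$ applied to the induction hypothesis), not in $\mathcal{B}(x_\tau^*,R/C_3)$ as you parenthetically claim, but this does not affect the argument.
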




\textbf{Summary.} Combining Steps 2 and 3 of the pipeline yields the following \textit{Pipeline Theorem} for $\MPC_k$ (see \Cref{thm:the-pipeline-theorem}). Basically it states that, when the prediction horizon $k$ is sufficiently large and the prediction errors $\rho_{t, \tau}$ are sufficiently small, \Cref{lemma:pipeline-step2} and \Cref{thm:per-step-error-to-performance-guarantee} can work together to make sure that $\MPC_k$ never leaves a $(R/C_3)$-ball around the offline optimal trajectory $\OPT$; thus we obtain a dynamic regret bound.

\begin{theorem}[The Pipeline Theorem]\label{thm:the-pipeline-theorem}
Let Property \ref{assump:pipeline-perturbation-bounds} hold. Suppose the terminal cost $F_{t+k}$ of $\MPC_k$ is set to be the indicator function of some state $\bar{y}(\xi_{t+k\mid t})$ that satisfies $\bar{y}(\xi_{t+k\mid t}) \in \mathcal{B}(x_{t+k}^*, R)$ for all time steps $t < T - k$. Further, suppose the prediction errors $\rho_{t, \tau}$ are sufficiently small and the prediction horizon $k$ is sufficiently large, such that
\[\sum_{\tau = 0}^{k}\left(\left(\frac{R}{C_3} + D_{x^*}\right) \cdot q_1(\tau) + q_2(\tau)\right)\rho_{t, \tau} + 2R\left(\left(\frac{R}{C_3} + D_{x^*}\right) \cdot q_1(k) + q_2(k)\right) \leq \frac{R}{C_3^2 L_g}.\]
Then, the trajectory of $\MPC_k$ will remain close to $\OPT$, i.e. $x_t \in \mathcal{B}(x_t^*, {R}/{C_3})$ for all time steps $t$, and the dynamic regret of $\MPC_k$ is upper bounded by
\begin{equation}\label{thm:the-pipeline-theorem:statement}
    \cost(\MPC_k) - \cost(\OPT) = O\left(\sqrt{\cost(\OPT) \cdot E} + E\right),
\end{equation}
where $E \coloneqq \sum_{\tau = 0}^{k-1}\left(q_1(\tau) + q_2(\tau)\right)P(\tau) + \left(q_1(k)^2 + q_2(k)^2\right) T$.
\end{theorem}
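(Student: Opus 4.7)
The plan is to close a chicken-and-egg loop between \Cref{lemma:pipeline-step2} and \Cref{thm:per-step-error-to-performance-guarantee} by induction on $t$, and then to translate the resulting per-step error bound into the claimed regret via a Cauchy--Schwarz step. Concretely, \Cref{lemma:pipeline-step2} only guarantees a small $e_t$ when we already know $x_t \in \mathcal{B}(x_t^*, R/C_3)$, while the first half of \Cref{thm:per-step-error-to-performance-guarantee} only guarantees $x_t \in \mathcal{B}(x_t^*, R/C_3)$ when we already know $e_\tau \leq R/(C_3^2 L_g)$ for all $\tau < t$. The hypothesis of the theorem is precisely tuned so that, once $x_t$ is inside the safe ball, the upper bound supplied by \Cref{lemma:pipeline-step2} does not exceed the threshold required by \Cref{thm:per-step-error-to-performance-guarantee}, which is what lets the two lemmas reinforce each other.

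First, I would set up the induction on $t$. The base case $t=0$ is immediate because $x_0 = x_0^*$ lies in $\mathcal{B}(x_0^*, R/C_3)$. For the inductive step, assume $x_\tau \in \mathcal{B}(x_\tau^*, R/C_3)$ for all $\tau \leq t$. Since the terminal cost of $\MPC_k$ is the indicator of $\bar{y}(\xi_{t+k\mid t}) \in \mathcal{B}(x_{t+k}^*, R)$ (for $t < T-k$; for $t \geq T-k$ the FTOCP already uses the true terminal cost $F_T$, and an analogous bound applies), the hypotheses of \Cref{lemma:pipeline-step2} are satisfied, and we obtain
\[
    e_t \leq \sum_{\tau=0}^{k}\left(\left(\tfrac{R}{C_3}+D_{x^*}\right) q_1(\tau)+q_2(\tau)\right)\rho_{t,\tau} + 2R\left(\left(\tfrac{R}{C_3}+D_{x^*}\right) q_1(k)+q_2(k)\right) \leq \tfrac{R}{C_3^2 L_g},
\]
where the last inequality is the theorem's smallness hypothesis. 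Combined with the inductive hypothesis on $\tau \leq t$, the first half of \Cref{thm:per-step-error-to-performance-guarantee} then gives $x_{t+1} \in \mathcal{B}(x_{t+1}^*, R/C_3)$, closing the induction. Thus the trajectory of $\MPC_k$ stays in the safe ball for every $t$, and the per-step bound above holds at every step.

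Having proved $e_t \leq R/(C_3^2 L_g)$ for all $t < T$, the second half of \Cref{thm:per-step-error-to-performance-guarantee} gives
\[
    \cost(\MPC_k) - \cost(\OPT) = O\!\left(\sqrt{\cost(\OPT)\cdot \textstyle\sum_{t=0}^{T-1} e_t^2} + \textstyle\sum_{t=0}^{T-1} e_t^2\right),
\]
so it remains to show $\sum_{t=0}^{T-1} e_t^2 = O(E)$. Writing $a_\tau := (R/C_3 + D_{x^*}) q_1(\tau) + q_2(\tau)$, the per-step bound splits into a ``prediction-error'' part $\sum_{\tau=0}^k a_\tau \rho_{t,\tau}$ and a ``truncation'' part $2 R\, a_k$. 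Using $(x+y)^2 \le 2x^2 + 2y^2$, I would handle the truncation part by the trivial estimate $\sum_{t=0}^{T-1}(2Ra_k)^2 = O\bigl((q_1(k)^2 + q_2(k)^2)\,T\bigr)$, and handle the prediction-error part by Cauchy--Schwarz in the $\tau$-index,
\[
    \Bigl(\textstyle\sum_{\tau=0}^k a_\tau \rho_{t,\tau}\Bigr)^2 \le \Bigl(\textstyle\sum_{\tau=0}^k a_\tau\Bigr)\cdot \Bigl(\textstyle\sum_{\tau=0}^k a_\tau \rho_{t,\tau}^2\Bigr),
\]
and then swapping the order of summation and using $\sum_{t=0}^{T-\tau} \rho_{t,\tau}^2 = P(\tau)$ together with $\sum_{\tau=0}^\infty a_\tau \leq (R/C_3 + D_{x^*})C_1 + C_2 = O(1)$. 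This yields $\sum_t \bigl(\sum_\tau a_\tau \rho_{t,\tau}\bigr)^2 = O\bigl(\sum_{\tau=0}^{k} (q_1(\tau)+q_2(\tau)) P(\tau)\bigr)$, which combined with the truncation estimate gives exactly the definition of $E$ and hence \eqref{thm:the-pipeline-theorem:statement}.

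The main obstacle is the recursive interdependence in the induction: one must verify that the constants in \Cref{lemma:pipeline-step2} and in the trajectory-closeness half of \Cref{thm:per-step-error-to-performance-guarantee} are consistent, i.e. that the explicit per-step bound produced by \Cref{lemma:pipeline-step2} is dominated by the threshold $R/(C_3^2 L_g)$ that \Cref{thm:per-step-error-to-performance-guarantee} requires, without any circularity in the constants $R$, $C_3$, $L_g$, and $D_{x^*}$. This is precisely what the smallness hypothesis on $\rho_{t,\tau}$ and $k$ encodes, so once that hypothesis is invoked the induction goes through cleanly; the subsequent conversion to the regret bound via Cauchy--Schwarz and the telescoping of $P(\tau)$ is comparatively routine.
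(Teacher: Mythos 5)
Your proposal is correct and follows essentially the same route as the paper's proof: the same induction that alternates \Cref{lemma:pipeline-step2} (per-step error bound once $x_t$ is in the safe ball) with the first half of \Cref{thm:per-step-error-to-performance-guarantee} (staying in the safe ball given small past per-step errors), followed by a Cauchy--Schwarz estimate showing $\sum_{t} e_t^2 = O(E)$ and an application of the second half of \Cref{thm:per-step-error-to-performance-guarantee}. The only cosmetic difference is that you first split the square via $(x+y)^2 \le 2x^2 + 2y^2$ before applying Cauchy--Schwarz to the prediction-error part, whereas the paper applies a single weighted Cauchy--Schwarz to the entire sum; both yield the same bound.
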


The proof of \Cref{thm:the-pipeline-theorem} can be found in \Cref{appendix:the-pipeline-theorem}. To interpret the dynamic regret bound in \eqref{thm:the-pipeline-theorem:statement}, note that we have $\cost(\OPT) = O(T)$ as a result of our model assumptions. Thus, the dynamic regret of $\ALG$ is in the order of $\sqrt{T E} + E$. When there is no prediction error, the regret bound $O((q_1(k) + q_2(k))\cdot T)$ reproduces the result in \cite{lin2021perturbation}, and the bound will degrade as the prediction error increases. It is also worth noticing that, when the prediction power improves over time as the online controller learns the system better and $k = \Omega(\ln{T})$, the dynamic regret can be $o(T)$. 

\section{Unconstrained LTV Systems}\label{sec:unconstrained}

We now illustrate the use of the Pipeline Theorem by applying it in the context of (unconstrained) LTV systems with prediction errors, either on disturbances or the dynamical matrices. 

\subsection{Prediction Errors on Disturbances}\label{sec:unconstrained:disturbances}

In this section, we consider the following special case of problem \eqref{equ:online_control_problem}, where the dynamics is LTV and the prediction error can only occur on the disturbances $w_t$:
\begin{align}\label{equ:online_control_problem:unconstrained-LTV-disturbance}
    \min_{x_{0:T}, u_{0:T-1}} &\sum_{t = 0}^{T-1} \left(f_t^x(x_t) + f_t^u(u_t)\right) + F_T(x_T)\nonumber\\*
    \text{ s.t. }&x_{t+1} = A_{t} x_{t} + B_{t} u_{t} + w_{t}(\xi_t^*), &\forall 0 \leq t < T,\\*
    &x_0 = x(0).\nonumber
\end{align}
All necessary assumptions on the system are summarized below in Assumption \ref{assump:unconstrained:disturbances}.

\begin{assumption}\label{assump:unconstrained:disturbances}
Assume the following holds for the online control problem instance \eqref{equ:online_control_problem:unconstrained-LTV-disturbance}:
\begin{itemize}[nosep,leftmargin=.2in]
    \item \textit{Cost functions:} $\{f_t^x\}_{t=0}^{T-1}, \{f_t^u\}_{t=0}^{T-1}, F_T$ are nonnegative $\mu$-strongly convex and $\ell$-smooth. And we assume $f_t^x(0) = f_t^u(0) = F_T(0) = 0$ without the loss of generality.
    \item \textit{Dynamical systems:} the LTV system $\{A_t, B_t\}$ is $\sigma$-uniform controllable with controllability index $d$, and $\norm{A_t} \leq a,~ \norm{B_t} \leq b,~ \text{and}~\Vert B_t^\dagger\Vert \leq b'$ hold for all $t$, where $B_t^\dagger$ denotes the Moore–Penrose inverse of matrix $B_t$.. The detailed definitions can be found in Assumption \ref{assump:unconstrained-LTV-pred-err-disturbance} in Appendix \ref{appendix:unconstrained-LTV-disturbances}.
    \item \textit{Predicted quantities:} $\norm{w_t(\xi_t)} \leq D_w$ holds for all $\xi_t \in \varXi_t$ and all $t$. For every time step $t$, $w_t(\xi_t)$ is a $L_w$-Lipschitz function in $\xi_t$, i.e.,
    $\norm{w_t(\xi_t) - w_t(\xi_t')} \leq L_w \norm{\xi_t - \xi_t'}, \forall \xi_t, \xi_t' \in \varXi_t.$
\end{itemize}
\end{assumption}

Under Assumption \ref{assump:unconstrained:disturbances}, we can again apply the perturbation bounds shown in \cite{lin2021perturbation} to show Property \ref{assump:pipeline-perturbation-bounds}.  In particular, we already know that for some constants $H_1 \geq 1$ and $\lambda_1 \in (0, 1)$, perturbation bounds \eqref{equ:perturbation-bound-fix-initial} and \eqref{equ:perturbation-bound-fix-parameters} hold globally for $q_1(t) = 0$, $q_2(t) = H_1 \lambda_1^t$, and $q_3(t) = H_1 \lambda_1^t$. Since both of these perturbation bounds hold globally, radius $R$ in Property \ref{assump:pipeline-perturbation-bounds} can be set arbitrarily, and we shall take $R \coloneqq \max\left\{ D_{x^*}, \frac{2L_g H_1^3}{(1 - \lambda_1)^3} \right\}$ so that \Cref{thm:the-pipeline-theorem} can be applied to $\MPC_k$ with terminal cost $F_{t+k}(\cdot; \xi_{t\mid t+k}) \equiv \mathbb{I}(\cdot;0)$.  This leads to the following dynamic regret bound:

\begin{theorem}\label{thm:perturbation:unconstrained-LTV-pred-err-disturbance}
In the unconstrained LTV setting \eqref{equ:online_control_problem:unconstrained-LTV-disturbance}, under Assumption \ref{assump:unconstrained:disturbances}, when the prediction horizon $k$ is sufficiently large such that $k \geq \ln\left(\frac{4 H_1^3 L_g}{(1 - \lambda_1)^2}\right)/\ln(1/\lambda_1)$, the dynamic regret of $\MPC_k$ (Algorithm \ref{alg:mpc}) with terminal cost $F_{t+k}(\cdot; \xi_{t\mid t+k}) \equiv \mathbb{I}(\cdot; 0)$ is bounded by
$\cost(\MPC_k) - \cost(\OPT) \leq O\left(\sqrt{T \cdot \sum_{\tau = 0}^{k-1} \lambda_1^\tau P(\tau) + \lambda_1^{2k} T^2} + \sum_{\tau = 0}^{k-1} \lambda_1^\tau P(\tau)\right).$
\end{theorem}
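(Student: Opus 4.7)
The proof is a direct invocation of the Pipeline Theorem (\Cref{thm:the-pipeline-theorem}) once the requisite perturbation bounds are in hand. The first step is to instantiate \Cref{assump:pipeline-perturbation-bounds}: for unconstrained LTV systems with prediction errors only on disturbances, the perturbation bounds established in \cite{lin2021perturbation} hold \emph{globally} (that is, for all initial states and all parameters in $\varXi_{t:T}$), with rate functions $q_1(t) = 0$, $q_2(t) = H_1 \lambda_1^t$, and $q_3(t) = H_1 \lambda_1^t$ for some $H_1 \geq 1$ and $\lambda_1 \in (0,1)$. The fact that $q_1 \equiv 0$ reflects a structural feature of this setting: since only additive disturbances are perturbed and the dynamics are linear, the sensitivity of the optimal control to the parameters does not couple multiplicatively with $\norm{z}$. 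Consequently $C_3 = H_1/(1-\lambda_1)$, and because the bounds are global, the radius $R$ is free to be chosen. I would set $R := \max\{D_{x^*}, 2 L_g H_1^3/(1-\lambda_1)^3\}$.

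Next I would check the hypotheses of \Cref{thm:the-pipeline-theorem}. The terminal cost $\mathbb{I}(\cdot; 0)$ corresponds to $\bar{y}(\xi_{t+k\mid t}) \equiv 0$, which lies in $\mathcal{B}(x_{t+k}^*, R)$ since $R \geq D_{x^*} \geq \norm{x_{t+k}^*}$. With $q_1 \equiv 0$, the smallness condition of the Pipeline Theorem collapses to
\[
\sum_{\tau = 0}^{k} H_1 \lambda_1^\tau \rho_{t,\tau} \;+\; 2 R H_1 \lambda_1^k \;\leq\; \frac{R(1-\lambda_1)^2}{H_1^2 L_g}.
\]
The prescribed lower bound on $k$ is exactly what is needed to ensure $\lambda_1^k \leq (1-\lambda_1)^2/(4 H_1^3 L_g)$, which makes the tail term $2 R H_1 \lambda_1^k$ at most half of the right-hand side. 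The remaining prediction-error contribution $\sum_\tau H_1 \lambda_1^\tau \rho_{t,\tau}$ is absorbed into $R$ using the implicit boundedness of $\varXi_t$ (via the Lipschitz and boundedness conditions on $w_t(\cdot)$) and by enlarging $R$ by at most a constant factor if necessary; this does not change the order in $T$ of the final bound.

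Applying \Cref{thm:the-pipeline-theorem} with these ingredients yields
\[
\cost(\MPC_k) - \cost(\OPT) = O\!\left(\sqrt{\cost(\OPT) \cdot E} + E\right),
\quad\text{where}\quad
E = H_1 \sum_{\tau=0}^{k-1} \lambda_1^\tau P(\tau) + H_1^2 \lambda_1^{2k}\, T.
\]
The final step is to note that under \Cref{assump:unconstrained:disturbances}—bounded $\OPT$ trajectory, $\ell$-smooth nonnegative costs with $f_t^x(0)=f_t^u(0)=0$—one has $\cost(\OPT) = O(T)$, and the plugged-in form of $E$ reproduces the stated bound $O\!\left(\sqrt{T \cdot \sum_\tau \lambda_1^\tau P(\tau) + \lambda_1^{2k} T^2} + \sum_\tau \lambda_1^\tau P(\tau)\right)$ after folding the universal constants $H_1$, $\lambda_1$, $L_g$, $D_{x^*}$ into the $O(\cdot)$.

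The main obstacle, and the place where care is needed, is the uniform-in-$t$ verification of the smallness condition $e_t \leq R/(C_3^2 L_g)$ in the presence of arbitrary disturbance prediction errors. The key observation that makes this manageable is precisely $q_1 = 0$: per-step errors do not feed back multiplicatively into $\norm{x_t}$, so the ``chicken-and-egg'' coupling between state drift and per-step error that plagues the general case (and that will require a delicate alternating-refinement argument in \Cref{sec:unconstrained:dynamics} and \Cref{sec:general}) is absent here. Everything else is essentially bookkeeping of constants.
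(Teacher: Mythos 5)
Your proposal is correct and follows essentially the same route as the paper's own proof: invoke Theorem 3.3 of \cite{lin2021perturbation} to get Property \ref{assump:pipeline-perturbation-bounds} globally with $q_1 \equiv 0$ and $q_2(t)=q_3(t)=H_1\lambda_1^t$, pick $R=\max\{D_{x^*}, 2L_gH_1^3/(1-\lambda_1)^3\}$, verify the smallness condition via $\rho_{t,\tau}\le 1$ and the stated lower bound on $k$, and apply \Cref{thm:the-pipeline-theorem}. The only cosmetic difference is that the paper absorbs the prediction-error term directly through the choice of $R$ and the bound $\rho_{t,\tau}\le 1$ (from $\mathrm{diam}(\varXi_t)\le 1$) rather than "enlarging $R$ if necessary," but the mechanism is the same.
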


A complete proof of \Cref{thm:perturbation:unconstrained-LTV-pred-err-disturbance} can be found in Appendix \ref{appendix:unconstrained-LTV-disturbances}. When there are no prediction errors, the bound in \Cref{thm:perturbation:unconstrained-LTV-pred-err-disturbance} reduces to $O(\lambda_1^k T)$, which reproduces the result of \cite{lin2021perturbation}. Further, it is also worth noticing that due to the form of discounted sum $\sum_{\tau=0}^{k-1} \lambda_1^{\tau} P(\tau)$, prediction errors for the near future matter more than those for the far future.

\subsection{Prediction Error on Costs and Dynamical Matrices}\label{sec:unconstrained:dynamics}

We now consider prediction errors on cost functions and dynamics, rather than disturbances. Specifically, we consider the following instance of problem \eqref{equ:online_control_problem}:
\begin{align}\label{equ:online_control_problem:unconstrained-LTV-dynamics}
    \min_{x_{0:T}, u_{0:T-1}} &\sum_{t = 0}^{T-1} \left( (x_t - \bar{x}_t(\xi_t^*))^\top Q_t(\xi_t^*) (x_t - \bar{x}_t(\xi_t^*)) + u_t^{\top} R_t(\xi_t^*) u_t \right) + F_T(x_T; \xi_t^*)\nonumber\\*
    \text{ s.t. }&x_{t+1} = A_t(\xi_{t}^*)\cdot x_{t} + B_t(\xi_{t}^*)\cdot u_{t} + w_t(\xi_{t}^*), \hspace{6em}\forall 0 \leq t < T,\\*
    &x_0 = x(0), \nonumber
\end{align}
where the terminal cost is given by $F_T(x_T; \xi_T^*) \coloneqq (x_T - \bar{x}_T(\xi_T^*))^\top P_T(\xi_T^*) (x_T - \bar{x}(\xi_T^*))$.

All necessary assumptions on the system are summarized below in Assumption \ref{assump:unconstrained:dynamics}.

\begin{assumption}\label{assump:unconstrained:dynamics}
Assume the following holds for the online control problem instance \eqref{equ:online_control_problem:unconstrained-LTV-dynamics}:
\begin{itemize}[nosep,leftmargin=.2in]
    \item \textit{Cost:} $\mu I \preceq Q_t(\xi_t) \preceq \ell I, \mu I \preceq R_t(\xi_t) \preceq \ell I,$ and $\mu I \preceq P_T(\xi_T) \preceq \ell I$, $\forall \xi_t \in \varXi_t,$ $
    \forall t$.
    \item \textit{Dynamical systems:} both the ground-truth LTV system $\{A_t(\xi_t^*), B_t(\xi_t^*)\}_{t=0}^{T-1}$ and any predicted LTV system $\{A_t(\xi_{t+\tau\mid t}), B_t(\xi_{t+\tau\mid t})\}_{\tau = 0}^{k-1}$ (for all $\xi_t \in \varXi_t$ and all $t$) satisfy the controllability assumptions in Assumption \ref{assump:unconstrained-LTV-pred-err-dynamics} in Appendix \ref{appendix:unconstrained-LTV-dynamics}.
    \item \textit{Predicted quantities:} bounds $\norm{w_t(\xi_t)} \leq D_w, \norm{\bar{x}_t(\xi_t)} \leq D_{\bar{x}}, \norm{A_t(\xi_t)} \leq a, \norm{B_t(\xi_t)} \leq b$ hold for all $\xi_t \in \varXi_t$ and all $t$. $L_A$ is a uniform Lipschitz constant such that
    $\norm{A_t(\xi_t) - A_t(\xi_t')} \leq L_A \norm{\xi_t - \xi_t'}, \forall \xi_t, \xi_t' \in \varXi_t$ holds for all $t$, and $L_B, L_Q, L_R, L_{\bar{x}}, L_w$ are defined similarly.
\end{itemize}
\end{assumption}

Under Assumption \ref{assump:unconstrained:dynamics}, we can show that for some constants $H_2 \geq 1$ and $\lambda_2 \in (0, 1)$, perturbation bounds \eqref{equ:perturbation-bound-fix-initial} and \eqref{equ:perturbation-bound-fix-parameters} hold globally for $q_1(t) = H_2 \lambda_2^{2t}$, $q_2(t) = H_2 \lambda_2^t$, and $q_3(t) = H_2 \lambda_2^t$ under the specifications of Property \ref{assump:pipeline-perturbation-bounds}. Thus, Property \ref{assump:pipeline-perturbation-bounds} holds for arbitrary $R$, and we can set $R = D_x^* + D_{\bar{x}}$ so that \Cref{thm:the-pipeline-theorem} can be applied to $\MPC_k$ with terminal cost $F_{t+k}(\cdot; \xi_{t\mid t+k}) = \mathbb{I}(\cdot ;\bar{x}(\xi_{t\mid t+k}))$, which leads to the following dynamic regret bound:

\begin{theorem}\label{thm:perturbation:unconstrained-LTV-pred-err-dynamics}
In the unconstrained LTV setting \eqref{equ:online_control_problem:unconstrained-LTV-dynamics}, under Assumption \ref{assump:unconstrained:dynamics}, when the prediction horizon $k \geq O(1)$ \footnote{When we say $z \geq O(1)$, we mean there exists $c = O(1)$ such that $z \geq c$ holds.} and the prediction errors satisfy $\sum_{\tau=0}^k \lambda_2^{2\tau} \rho_{t, \tau} \leq \Omega(1)$ for all $t$, the dynamic regret of $\MPC_k$ (Algorithm \ref{alg:mpc}) with terminal cost $F_{t+k}(\cdot; \xi_{t\mid t+k}) = \mathbb{I}(\cdot ;\bar{x}(\xi_{t\mid t+k}))$ is bounded by
$\cost(\MPC_k) - \cost(\OPT) \leq O\left(\sqrt{T \cdot \sum_{\tau = 0}^{k-1} \lambda_2^\tau P(\tau) + \lambda_2^{2k} T^2} + \sum_{\tau = 0}^{k-1} \lambda_2^\tau P(\tau)\right).$
\end{theorem}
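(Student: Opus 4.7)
The plan is to invoke the Pipeline Theorem (\Cref{thm:the-pipeline-theorem}) with the choices $q_1(t)=H_2\lambda_2^{2t}$, $q_2(t)=H_2\lambda_2^{t}$, $q_3(t)=H_2\lambda_2^{t}$, and $R=D_{x^*}+D_{\bar x}$ indicated in the paragraph preceding the statement. This reduces the task to three sub-tasks: (i) verifying Property~\ref{assump:pipeline-perturbation-bounds} with these decay functions; (ii) checking that the hypotheses on $k$ and on $\{\rho_{t,\tau}\}$ imply the per-step error precondition $e_t\le R/(C_3^2 L_g)$ needed to apply \Cref{thm:the-pipeline-theorem}; and (iii) simplifying the resulting expression for $E$ into the bound stated in the theorem.

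For (i), I would exploit the LTV--quadratic structure: the FTOCP in \Cref{def:FTOCP} admits an affine-in-$z$ closed-form optimal solution via the discrete-time backward Riccati recursion (a value-matrix sequence $\{P_t\}$ together with an affine feedforward driven by $w_t(\xi_t)$, $\bar x_t(\xi_t)$ and the tracking terms). Under the uniform controllability in \Cref{assump:unconstrained:dynamics}, standard arguments (e.g.\ as in \cite{lin2021perturbation}) show that $P_t$ is uniformly well-conditioned and that the induced closed-loop dynamics is exponentially stable at some rate $\lambda_2\in(0,1)$, which delivers \eqref{equ:perturbation-bound-fix-parameters} with $q_3(t)=H_2\lambda_2^{t}$. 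For \eqref{equ:perturbation-bound-fix-initial}, I would split the perturbation at each time $\tau$ into a linear component (perturbations of $Q_\tau,R_\tau,\bar x_\tau,w_\tau$), which contributes the $q_2(\tau)=H_2\lambda_2^{\tau}$ term, and a bilinear component (perturbations of $A_\tau,B_\tau$ which multiply the state trajectory in the dynamics), which contributes the $q_1(\tau)\cdot\|z\|$ term. The double-decay factor $\lambda_2^{2\tau}$ in $q_1$ arises because a dynamics perturbation at time $\tau$ must propagate forward through the closed loop from time $t_1$ to $\tau$ to produce a state-sized discrepancy, and the induced costate/Riccati perturbation must then propagate backward from $\tau$ to $t_1$ to affect the action $v_{t_1}$, each contributing one factor of $\lambda_2^{\tau}$. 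The hypothesis $\sum_{\tau}\lambda_2^{2\tau}\rho_{t,\tau}\le\Omega(1)$ is exactly what keeps the perturbed Riccati recursion within a regime where this linearization (with remainder control) is valid.

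For (ii) and (iii), once Property~\ref{assump:pipeline-perturbation-bounds} is established, \Cref{lemma:pipeline-step2} gives
\[e_t \;\le\; O\!\Bigl(\sum_{\tau=0}^{k}\lambda_2^{\tau}\rho_{t,\tau}\Bigr)+O(\lambda_2^{k}),\]
after absorbing $\lambda_2^{2\tau}\le\lambda_2^{\tau}$ and using that $\bar x(\xi_{t+k\mid t})$ lies in $\mathcal{B}(x_{t+k}^*,R)$. The condition $k\ge O(1)$ drives the $O(\lambda_2^{k})$ part below $R/(C_3^2 L_g)$, while the prediction-error hypothesis controls the summation part, so the small-error precondition of \Cref{thm:the-pipeline-theorem} holds. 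Plugging $q_1,q_2$ into the definition of $E$ in \Cref{thm:the-pipeline-theorem} yields $E=O\bigl(\sum_{\tau=0}^{k-1}\lambda_2^\tau P(\tau)+\lambda_2^{2k}T\bigr)$; combining with $\cost(\OPT)=O(T)$ and $\sqrt{a+b}\le\sqrt{a}+\sqrt{b}$ reorganizes the terms into $O\bigl(\sqrt{T\sum_{\tau=0}^{k-1}\lambda_2^\tau P(\tau)+\lambda_2^{2k}T^2}+\sum_{\tau=0}^{k-1}\lambda_2^\tau P(\tau)\bigr)$, absorbing $\lambda_2^{2k}T$ into the square-root term.

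The main obstacle will be sub-task (i), specifically the parameter perturbation bound \eqref{equ:perturbation-bound-fix-initial}: unlike the disturbance-only setting of \Cref{sec:unconstrained:disturbances} where the perturbation enters only as a forcing term, perturbations on $A_t,B_t$ perturb the Riccati value matrices $P_t$ themselves and couple multiplicatively with the state trajectory, so extracting both the correct $\lambda_2^{2\tau}$ decay in $q_1$ and a clean $\lambda_2^{\tau}$ decay in $q_2$ requires a careful two-Riccati comparison rather than a single-system argument.
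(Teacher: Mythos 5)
Your overall architecture matches the paper exactly: verify Property~\ref{assump:pipeline-perturbation-bounds} with $q_1(t)=H_2\lambda_2^{2t}$, $q_2(t)=q_3(t)=H_2\lambda_2^{t}$ and $R=D_{x^*}+D_{\bar x}$, then feed everything into \Cref{thm:the-pipeline-theorem}; your sub-tasks (ii) and (iii) are essentially verbatim what the paper does in \Cref{appendix:unconstrained-LTV-dynamics}. Where you diverge is the key technical step (i). The paper does not go through a Riccati recursion at all: it writes the FTOCP as an equality-constrained QP, permutes the KKT matrix into a banded block form $\Upsilon_t^T(\xi)$ (resp.\ $\hat\Upsilon_t^{t+k}(\xi)$), proves exponential off-diagonal decay of the blocks of $\Upsilon^{-1}$ from uniform singular-value bounds (\Cref{thm:diff-inverse-sophisticated}), and then gets the parameter-perturbation bound from the exact resolvent identity $\Upsilon(\xi)^{-1}-\Upsilon(\xi')^{-1}=-\Upsilon(\xi')^{-1}(\Upsilon(\xi)-\Upsilon(\xi'))\Upsilon(\xi)^{-1}$, which produces the $\lambda_2^{|\tau-i|+|\tau-j|}$ double decay; the $\lambda_2^{2\tau}\|z\|$ term in $q_1$ is simply the $(t,t)$ block of this difference hitting the $z$-entry of the right-hand side. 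Your forward/backward-propagation heuristic is a fair physical reading of the same phenomenon, and a two-Riccati comparison could in principle deliver the same decay rates, but the KKT route is exact (no linearization, no remainder) and handles the indicator terminal cost for free by treating it as one more equality row in $\hat N$.

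Two concrete concerns with your route as sketched. First, the terminal cost here is $\mathbb{I}(\cdot;\bar x(\xi_{t+k\mid t}))$, a hard terminal equality constraint, so the vanilla backward Riccati recursion you invoke does not apply (the value matrix degenerates at the terminal time); you would need a constrained-Riccati or two-point-boundary-value variant, which you do not address. Second, you attribute the hypothesis $\sum_{\tau}\lambda_2^{2\tau}\rho_{t,\tau}\le\Omega(1)$ to keeping a linearized Riccati recursion valid. In the paper this hypothesis plays no role in establishing the perturbation bounds: those hold for arbitrary parameters in $\varXi$ because \Cref{assump:unconstrained:dynamics} assumes uniform controllability of \emph{every predicted} LTV system, not just the true one, which keeps all KKT matrices uniformly well-conditioned. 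The smallness condition is consumed downstream, in checking the precondition $e_t\le R/(C_3^2L_g)$ of \Cref{thm:the-pipeline-theorem}: because $q_1\ne 0$, the per-step error bound of \Cref{lemma:pipeline-step2} contains the term $(R/C_3+D_{x^*})\sum_\tau q_1(\tau)\rho_{t,\tau}=O(\sum_\tau\lambda_2^{2\tau}\rho_{t,\tau})$, and this is exactly what must be bounded by a constant. Relatedly, in your step (ii) you absorb $\lambda_2^{2\tau}\le\lambda_2^{\tau}$ before checking smallness, which hides the fact that it is the $q_1$-weighted (state-multiplied) part of the error that generates the $\Omega(1)$ condition in the theorem statement. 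Neither issue is fatal, but both would need to be repaired for your route to close.
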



The exact constants and a complete proof of \Cref{thm:perturbation:unconstrained-LTV-pred-err-dynamics} can be found in \Cref{appendix:unconstrained-LTV-dynamics}. Compared with \Cref{thm:perturbation:unconstrained-LTV-pred-err-disturbance}, \Cref{thm:perturbation:unconstrained-LTV-pred-err-dynamics} additionally requires the discounted total prediction errors $\sum_{\tau=0}^k \lambda_2^{2\tau} \rho_{t, \tau}$ to be less than or equal to some constant. This is actually expected, and emphasizes the critical difference between the prediction errors on dynamical matrices $(A_t, B_t)$ and the prediction errors on $w_t$, since an online controller cannot even stabilize the system when the predictions on $(A_t, B_t)$ can be arbitrarily bad. It is worth noting that Assumption \ref{assump:unconstrained:dynamics} requires the uniform controllability to hold for the unknown ground-truth LTV dynamics and any predicted dynamics. The goal is to ensure the perturbation bounds for KKT matrix inverse hold in \Cref{thm:diff-inverse-sophisticated}. Intuitively, this assumption is necessary because otherwise the solution of MPC (by solving FTOCP induced by the predicted dynamics) can be unbounded. We provided two examples (Example \ref{example:inverted-pendulum} and \ref{example:frequency-regulation}) that satisfy Assumption \ref{assump:unconstrained:dynamics} while the true dynamics are unknown.

\section{General Dynamical Systems}\label{sec:general}
We now move beyond unconstrained linear systems to constrained nonlinear systems given by the general online control problem \eqref{equ:online_control_problem} in Section \ref{sec:preliminaries}. All necessary assumptions are summarized in  \Cref{assump:general-dynamical-system} in \Cref{appendix:general}. Perhaps surprisingly, decaying perturbation bounds can hold even in this case.  In particular, using Theorem 4.5 in \cite{shin2021exponential}, we can show that  there exists a small constant $R$ such that, for some constants $H_3 \geq 1$ and $\lambda_3 \in (0, 1)$, perturbation bounds \eqref{equ:perturbation-bound-fix-initial} and \eqref{equ:perturbation-bound-fix-parameters} hold for $q_1(t) = 0$, $q_2(t) = H_3 \lambda_3^t$, and $q_3(t) = H_3 \lambda_3^t$.  Thus, Property \ref{assump:pipeline-perturbation-bounds} holds (see \Cref{appendix:general} for formal statements) and we can apply \Cref{thm:the-pipeline-theorem} to obtain the following dynamic regret bound:

\begin{theorem}\label{thm:perturbation:general-system}
In the general system \eqref{equ:online_control_problem}, under \Cref{assump:general-dynamical-system} in \Cref{appendix:general}, Property \ref{assump:pipeline-perturbation-bounds} holds for some positive constant $R$ and $q_1(t) = 0$, $q_2(t) = H_3 \lambda_3^t$, and $q_3(t) = H_3 \lambda_3^t$. Suppose the terminal cost $F_{t+k}$ of $\MPC_k$ is set to be the indicator function of some state $\bar{y}(\xi_{t+k\mid t})$ that satisfies $\bar{y}(\xi_{t+k\mid t}) \in \mathcal{B}(x_{t+k}^*, R)$ for $t < T - k$. Suppose the prediction errors $\rho_{t, \tau}$ are sufficiently small and the prediction horizon $k$ is sufficiently large such that
$H_3\sum_{\tau = 0}^{k-1}\lambda_3^\tau \rho_{t, \tau} + 2R H_3 \lambda_3^k \leq \frac{(1 - \lambda_3)^2 R}{H_3^2 L_g}.$
Then, the dynamic regret of $\MPC_k$ is upper bounded by
$\cost(\MPC_k) - \cost(\OPT) \leq O\left(\sqrt{T \cdot \sum_{\tau = 0}^{k-1} \lambda_3^\tau P(\tau) + \lambda_3^{2k} T^2} + \sum_{\tau = 0}^{k-1} \lambda_3^\tau P(\tau)\right).$
\end{theorem}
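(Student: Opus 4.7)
The plan is to verify that \Cref{assump:pipeline-perturbation-bounds} (Property \ref{assump:pipeline-perturbation-bounds}) holds with the claimed decay functions, and then invoke the Pipeline Theorem \ref{thm:the-pipeline-theorem} mechanically. All of the ``real work'' is thus concentrated in extracting the perturbation bounds; once they are in hand, the regret bound is an algebraic consequence.

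\textbf{Step 1 (establishing Property \ref{assump:pipeline-perturbation-bounds}).} I would appeal to Theorem 4.5 of \cite{shin2021exponential}, which, under second-order sufficient conditions, LICQ, and uniform controllability/observability along $\OPT$ (all collected into \Cref{assump:general-dynamical-system} in \Cref{appendix:general}), provides an exponentially decaying local sensitivity bound for the FTOCP: there exist constants $H_3 \geq 1$, $\lambda_3 \in (0,1)$, and a radius $R_0 > 0$ such that whenever initial conditions, terminal targets, and stage parameters lie within $R_0$ of their nominal values on $\OPT$, the resulting perturbation in any primal variable $y_t$ or $v_t$ is bounded by $H_3 \lambda_3^{|t - s|}$ times the perturbation at time $s$, summed over $s$. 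From this master bound I would read off the two specializations:
\begin{itemize}[nosep,leftmargin=.2in]
    \item Fixing the initial state and only perturbing $\xi_\tau$ yields \eqref{equ:perturbation-bound-fix-initial} with $q_1 \equiv 0$ and $q_2(\tau) = H_3 \lambda_3^\tau$ (there is no multiplicative $\norm{z}$ term because the dependence on the initial state factors out).
    \item Fixing all parameters and perturbing only the initial state yields \eqref{equ:perturbation-bound-fix-parameters} with $q_3(\tau) = H_3 \lambda_3^\tau$.
\end{itemize}
I then need to handle the two specifications demanded by Property \ref{assump:pipeline-perturbation-bounds}. For $t < T-k$, the terminal cost $F$ is the indicator $\mathbb{I}$ at a point in $\mathcal{B}(x_{t+k}^*, R)$; this fits directly because an indicator at a target state is exactly a terminal equality constraint, which Shin et al.'s framework admits. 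For $t \geq T - k$, the terminal cost is the true $F_T$; here I would treat $F_T$ as a smooth terminal cost with bounded second derivatives near $x_T^*$, as guaranteed by \Cref{assump:general-dynamical-system}, and re-derive the same form of perturbation bound. Taking $R \coloneqq \min\{R_0, \text{radii required for both specifications}\}$ yields Property \ref{assump:pipeline-perturbation-bounds}.

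\textbf{Step 2 (invoking the Pipeline Theorem).} With $C_1 = 0$, $C_2 = H_3/(1-\lambda_3)$, and $C_3 = H_3/(1-\lambda_3)$, the per-step error bound from \Cref{lemma:pipeline-step2} simplifies to
\[
e_t \;\leq\; H_3 \sum_{\tau=0}^{k} \lambda_3^\tau \rho_{t,\tau} \;+\; 2R H_3 \lambda_3^k,
\]
and the hypothesis of \Cref{thm:the-pipeline-theorem} becomes exactly the condition $H_3 \sum_\tau \lambda_3^\tau \rho_{t,\tau} + 2R H_3 \lambda_3^k \leq (1-\lambda_3)^2 R/(H_3^2 L_g)$ stated in the theorem. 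The quantity $E$ in \eqref{thm:the-pipeline-theorem:statement} reduces to $E = O\bigl(\sum_{\tau=0}^{k-1} \lambda_3^\tau P(\tau) + \lambda_3^{2k} T\bigr)$, and plugging $\cost(\OPT) = O(T)$ and $\lambda_3^{2k}T \leq \sqrt{\lambda_3^{2k}T^2}$ into $O(\sqrt{\cost(\OPT)\cdot E} + E)$ gives the bound in the theorem statement, with the $\lambda_3^{2k}T$ summand absorbed into the square root.

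\textbf{Main obstacle.} The substantive step is purely Step 1: checking that the hypotheses of \cite[Thm.~4.5]{shin2021exponential} (or a suitable variant) follow from the assumptions we impose on the FTOCP for both terminal-cost specifications, and verifying that the master bound can be cleanly decoupled into \eqref{equ:perturbation-bound-fix-initial} with $q_1 \equiv 0$. In particular, for \emph{nonlinear} dynamics one generally does not get $q_1 \equiv 0$; this holds here because the FTOCP is parameterized in a way that the $\norm{z}$-multiplying cross term in the sensitivity bound is absent (the nonlinearity is fixed while only $\xi$ is perturbed). The rest of the proof is bookkeeping: once the perturbation bounds hold, the Pipeline Theorem is a black box.
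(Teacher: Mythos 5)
Your proposal is correct and follows essentially the same route as the paper: the paper's Appendix F likewise establishes Property \ref{assump:pipeline-perturbation-bounds} with $q_1\equiv 0$, $q_2(t)=q_3(t)=H_3\lambda_3^t$ by invoking Theorem 4.5 of \cite{shin2021exponential} under Assumption \ref{assump:general-dynamical-system} (with $H_3$ and $\lambda_3$ given explicitly in terms of the uniform singular spectrum bounds), and then concludes by combining this with the Pipeline Theorem exactly as in your Step 2.
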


A complete proof of \Cref{thm:perturbation:general-system} can be found in \Cref{appendix:general}. An assumption in \Cref{thm:perturbation:general-system} that is difficult to satisfy in general is that the reference terminal states $\bar{y}(\xi_{t+k\mid t})$ of $\MPC_k$ must be close enough to the offline optimal state $x_{t+k}^*$, i.e., $\bar{y}(\xi_{t+k\mid t}) \in \mathcal{B}(x_{t+k}^*, R)$, while the offline optimal state $x_{t+k}^*$ is generally unknown. This can be achieved in some special cases, for example, when we know $\norm{\xi_t^*}$ is sufficiently small. In this case, one can first solve FTOCP $\psi_0^T\left(x_0, \mathbf{0}; F_T\right)$ and use it as a reference to set the terminal states of $\MPC_k$. This intuition is formally shown in \Cref{appendix:general}. Another limitation is that \Cref{thm:perturbation:general-system} is only a bound on the cost of $\MPC$, not its feasibility. There are many ways to guarantee recursive feasibility of $\MPC$ \cite{borrelli2017predictive}, which we leave as future work. We also discuss how to verify Assumption \ref{assump:general-dynamical-system} in two simple examples that arise from a simple inventory dynamics in Appendix \ref{appendix:inventory-control}. The first positive example shows that Assumption \ref{assump:general-dynamical-system} is not vacuous, and the second negative example shows exponentially decaying perturbation bounds may not hold when Assumption \ref{assump:general-dynamical-system} is not satisfied.

\bibliographystyle{unsrt}
\bibliography{main.bib}

\clearpage

\appendix



\section{Notation Summary}\label{appendix:notations}
In this paper, we use $\alpha_{t_1:t_2}$ ($t_2 \geq t_1$) to denote a sequence of vectors $(\alpha_{t_1}, \alpha_{t_1 + 1}, \ldots, \alpha_{t_2})$. For ease of reference, we summarize in the following table all the notations used in the paper.

\vspace*{-10pt}
\begin{table}[H]
  \centering
  \begin{tabular}{c|l}
    \specialrule{1.0pt}{0pt}{0pt}
    \textbf{Notation} & \hspace*{12.5em}\textbf{Meaning} \\\hline
    $\xi_t$ & \tabincell{l}{The uncertainty parameter of the system, used to parameterize costs,\\dynamics, and constraints.} \\\hline
    $\xi_t^*$ & The ground-truth parameter of the system, unknown to the controller. \\\hline
    $\xi_{\tau\mid t}$ & The prediction of $\xi_\tau^*$ revealed to the controller at time step $t$ ($\tau \geq t$). \\\hline
    $\varXi_t$ & \tabincell{l}{The space of uncertainty parameters. $\xi_t^*$ and $\xi_{t\mid \tau}, \tau \leq t$ are in $\varXi_t$. We \\ assume the diameter of $\varXi_t$ is less than or equal to $1$ without the loss of\\ generality, i.e., $\norm{\xi_t - \xi_t'} \leq 1$ for all $\xi_t, \xi_t' \in \varXi_t$.} \\\hline
    $k$ & \tabincell{l}{The prediction horizon. At time $t$, the controller observes predictions\\ $\xi_{t:t'\mid t}$, where $t' \coloneqq \min\{t+k, T\}$.}\\\hline
    $\rho_{t, \tau}$ & \tabincell{l}{The error of predicting the system parameter after $\tau$ steps at time $t$,\\ i.e., $\rho_{t, \tau} = \norm{\xi_{t+\tau}^* - \xi_{t+\tau\mid t}}$. We adopt the convention that  $\rho_{t, \tau} \coloneqq 0$\\ if $t+\tau > T$.} \\\hline
    $P(\tau)$ & \tabincell{l}{The total error of predicting the system parameter after $\tau$ steps (the\\power of $\tau$-step-away predictions), i.e., $P(\tau) \coloneqq \sum_{t=0}^{T-\tau} \rho_{t, \tau}^2$.} \\\hline
    $f_t(x_t, u_t; \xi_t)$ & \tabincell{l}{The stage cost of FTOCP at time step $t$, parameterized by $\xi_t \in \varXi_t$.\\The true stage cost is $f_t(x_t, u_t; \xi_t^*)$.} \\\hline
    $g_t(x_t, u_t; \xi_t)$ & \tabincell{l}{The dynamical function at time step $t$, parameterized by $\xi_t \in \varXi_t$.\\The true dynamics is $x_{t+1} = g_t(x_t, u_t; \xi_t^*)$.} \\\hline
    $s_t(x_t, u_t; \xi_t)$ & \tabincell{l}{The constraint function at time step $t$, parameterized by $\xi_t \in \varXi_t$.\\The true constraint is $s_t(x_t, u_t; \xi_t^*) \leq 0$.} \\\hline
    $F_T$ and $\{F_{t+k}\}_{t=0}^{T-k-1}$ & \tabincell{l}{$F_T$ is the true terminal cost function defined by the original online\\ control problem \eqref{equ:online_control_problem}, while $F_{t+k}$ for $t < T - k$ is the terminal cost\\ function used by $\MPC_k$ at time $t$.} \\\hline
    $\iota_{t_1}^{t_2}(z,\xi_{t_1:t_2-1}, \zeta_{t_2}; F)$ & \tabincell{l}{The FTOCP defined on the time interval $[t_1, t_2]$, where $z$ is the initial\\
    state at time $t_1$, and $F$ is some terminal cost function at time $t_2$.\\ $\xi_{t_1:t_2-1}$ are the parameters for the cost, dynamics, and constraints at\\ time $[t_1, t_2-1]$, while
    $\zeta_{t_2}$ is the parameter for the terminal cost $F$.} \\\hline
    $\psi_{t_1}^{t_2}(z,\xi_{t_1:t_2-1}, \zeta_{t_2}; F)$ & \tabincell{l}{An optimal solution to the FTOCP $\iota_{t_1}^{t_2}(z,\xi_{t_1:t_2-1}, \zeta_{t_2}; F)$. The entries\\ are indexed by $y_{t_1:t_2}$ (for states) and $v_{t_1:t_2-1}$ (for actions).}\\\hline
    $\psi_{t_1}^{t_2}(z,\xi_{t_1:t_2}; F)$ & The shorthand notation of $\psi_{t_1}^{t_2}(z,\xi_{t_1:t_2-1}, \xi_{t_2}; F)$. \\
    \specialrule{1.0pt}{0pt}{0pt}
  \end{tabular}
\end{table}

\section{Assumptions Overview}\label{appendix:assumptions}
In this section, we give a more detailed overview of the assumptions that the online control problem \eqref{equ:online_control_problem} should satisfy in general so that our pipeline in Section \ref{sec:pipeline:flowchart} works. Specific assumptions in each specific setting will be presented separately in Assumption \ref{assump:unconstrained-LTV-pred-err-disturbance}, \ref{assump:unconstrained-LTV-pred-err-dynamics}, and \ref{assump:general-dynamical-system}.

\textbf{Cost functions.} In general, we require the stage cost functions $f_t$ and the terminal cost $F_T$ to be \textit{well-conditioned}, which includes non-negativity, strong convexity, smoothness (Lipschitz continuous gradient), and twice continuous differentiability. Note that these assumptions are equivalent to bounded Hessian ($\mu I \preceq \nabla^2 f_t \preceq \ell I$) and non-negative minimizer of the cost functions. Specifically, for quadratic costs $\nabla^2 f_t$ are constant, and the assumptions are further equivalent to bounded spectra of the cost matrices.


\textbf{Dynamical systems.} A basic requirement of the dynamical function $g_t$ is \textit{Lipschitzness} in $u_t$, i.e.,
\[\norm{g_t(x_t, u_t; \xi_t^*) - g_t(x_t, u_t'; \xi_t^*)} \leq L_g \norm{u_t - u_t'}.\]
We point out that only Lipschitzness in control action $u_t$ is needed for the Pipeline Theorem to hold, which guarantees that an error on a control action $u_t$ has a bounded impact on the next state $x_{t+1}$.

A more non-trivial assumption on dynamics is that the dynamical system should be \textit{(uniformly) controllable}. Intuitively, this means the online controller should be able to steer the system to some target state in a finite number of time steps with some bounded control actions. 

\begin{definition}[uniform controllability]\label{def:controllability}
Consider a general dynamics $x_{t+1} = g_t(x_t, u_t; \xi_t)$. For any time steps $t_2 \geq t_1$ and fixed $(x_t, u_t)$, define $A_t := \nabla_{x_t}^{\top} g_t(x_t, u_t; \xi_t)$ and $B_t := \nabla_{u_t}^{\top} g_t(x_t, u_t; \xi_t)$, and we further define \textbf{transition matrix} $\varPhi(t_2, t_1) \in \mathbb{R}^{n \times n}$ at $(x_t, u_t)$ as
\[\varPhi(t_2, t_1) := \begin{cases}
    A_{t_2 - 1} A_{t_2 - 2} \cdots A_{t_1} & \text{ if }t_2 > t_1,\\
    I & \text{ otherwise.}
\end{cases}.\]
For any time $t$ and time interval $p \geq 0$, define \textbf{controllability matrix} $M(t, p; x_{t:t+p}, u_{t:t+p}) \in \mathbb{R}^{n \times (mp)}$ as
\[M(t, p; x_{t:t+p}, u_{t:t+p}) := \left[\varPhi(t+p, t+1) B_t, \varPhi(t+p, t+2) B_{t+1}, \ldots, \varPhi(t+p, t+p) B_{t+p}\right].\]
We say the system is \textbf{controllable} if there exists a positive integer $d$, such that the controllability matrix $M(t, d; x_t, u_t)$ is of full row rank for any $t$ and any $(x_t, u_t)$. The smallest such constant $d$ is called the \textbf{controllability index} of the system. Further, we say the system is \textbf{$\bm{\sigma}$-uniformly controllable} if exists a positive constant $\sigma$ such that $\sigma_{\min} \left( M(t, d) \right) \geq \sigma$ holds for all $t = 0, \ldots, T - d$.
\end{definition}

The definition has a clear control-theoretic interpretation for linear dynamics (where $A_t$ and $B_t$ are independent of $(x_t, u_t)$), but might seem trickier for non-linear dynamics (where $A_t$ and $B_t$ are functions of $(x_t, u_t)$). For the latter case, uniform controllability may be assumed for the offline optimal trajectory only, or for state-action pairs in a small neighborhood around it.

\textbf{Constraints.} Recursive feasibility is a well-known challenge for the design of online controllers in constrained systems \cite{borrelli2017predictive}: at some time $t$, the controller may encounter an absence of feasible trajectories to continue from the current state $x_t$.
Many solutions have been proposed for different controllers in a variety of systems.
Since the purpose of this work is to establish dynamic regret guarantees for an online controller, and for the purpose of this paper, we would expect that there is a solution, potentially via a combination of proper controller design (e.g., setting the terminal cost/constraint of MPC) and some additional assumptions on the system (e.g., the SSOSC, strong second-order sufficient conditions, and LICQ, linear independent constraint qualification, which will be introduced in Section \ref{sec:general}), so that we could focus on the sub-optimality of the online controller against the offline optimal trajectory. 

We also need to point out that, although the additional assumptions on system that involve constraints might seem tricky, sometimes they are exactly the implications of previous assumptions on costs and dynamics that is actually needed in the proof. For example, Lemma 12 in \cite{shin2021controllability} shows that Lipshitzness of dynamics and uniform controllability together imply uniform LICQ property of the system. For the clarity of exposition, these implications might be directly assumed in place of the low-level ones.

\textbf{Parameter $\bm{\xi_t}$.} 
In general, we require that all predicted quantities, which might include cost functions, dynamical functions, and constraints, should be \textit{Lipschitz} in $\xi_t$, so that these quantities get closer to their ground truth value in a linearly-bounded way as the prediction error on the parameter $\xi_t^*$ decreases. For a specific example of parameterized linear dynamics $x_{t+1} = A_t(\xi_t) x_t + B_t(\xi_t) u_t + w_t(\xi_t)$, the requirement is realized by assuming Lipschitzness of $A_t(\cdot), B_t(\cdot), w_t(\cdot)$ in $\xi_t$.

\textbf{Offline optimal trajectory.} We require the offline optimal trajectory $\OPT$ to be \textit{stable}; i.e., there exists a constant $D_{x^*}$ such that $\norm{x_t^*} \leq D_{x^*}$ for any state $x_t^*$ visited by $\OPT$. While this can be shown under some assumptions in unconstrained LTV systems (see \cite{lin2021perturbation}), we introduce this assumption to simplify and unify the presentation for more complex systems.

\section{Proof of Lemma \ref{lemma:pipeline-step2}}\label{appendix:lemma-pipeline-step2}
We have already shown \eqref{lemma:pipeline-step2:conclusion} holds for all time step $t < T - k$ in the main body. For $t \geq T - k$, we see that
\begin{subequations}\label{lemma:pipeline-step2:e1}
\begin{align}
    e_t &= \norm{\psi_t^T\left(x_t, \xi_{t:T\mid t}; F_T\right) - \psi_t^T\left(x_t, \xi_{t:T}^*; F_T\right)}\label{lemma:pipeline-step2:e1:s1}\\
    &\leq \sum_{\tau = 0}^{k} \big( \norm{x_t} \cdot q_1(\tau) + q_2(\tau) \big) \rho_{t, \tau} \label{lemma:pipeline-step2:e1:s2}\\
    &\leq \sum_{\tau = 0}^{k} \big( \left(\frac{R}{C_3} + D_{x^*}\right) \cdot q_1(\tau) + q_2(\tau) \big) \rho_{t, \tau}, \label{lemma:pipeline-step2:e1:s3}
\end{align}
\end{subequations}
where we used the definition of per-step error $e_t$ in \eqref{lemma:pipeline-step2:e1:s1}; we used the perturbation bound \eqref{equ:perturbation-bound-fix-initial} specified by Property \ref{assump:pipeline-perturbation-bounds} in \eqref{lemma:pipeline-step2:e1:s2}; we used the assumption $x_t \in \mathcal{B}\left(x_t^*, \frac{R}{C_3}\right)$, $\norm{x_t^*} \leq D_{x^*}$, and the convention $\rho_{t, \tau} \coloneqq 0$ if $t + \tau > T$ in \eqref{lemma:pipeline-step2:e1:s3}. Thus $e_t$ also satisfies \eqref{lemma:pipeline-step2:conclusion} for $t \geq T - k$.

\section{Proof of Lemma \ref{thm:per-step-error-to-performance-guarantee}}\label{appendix:per-step-error-to-performance-guarantee}
To simplify the notation, we will use $\psi_t^T(z)$ as a shorthand notation of  $\psi_t^T(z, \xi_{t:T}^*; F_T)$ in the proof of \Cref{thm:per-step-error-to-performance-guarantee}, since the proof only relies on the perturbation bound \eqref{equ:perturbation-bound-fix-parameters}.

Note that for any time step $t+1$, by Lipschitzness of the dynamics we have
\begin{align}\label{thm:oracle-perf-bound:e0}
    \norm{x_{t+1} - \psi_t^T\left(x_t\right)_{y_{t+1}}} &= \norm{g_t(x_t, u_t, w_t) - g_t\left(x_t, \psi_t^T(x_t)_{v_t}, w_t\right)}\nonumber\\
    &\leq L_g \norm{u_t - \psi_t^T(x_t)_{v_t}}\nonumber\\
    &\leq L_g e_t.
\end{align}

Therefore, we can show the statement that $x_t \in \mathcal{B}\left(x_t^*, \frac{R}{C_3}\right)$ holds if $e_\tau \leq R/(C_3^2 L_g), \forall \tau < t$ by induction. Note that this statement clearly holds for $t = 0$ since $x_0^* = x_0$. Suppose it holds for $0, 1, \ldots, t-1$. Then, we see that
\begin{subequations}\label{thm:oracle-perf-bound:e1}
\begin{align}
    \norm{x_t - x_t^*}
    ={}& \norm{x_t - \psi_0^T(x_0)_{y_t}}\nonumber{}\\
    \leq{}& \norm{x_t - \psi_{t-1}^{T}(x_{t-1})_{y_{t}}} + \sum_{i=1}^{t-1} \norm{\psi_{t-i}^{T}(x_{t-i})_{y_t} - \psi_{t-i-1}^{T}(x_{t-i-1})_{y_{t}}}\nonumber\\
    \leq{}& \norm{x_t - \psi_{t-1}^{T}(x_{t-1})_{y_{t}}} + \sum_{i = 1}^{t-1} q_3(i) \norm{x_{t-i} - \psi_{t-i-1}^{T}(x_{t-i-1})_{y_{t-i}}}\label{thm:oracle-perf-bound:e1:s1}\\
    \leq{}& \sum_{i = 0}^{t-1} q_3(i) \norm{x_{t-i} - \psi_{t-i-1}^{T}(x_{t-i-1})_{y_{t-i}}}\label{thm:oracle-perf-bound:e1:s2}\\
    \leq{}& L_g \sum_{i=0}^{t-1} q_3(i) e_{t-i-1},\label{thm:oracle-perf-bound:e1:s3}
\end{align}
\end{subequations}
where in \eqref{thm:oracle-perf-bound:e1:s1}, we apply the perturbation bound \eqref{equ:perturbation-bound-fix-parameters} specified by Property \ref{assump:pipeline-perturbation-bounds}. To see why it can be applied, note that for $i \in [1, t-1]$, $x_{t-i-1}$ satisfies $x_{t-i-1} \in \mathcal{B}\left(x_{t-i-1}^*, \frac{R}{C_3}\right)$ by the induction assumption, thus we have $\psi_{t-i-1}^T(x_{t-i-1})_{y_{t-i}} \in \mathcal{B}\left(x_{t-i}^*, R\right)$ because $q_3(1) \leq \sum_{\tau=0}^\infty q_3(\tau) \leq C_3$. Therefore, we can apply the perturbation bound \eqref{equ:perturbation-bound-fix-parameters} specified by Property \ref{assump:pipeline-perturbation-bounds} to compare the optimization solution vectors $\psi_{t-i}^T(x_{t-i})$ and $\psi_{t-i}^T\left(\psi_{t-i-1}^T(x_{t-i-1})_{y_{t-i}}\right),$ and by the principle of optimality, we see that
\[\psi_{t-i}^T\left(\psi_{t-i-1}^T(x_{t-i-1})_{y_{t-i}}\right)_{y_t} = \psi_{t-i-1}^{T}(x_{t-i-1})_{y_{t}}.\]
We also used $q_3(0) \geq 1$ in \eqref{thm:oracle-perf-bound:e1:s2} and \eqref{thm:oracle-perf-bound:e0} in \eqref{thm:oracle-perf-bound:e1:s3}. Recall that we assume $e_{t-i} \leq \frac{R}{C_3^2 L_g}$. Substituting this into \eqref{thm:oracle-perf-bound:e1} gives that
\[\norm{x_t - x_t^*} \leq L_g \cdot \frac{R}{C_3^2 L_g} \sum_{i=0}^{t-1} q_3(i) \leq \frac{R}{C_3}.\]
Hence we have shown $x_t \in \mathcal{B}\left(x_t^*, \frac{R}{C_3}\right)$ holds if $e_\tau \leq R/(C_3^2 L_g), \forall \tau < t$ by induction. An implication of this result is that $x_t \in \mathcal{B}\left(x_t^*, \frac{R}{C_3}\right)$ holds for all $t \leq T$ if $e_t \leq R/(C_3^2 L_g)$ holds for all $t < T$.

Similar with \eqref{thm:oracle-perf-bound:e1}, we see the following inequality holds for all $t < T$ if $e_t \leq R/(C_3^2 L_g), \forall t < T$:
\begin{align}\label{thm:oracle-perf-bound:e2}
    \norm{u_t - u_t^*}
    ={}& \norm{u_t - \psi_0^T(x_0)_{v_t}}\nonumber\\
    \leq{}& \norm{u_t - \psi_{t}^{T}(x_{t})_{v_{t}}} + \sum_{i=0}^{t-1} \norm{\psi_{t-i}^{T}(x_{t-i})_{v_t} - \psi_{t-i-1}^{T}(x_{t-i-1})_{v_{t}}}\nonumber\\
    \leq{}& \norm{u_t - \psi_{t}^{T}(x_{t})_{v_{t}}} + \sum_{i = 0}^{t-1} q_3(i) \norm{x_{t-i} - \psi_{t-i-1}^{T}(x_{t-i-1})_{y_{t-i}}}\nonumber\\
    \leq{}& e_t + L_g \sum_{i=0}^{t-1} q_3(i) e_{t-i-1},
\end{align}
where the second inequality holds for the same reason as \eqref{thm:oracle-perf-bound:e1:s1}.

By \eqref{thm:oracle-perf-bound:e1}, we see that
\begin{subequations}\label{thm:oracle-perf-bound:e3}
\begin{align}
    \norm{x_t - x_t^*}^2 &\leq L_g^2 \left(\sum_{i=0}^{t-1} q_3(i) e_{t-i-1}\right)^2 \nonumber\\
    &\leq L_g^2 \left(\sum_{i=0}^{t-1} q_3(i)\right)\cdot \left(\sum_{i=0}^{t-1} q_3(i) e_{t-i-1}^2\right) \label{thm:oracle-perf-bound:e3:s1}\\
    &\leq C_3 L_g^2 \left(\sum_{i=0}^{t-1} q_3(i) e_{t-i-1}^2\right), \label{thm:oracle-perf-bound:e3:s2}
\end{align}
\end{subequations}
where we use the Cauchy-Schwarz inequality in \eqref{thm:oracle-perf-bound:e3:s1}, and $\sum_{i=0}^{t-1} q_3(i) \leq C_3$ in \eqref{thm:oracle-perf-bound:e3:s2}.

Similarly, by \eqref{thm:oracle-perf-bound:e2}, we see that
\begin{subequations}\label{thm:oracle-perf-bound:e4}
\begin{align}
    \norm{u_t - u_t^*}^2 &\leq \left(e_t + L_g \sum_{i=0}^{t-1} q_3(i) e_{t-i-1}\right)^2\nonumber\\
    &\leq \left(1 + L_g^2 \sum_{i=0}^{t-1} q_3(i)\right)\cdot \left(e_t^2 + \sum_{i=0}^{t-1} q_3(i) e_{t-i-1}^2\right)\label{thm:oracle-perf-bound:e4:s1}\\
    &\leq \left(1 + C_3 L_g^2\right)\cdot \left(e_t^2 + \sum_{i=0}^{t-1} q_3(i) e_{t-i-1}^2\right),\label{thm:oracle-perf-bound:e4:s2}
\end{align}
\end{subequations}
where we use the Cauchy-Schwarz inequality in \eqref{thm:oracle-perf-bound:e4:s1}, and we use $\sum_{i=0}^{t-1} q_3(i) \leq C_3$ in \eqref{thm:oracle-perf-bound:e4:s2}.

Summing \eqref{thm:oracle-perf-bound:e3} and \eqref{thm:oracle-perf-bound:e4} over time steps $t$ gives that
\begin{align}\label{thm:oracle-perf-bound:e5}
    &\sum_{t=1}^T \norm{x_t - x_t^*}^2 + \sum_{t=0}^{T-1} \norm{u_t - u_t^*}^2\nonumber\\
    \leq{}& C_3 L_g^2 \sum_{t=1}^T \left(\sum_{i=0}^{t-1} q_3(i) e_{t-i-1}^2\right) + \left(1 + C_3 L_g^2\right)\cdot \sum_{t=0}^{T-1} \left(e_t^2 + \sum_{i=0}^{t-1} q_3(i) e_{t-i-1}^2\right)\nonumber\\
    \leq{}& \left(1 + 2 C_3 L_g^2\right)\cdot \left(1 + C_3\right)\cdot \sum_{t=0}^{T-1} e_t^2,
\end{align}
where we rearrange the terms and use $\sum_{j=0}^\infty q_3(j) \leq C_3$ in the last inequality.

Since the cost function $f_t(\cdot, \cdot; \xi_t^*)$ and $F_T(\cdot; \xi_T^*)$ are nonnegative, convex, and $\ell$-smooth in their inputs, by Lemma F.2 in \cite{lin2021perturbation}, we see that the following inequality holds for arbitrary $\eta > 0$:
\begin{subequations}\label{thm:oracle-perf-bound:e6}
\begin{align}
    &\cost(\ALG) - \cost(\OPT)\nonumber\\
    \leq{}&\left(\sum_{t=0}^{T-1} f_t(x_t, u_t; \xi_t^*) + F_T(x_T; \xi_T^*)\right) - \left(\sum_{t=0}^{T-1} f_t(x_t^*, u_t^*; \xi_t^*) + F_T(x_T^*; \xi_T^*)\right)\nonumber\\
    \leq{}& \eta \left(\sum_{t=0}^{T-1} f_t(x_t^*, u_t^*; \xi_t^*) + F_T(x_T^*; \xi_T^*)\right)\nonumber\\
    &+ \frac{\ell}{2}\left(1 + \frac{1}{\eta}\right)\left(\sum_{t=1}^T \norm{x_t - x_t^*}^2 + \sum_{t=0}^{T-1} \norm{u_t - u_t^*}^2\right)\label{thm:oracle-perf-bound:e6:s0}\\
    \leq{}& \eta \cdot \cost(\OPT) + \left(1 + \frac{1}{\eta}\right)\cdot \frac{\ell}{2} \cdot \left(1 + 2 C_3 L_g^2\right)\cdot \left(1 + C_3\right)\cdot \sum_{t=0}^{T-1} e_t^2 \label{thm:oracle-perf-bound:e6:s1}\\
    ={}& \eta \cdot \cost(\OPT) + \frac{1}{\eta}\cdot \frac{\ell}{2} \cdot \left(1 + 2 C_3 L_g^2\right)\cdot \left(1 + C_3\right)\cdot \sum_{t=0}^{T-1} e_t^2\nonumber\\
    &+ \frac{\ell}{2} \cdot \left(1 + 2 C_3 L_g^2\right)\cdot \left(1 + C_3\right)\cdot \sum_{t=0}^{T-1} e_t^2, \label{thm:oracle-perf-bound:e6:s2}
\end{align}
\end{subequations}
where we apply Lemma F.2 in \cite{lin2021perturbation} in \eqref{thm:oracle-perf-bound:e6:s0}, and we use \eqref{thm:oracle-perf-bound:e5} in \eqref{thm:oracle-perf-bound:e6:s1}. Setting the tunable weight $\eta$ in \eqref{thm:oracle-perf-bound:e6:s2} to be
\[\eta = \left(\frac{\frac{\ell}{2} \cdot \left(1 + 2 C_3 L_g^2\right)\cdot \left(1 + C_3\right)\cdot \sum_{t=0}^{T-1} e_t^2}{\cost(\OPT)}\right)^{\frac{1}{2}}\]
gives that
\begin{align}\label{thm:oracle-perf-bound:final-conclusion}
    &\cost(\ALG) - \cost(\OPT)\nonumber\\
    \leq{}& \sqrt{\left(\frac{\ell}{2} \cdot \left(1 + 2 C_3 L_g^2\right)\cdot \left(1 + C_3\right)\right) \cdot \cost(\OPT) \cdot \sum_{t=0}^{T-1} e_t^2}\nonumber\\
    &+ \frac{\ell}{2} \cdot \left(1 + 2 C_3 L_g^2\right)\cdot \left(1 + C_3\right)\cdot \sum_{t=0}^{T-1} e_t^2.
\end{align}
This finishes the proof of \Cref{thm:per-step-error-to-performance-guarantee}.

\section{Proof of Theorem \ref{thm:the-pipeline-theorem}}\label{appendix:the-pipeline-theorem}
We first use induction to show that the following two conditions holds for all time steps $t < T$:
\begin{subequations}\label{thm:the-pipeline-theorem:e1}
\begin{align}
    x_t &\in \mathcal{B}\left(x_t^*, \frac{R}{C_3}\right),\label{thm:the-pipeline-theorem:e1:s1}\\
    e_t &\leq \sum_{\tau = 0}^{k}\left(\left(\frac{R}{C_3} + D_{x^*}\right) \cdot q_1(\tau) + q_2(\tau)\right)\rho_{t, \tau} + 2R\left(\left(\frac{R}{C_3} + D_{x^*}\right) \cdot q_1(k) + q_2(k)\right). \label{thm:the-pipeline-theorem:e1:s2}
\end{align}
\end{subequations}
At time step $0$, \eqref{thm:the-pipeline-theorem:e1:s1} holds because $x_0 = x_0^*$, and \eqref{thm:the-pipeline-theorem:e1:s2} holds by \Cref{lemma:pipeline-step2} and the assumption on the terminal cost $F_k$ of $\MPC_k$.

Suppose \eqref{thm:the-pipeline-theorem:e1:s1} and \eqref{thm:the-pipeline-theorem:e1:s2} hold for all time steps $\tau < t$. For time step $t$, by the assumption on the prediction errors $\rho_{t, \tau}$ and prediction horizon $k$ in \Cref{thm:the-pipeline-theorem}, we know that $e_{\tau} \leq \frac{R}{C_3^2 L_g}$ holds for all $\tau < t$ because \eqref{thm:the-pipeline-theorem:e1:s2} holds for all $\tau < t$. Thus, we know that \eqref{thm:the-pipeline-theorem:e1:s1} holds for time step $t$ by \Cref{thm:per-step-error-to-performance-guarantee}. Then, since \eqref{thm:the-pipeline-theorem:e1:s1} holds for time step $t$, and the terminal cost $F_{t+k}$ of $\MPC_k$ is set to be the indicator function of some state $\bar{y}(\xi_{t+k\mid t})$ that satisfies $\bar{y}(\xi_{t+k\mid t}) \in \mathcal{B}(x_{t+k}^*, R)$ if $t < T-k$, we know \eqref{thm:the-pipeline-theorem:e1:s2} also holds for time step $t$ by \Cref{lemma:pipeline-step2}. This finishes the induction proof of \eqref{thm:the-pipeline-theorem:e1}.

To simplify the notation, let $R_0 \coloneqq \frac{R}{C_3} + D_{x^*}$. Note that \eqref{thm:the-pipeline-theorem:e1:s2} implies that
\begin{subequations}\label{thm:the-pipeline-theorem:e2}
\begin{align}
    e_t^2 \leq{}& \left(\sum_{\tau = 0}^{k}\left(R_0 \cdot q_1(\tau) + q_2(\tau)\right) + 2R\left(R_0 + 1\right)\right)\nonumber\\
    &\cdot \left(\sum_{\tau = 0}^{k}\left(R_0 \cdot q_1(\tau) + q_2(\tau)\right)\rho_{t, \tau}^2 + 2R\left(R_0 \cdot q_1(k)^2 + q_2(k)^2\right)\right) \label{thm:the-pipeline-theorem:e2:s1}\\
    \leq{}& \left(R_0 C_1 + C_2 + 2R(R_0 + 1)\right)\nonumber\\
    &\cdot \left(\sum_{\tau = 0}^{k-1}\left(R_0 \cdot q_1(\tau) + q_2(\tau)\right)\rho_{t, \tau}^2 + (2R+1)\left(R_0 \cdot q_1(k)^2 + q_2(k)^2\right)\right), \label{thm:the-pipeline-theorem:e2:s2}
\end{align}
\end{subequations}
where we use the Cauchy-Schwarz inequality in \eqref{thm:the-pipeline-theorem:e2:s1}; we use the bounds $\sum_{\tau = 0}^k q_1(\tau) \leq C_1$, $\sum_{\tau = 0}^k q_2(\tau) \leq C_2$, and $\rho_{t, \tau} \leq 1$ in \eqref{thm:the-pipeline-theorem:e2:s2}.

Since \eqref{thm:the-pipeline-theorem:e1} and \eqref{thm:the-pipeline-theorem:e2} holds for all time steps $t < T$, we can apply \Cref{thm:per-step-error-to-performance-guarantee} to obtain that
\begin{equation*}
    \cost(\MPC_k) - \cost(\OPT) \leq \sqrt{\cost(\OPT) \cdot E_0} + E_0,
\end{equation*}
where
\begin{align*}
    E_0 \coloneqq{}& \left(R_0 C_1 + C_2 + 2R(R_0 + 1)\right)\nonumber\\
    &\cdot \left(\sum_{\tau = 0}^{k-1}\left(R_0 \cdot q_1(\tau) + q_2(\tau)\right)P(\tau) + (2R+1)\left(R_0 \cdot q_1(k)^2 + q_2(k)^2\right)T\right).
\end{align*}
This finishes the proof of \Cref{thm:the-pipeline-theorem}.

\section{Assumptions and Proofs of Section \ref{sec:unconstrained:disturbances}}\label{appendix:unconstrained-LTV-disturbances}
The formal definition of the controllability index $d$ and $\sigma$-uniform controllable are given in \cite{lin2021perturbation}. For completeness, we restate them for LTV dynamics in Assumption \ref{assump:unconstrained-LTV-pred-err-disturbance} below.

\begin{assumption}\label{assump:unconstrained-LTV-pred-err-disturbance}
For time steps $t_2 \geq t_1$, we define the transition matrix $\Phi(t_2, t_1) \in \mathbb{R}^{n \times n}$ as
    \[\Phi(t_2, t_1) \coloneqq \begin{cases}
        A_{t_2 - 1} A_{t_2 - 2} \cdots A_{t_1} & \text{ if }t_2 > t_1\\
        I & \text{ otherwise.}
    \end{cases},\]
For any positive integer $p$, we define the controllability matrix $M(t, p) \in \mathbb{R}^{n \times (mp)}$ as
    \[M(t, p) \coloneqq \left[\Phi(t+p, t+1) B_t, \Phi(t+p, t+2) B_{t+1}, \ldots, \Phi(t+p, t+p) B_{t+p}\right].\]
We assume the LTV system $\{A_t, B_t\}$ is $\sigma$-uniform controllable with controllability index $d$, i.e., $d$ is the smallest positive integer such that $\sigma_{min}\left(M(t, d)\right) > 0$ holds for all $t \in [0, T - d]$, and $\sigma_{min}\left(M(t, d)\right) \geq \sigma$ holds for all $t \in [0, T - d]$.
\end{assumption}

As a remark, the Assumption \ref{assump:unconstrained-LTV-pred-err-disturbance} is a special case of \Cref{def:controllability} in unconstrained LTV systems. \cite{lin2021perturbation} has established a perturbation bound for the LTV system in \eqref{equ:online_control_problem:unconstrained-LTV-disturbance} which implies the our requirements in Property \ref{assump:pipeline-perturbation-bounds}. Thus we can use \Cref{thm:the-pipeline-theorem} to show \Cref{thm:perturbation:unconstrained-LTV-pred-err-disturbance}.

\begin{proof}[Proof of \Cref{thm:perturbation:unconstrained-LTV-pred-err-disturbance}]
By Theorem 3.3 in \cite{lin2021perturbation}, we know Property \ref{assump:pipeline-perturbation-bounds} holds under Assumption \ref{assump:unconstrained:disturbances} for arbitrary $R$ and $q_1(t) = 0, q_2(t) = H_1 \lambda_1^t$, and $q_3(t) = H_1 \lambda_1^t$, where $H_1 = H_1(\mu, \ell, d, \sigma, a, b, b', L_w) > 0$ is some constant, and $\lambda_1 = \lambda_1(\mu, \ell, d, \sigma, a, b, b', L_w) \in (0, 1)$ is the decay rate. Here, $H_1$ corresponds to $C$ and $\lambda_1$ corresponds to $\lambda$ in Theorem 3.3 in \cite{lin2021perturbation}.

By setting $R \coloneqq \max\left\{ D_{x^*}, \frac{2L_g H_1^3}{(1 - \lambda_1)^3} \right\}$, we guarantee that the terminal state $0$ of $\MPC_k$ is always in the closed ball $\mathcal{B}(x_{t+k}^*, R)$, and the condition
\[\sum_{\tau = 0}^{k}\left(\left(\frac{R}{C_3} + D_{x^*}\right) \cdot q_1(\tau) + q_2(\tau)\right)\rho_{t, \tau} + 2R\left(\left(\frac{R}{C_3} + D_{x^*}\right) \cdot q_1(k) + q_2(k)\right) \leq \frac{R}{C_3^2 L_g}\]
holds once $k \geq \ln\left(\frac{4 H_1^3 L_g}{(1 - \lambda_1)^2}\right)/\ln(1/\lambda_1)$ because $\rho_{t, \tau} \leq 1$. Therefore, we can apply \Cref{thm:the-pipeline-theorem} to finish the proof of \Cref{thm:perturbation:unconstrained-LTV-pred-err-disturbance}.
\end{proof}

\section{Assumptions and Proofs of Section \ref{sec:unconstrained:dynamics}}\label{appendix:unconstrained-LTV-dynamics}
In this section, we give the detailed assumptions and proofs of the results in Section \ref{sec:unconstrained:dynamics}. Before we present the assumption on the uncertain LTV systems in \eqref{equ:online_control_problem:unconstrained-LTV-dynamics}, we first define several quantities that we will use heavily in the rest of this section:

For time steps $t_1 \leq t_2$ and $\xi_{t_1:t_2} \in \varXi_{t_1:t_2}$, define
\begin{align}\label{equ:large-control-matrix-with-terminal}
    N_{t_1}^{t_2}(\xi_{t_1:t_2}) &\coloneqq \begin{bmatrix}
    I & & & & \\
    - A_{t_1}(\xi_{t_1}) & - B_{t_1}(\xi_{t_1}) & I & & \\
     & & \ddots & & & & \\
     & & - A_{t_2}(\xi_{t_2}) & - B_{t_2}(\xi_{t_2}) & I
    \end{bmatrix}.
\end{align}
This matrix is closely related to the stability of the LTV system in the time interval $[t_1, t_2 + 1]$. To see this, note that $N_{t_1}^{t_2}(\xi_{t_1:t_2})$ always has full row rank, i.e., given any disturbance vector $w = \left(x_{t_1}, w_{t_1}, w_{t_1 + 1}, \ldots, w_{t_2}\right)^\top$, one can always find a feasible sub-trajectory $z = \left(x_{t_1}, u_{t_1}, x_{t_1 + 1}, \ldots, u_{t_2}, x_{t_2 + 1}\right)^\top$ that satisfies $N_{t_1}^{t_2}(\xi_{t_1:t_2}) z = w$. If for any vector $w$, there exists a feasible sub-trajectory $z$ such that $\norm{z} \leq (1/\sigma) \cdot \norm{w}$ for some positive constant $\sigma$, then the smallest singular value of $N_{t_1}^{t_2}(\xi_{t_1:t_2})$ is lower bounded by $\sigma$.

Similar with \eqref{equ:large-control-matrix-with-terminal}, we define matrix
\begin{align}\label{equ:large-control-matrix-without-terminal}
    \hat{N}_{t_1}^{t_2}(\xi_{t_1:t_2}) &\coloneqq \begin{bmatrix}
    I & & & \\
    - A_{t_1}(\xi_{t_1}) & - B_{t_1}(\xi_{t_1}) & I & \\
     & & \ddots & & & \\
     & & - A_{t_2}(\xi_{t_2}) & - B_{t_2}(\xi_{t_2})
    \end{bmatrix}
\end{align}
for any time steps $t_1 \leq t_2$ and $\xi_{t_1:t_2} \in \varXi_{t_1:t_2}$, which removes the last column of \eqref{equ:large-control-matrix-with-terminal}. The matrix $\hat{N}_{t_1}^{t_2}(\xi_{t_1:t_2})$ is closely related to the controllability of the LTV system in the time interval $[t_1, t_2 + 1]$. To see this, given any disturbance vector $\hat{w} = \left(x_{t_1}, w_{t_1}, w_{t_1 + 1}, \ldots, w_{t_2} - x_{t_2+1}\right)^\top$ whose first/last entry depends on the initial/terminal state, a feasible sub-trajectory $\hat{z} = \left(x_{t_1}, u_{t_1}, x_{t_1 + 1}, \ldots, u_{t_2}\right)^\top$ must satisfy that $\hat{N}_{t_1}^{t_2}(\xi_{t_1:t_2}) \hat{z} = \hat{w}$. Different from $N_{t_1}^{t_2}(\xi_{t_1:t_2})$, $\hat{N}_{t_1}^{t_2}(\xi_{t_1:t_2})$ is not guaranteed to have full row rank. If for any vector $\hat{w}$, there exists a feasible sub-trajectory $\hat{z}$ such that $\norm{\hat{z}} \leq (1/\sigma)\cdot \norm{\hat{w}}$ for some positive constant $\sigma$, then the smallest singular value of $\hat{N}_{t_1}^{t_2}(\xi_{t_1:t_2})$ is lower bounded by $\sigma$.

We make the following assumption on the smallest singular values of matrices $N_{t_1}^{t_2}(\xi_{t_1:t_2})$ and $\hat{N}_{t_1}^{t_2}(\xi_{t_1:t_2})$ so that the LTV system possesses uniform stability and controllability properties under any uncertainty parameters:

\begin{assumption}\label{assump:unconstrained-LTV-pred-err-dynamics}
There exists some universal constant $\sigma > 0$ such that $\sigma_{min}\left(N_{t}^{T-1}(\xi_{t:T-1})\right) \geq \sigma$ for any $t < T$, and $\sigma_{min}\left(\hat{N}_{t}^{t + k}\left(\xi_{t:t+k}\right)\right) \geq \sigma$ for any $t < T - k$.
\end{assumption}

While Assumption \ref{assump:unconstrained-LTV-pred-err-dynamics} may seem more restricted than the uniform controllability defined in \Cref{def:controllability}, it can actually be derived from \Cref{def:controllability} by Lemma 12 in \cite{shin2021controllability}.

In order to formulate \eqref{equ:online_control_problem:unconstrained-LTV-dynamics} as a quadratic programming problem with equality constraints, we also need to define the matrix for cost functions:
\begin{align}\label{equ:large-control-matrix-costs}
    M_{t}^{T}(\xi_{t:T}) &\coloneqq \diag\left(Q_{t}(\xi_{t}), R_{t}(\xi_{t}), Q_{t+1}(\xi_{t+1}), \ldots, R_{T-1}(\xi_{T-1}), P_T(\xi_T)\right), \forall t < T,\nonumber\\
    \hat{M}_{t}^{t+k}(\xi_{t:t+k}) &\coloneqq \diag\left(Q_{t}(\xi_{t}), R_{t}(\xi_{t}), Q_{t+1}(\xi_{t+1}), \ldots, R_{t+k-1}(\xi_{t+k-1})\right), \forall t < T - k.
\end{align}
To write down the KKT condition of the equality constrained quadratic programming problem, we also need to define
\begin{align*}
    H_t^T(\xi_{t:T}) \coloneqq{}& \begin{bmatrix}
    M_t^T(\xi_{t:T}) & N_t^{T-1}(\xi_{t:T-1})^{\top}\\
    N_t^{T-1}(\xi_{t:T-1}) & 0
    \end{bmatrix},\\
    \hat{H}_t^{t+k}(\xi_{t:t+k}) \coloneqq{}& \begin{bmatrix}
    \hat{M}_t^{t+k}(\xi_{t:t+k}) & \hat{N}_t^{t+k-1}(\xi_{t:t+k-1})^{\top}\\
    \hat{N}_t^{t+k-1}(\xi_{t:t+k-1}) & 0
    \end{bmatrix}, \\
    b_t^T(z, \xi_{t:T}) \coloneqq{}& \left(Q_{t}(\xi_{t})\bar{x}_{t}(\xi_{t}), 0,  \ldots, P(\xi_{T})\bar{x}_{T}(\xi_{T}), z, w_{t}(\xi_{t}), \ldots, w_{T-1}(\xi_{T-1})\right)^{\top},\\
    \hat{b}_t^{t+k}(z, \xi_{t:t+k}) \coloneqq{}& \left(Q_{t}(\xi_{t})\bar{x}_{t}(\xi_{t}), 0,  \ldots, 0, z, w_{t}(\xi_{t}), \ldots, w_{t+k-1}(\xi_{t+k-1}) - \xi_{t+k}\right)^{\top},\\
    \chi_t^T ={}& \left(y_{t}, v_{t}, y_{t+1}, \ldots, v_{T-1}, y_{T}, \eta_{t}, \eta_{t+1}, \ldots, \eta_{T}\right)^{\top},\\
    \hat{\chi}_t^{t+k} ={}& \left(y_{t}, v_{t}, y_{t+1}, \ldots, v_{t+k-1}, \eta_{t}, \eta_{t+1}, \ldots, \eta_{t+k}\right)^{\top}.
\end{align*}
According to the KKT condition, the optimal primal-dual solution to $\iota_t^T(z, \xi_{t:T}; F_T)$ ($t < T$) is the unique solution $\chi_t^T$ to the linear equation $H_t^T(\xi_{t:T}) \chi_t^T = b_t^T(z, \xi_{t:T})$. Similarly, the optimal primal-dual solution to $\iota_t^{t+k}(z, \xi_{t:t+k}; \mathbb{I})$ ($t < T - k$) is the unique solution $\chi_t^{t+k}$ to the linear equation $\hat{H}_t^{t+k}(\xi_{t:t+k}) \hat{\chi}_t^{t+k} = \hat{b}_t^{t+k}(z, \xi_{t:t+k})$. We provide an illustrative example for $\chi_t^T$ with $(t, T) = (0, 3)$ below:
\setcounter{MaxMatrixCols}{20}
{\small
\begin{align*}
    \left[\begin{array}{ccccccc|cccc}
    Q_0 & & & & & & & I & - A_0^{\top} & & \\
     & R_0 & & & & & & & - B_0^{\top} & & \\
     & & Q_1 & & & & & & I & - A_1^{\top} & \\
     & & & R_1 & & & & & & - B_1^{\top} & \\
     & & & & Q_2 & & & & & I & - A_2^{\top} \\
     & & & & & R_2 & & & & & - B_2^{\top} \\
     & & & & & & P_3 & & & & I \\
     \hline
    I & & & & & & & & & & \\
    - A_0 & - B_0 & I & & & & & & & & \\
     & & -A_1 & -B_1 & I & & & & & & \\
     & & & & -A_2 & -B_2 & I & & & &
    \end{array}\right] \left[\begin{array}{c}
    y_0 \\
    v_0 \\
    y_1 \\
    v_1 \\
    y_2 \\
    v_2 \\
    y_3 \\
    \hline
    \eta_0 \\
    \eta_1 \\
    \eta_2 \\
    \eta_3
    \end{array}\right] = \left[\begin{array}{c}
    Q_0 \bar{x}_0 \\
    0 \\
    Q_1 \bar{x}_1 \\
    0 \\
    Q_2 \bar{x}_2 \\
    0 \\
    P_3 \bar{x}_3 \\
    \hline
    z \\
    w_0 \\
    w_1 \\
    w_2
    \end{array}\right],
\end{align*}
}where we omit the parameters $\xi_{0:3}$ to simplify the notations. Rearranging the rows and columns of the matrix on the left hand side gives the equation:
{\small
\begin{align*}
    \left[\begin{array}{ccc|ccc|ccc|cc}
        Q_0 & & I & & & - A_0^{\top} & & & & & \\
         & R_0 & & & & - B_0^{\top} & & & & & \\
        I & & & & & & & & & & \\
        \hline
         & & & Q_1 & & I & & & -A_1^{\top} & & \\
         & & & & R_1 & & & & - B_1^{\top} & & \\
        -A_0 & - B_0 & & I & & & & & & & \\
        \hline
         & & & & & & Q_2 & & I & & -A_2^{\top} \\
         & & & & & & & R_2 & & & -B_2^{\top} \\
         & & & -A_1 & -B_1 & & I & & & & \\
        \hline
         & & & & & & & & & P & I\\
         & & & & & & -A_2 & -B_2 & & I &
    \end{array}\right] \left[\begin{array}{c}
    y_0 \\
    v_0 \\
    \eta_0 \\
    \hline
    y_1 \\
    v_1 \\
    \eta_1 \\
    \hline
    y_2 \\
    v_2 \\
    \eta_2\\
    \hline
    y_3 \\
    \eta_3
    \end{array}\right] = \left[\begin{array}{c}
    Q_0 \bar{x}_0 \\
    0 \\
    z \\
    \hline
    Q_1 \bar{x}_1 \\
    0 \\
    w_0\\
    \hline
    Q_2 \bar{x}_2 \\
    0 \\
    w_1 \\
    \hline
    P \bar{x}_3 \\
    w_2
    \end{array}\right].
\end{align*}
}Let $\Phi_t^T$ denote the permutation matrix that permute $(y_t, v_t, y_{t+1}, \ldots, v_{T-1}, y_T; \eta_t, \ldots, \eta_T)^\top$ to $(y_t, v_t, \eta_t, y_{t+1}, v_{t+1}, \eta_{t+1}, \ldots, y_T, \eta_T)^\top$.
We use $\Upsilon_t^T(\xi_{t:T}) \coloneqq (\Phi_t^T) H_t^T(\xi_{t:T}) (\Phi_t^T)^\top$ to denote the rearrangement of $H_t^T(\xi_{t:T})$ as illustrated in the above equation, and use $\beta_t^T(z, \xi_{t:T}) \coloneqq (\Phi_t^T) b_t^T(z, \xi_{t:T})$ to denote the corresponding rearrangement of $b_t^T(z, \xi_{t:T})$.

We also provide an illustrative example for $\hat{\chi}_t^{t+k}$ with $(t, k) = (0, 3)$ below:
{\small
\begin{align*}
    \left[\begin{array}{cccccc|cccc}
    Q_0 & & & & & & I & - A_0^{\top} & & \\
     & R_0 & & & & & & - B_0^{\top} & & \\
     & & Q_1 & & & & & I & - A_1^{\top} & \\
     & & & R_1 & & & & & - B_1^{\top} & \\
     & & & & Q_2 & & & & I & - A_2^{\top} \\
     & & & & & R_2 & & & & - B_2^{\top} \\
     \hline
    I & & & & & & & & & \\
    - A_0 & - B_0 & I & & & & & & & \\
     & & -A_1 & -B_1 & I & & & & & \\
     & & & & -A_2 & -B_2 & & & &
    \end{array}\right] \left[\begin{array}{c}
    y_0 \\
    v_0 \\
    y_1 \\
    v_1 \\
    y_2 \\
    v_2 \\
    \hline
    \eta_0 \\
    \eta_1 \\
    \eta_2 \\
    \eta_3
    \end{array}\right] = \left[\begin{array}{c}
    Q_0 \bar{x}_0 \\
    0 \\
    Q_1 \bar{x}_1 \\
    0 \\
    Q_2 \bar{x}_2 \\
    0 \\
    \hline
    z \\
    w_0 \\
    w_1 \\
    w_2
    \end{array}\right],
\end{align*}
}where we omit the parameters $\xi_{0:3}$ to simplify the notations. Rearranging the rows and columns of the matrix on the left hand side gives the equation:
{\small
\begin{align*}
    \left[\begin{array}{ccc|ccc|ccc|c}
        Q_0 & & I & & & - A_0^{\top} & & & &  \\
         & R_0 & & & & - B_0^{\top} & & & &  \\
        I & & & & & & & & &  \\
        \hline
         & & & Q_1 & & I & & & -A_1^{\top} &  \\
         & & & & R_1 & & & & - B_1^{\top} &  \\
        -A_0 & - B_0 & & I & & & & & &  \\
        \hline
         & & & & & & Q_2 & & I &  -A_2^{\top} \\
         & & & & & & & R_2 & &  -B_2^{\top} \\
         & & & -A_1 & -B_1 & & I & & &  \\
        \hline
         & & & & & & -A_2 & -B_2 & &
    \end{array}\right] \left[\begin{array}{c}
    y_0 \\
    v_0 \\
    \eta_0 \\
    \hline
    y_1 \\
    v_1 \\
    \eta_1 \\
    \hline
    y_2 \\
    v_2 \\
    \eta_2\\
    \hline
    \eta_3
    \end{array}\right] = \left[\begin{array}{c}
    Q_0 \bar{x}_0 \\
    0 \\
    z_0 \\
    \hline
    Q_1 \bar{x}_1 \\
    0 \\
    w_0\\
    \hline
    Q_2 \bar{x}_2 \\
    0 \\
    w_1 \\
    \hline
    w_2 - z_3
    \end{array}\right].
\end{align*}
}Let $\hat{\Phi}_t^{t+k}$ denote the permutation matrix that permute $(y_t, v_t, y_{t+1}, \ldots, v_{t+k-1}; \eta_t, \ldots, \eta_{t+k})^\top$ to $(y_t, v_t, \eta_t, y_{t+1}, v_{t+1}, \eta_{t+1}, \ldots, y_{t+k-1}, v_{t+k-1}, \eta_{t+k-1}, \eta_{t+k})^\top$. We use $\hat{\Upsilon}_t^{t+k}(\xi_{t:t+k})\coloneqq (\hat{\Phi}_t^{t+k}) \hat{H}_t^{t+k}(\xi_{t:t+k}) (\hat{\Phi}_t^{t+k})^\top$ to denote the rearrangement of $\hat{H}_t^{t+k}(\xi_{t:t+k})$ as illustrated in the above equation, and use $\hat{\beta}_t^{t+k}(z, \xi_{t:t+k}) \coloneqq (\hat{\Phi}_t^{t+k}) \hat{b}_t^{t+k}(z, \xi_{t:t+k})$ to denote the corresponding rearrangement of $\hat{b}_t^{t+k}(z, \xi_{t:t+k})$.

Before showing the main result about the per-step error, we first show a technical lemma about the singular values of a block matrix in \Cref{lemma:block-singular-value}.

\begin{lemma}\label{lemma:block-singular-value}
Consider a block matrix
\begin{align*}
    H = \begin{bmatrix}
    M & N^{\top}\\
    N & 0
    \end{bmatrix}.
\end{align*}
Here $M \in \mathbb{R}^{n_0 \times n_0}$ is a symmetric positive definite matrix that satisfies $\underline{\sigma}_M I \preceq M \leq \overline{\sigma}_M I$ with $\underline{\sigma}_M > 0$, and $N \in \mathbb{R}^{n_1 \times n_0}$ with $n_1 \leq n_0$ satisfies that $\underline{\sigma}_N \leq \sigma(N) \leq \overline{\sigma}_N$ with $\underline{\sigma}_N > 0$. Then $H$ satisfies that
\[\min(\underline{\sigma}_M, 1) \cdot \overline{\sigma}_N \cdot \sqrt{\frac{\overline{\sigma}_M}{2\underline{\sigma}_M \overline{\sigma}_M + \underline{\sigma}_M (\underline{\sigma}_N)^2}} \leq \sigma(H) \leq \sqrt{2} (\overline{\sigma}_M + \overline{\sigma}_N).\]
\end{lemma}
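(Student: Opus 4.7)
My plan is to handle the two bounds separately, since they have quite different flavors. For the upper bound, I would split $H$ as the sum of a diagonal part $D \coloneqq \operatorname{diag}(M, 0)$ and an off-diagonal part $E \coloneqq \left[\begin{smallmatrix} 0 & N^\top \\ N & 0 \end{smallmatrix}\right]$. Then $\|D\| = \|M\| \leq \overline{\sigma}_M$, and since $E^\top E = \operatorname{diag}(N^\top N,\, N N^\top)$ we get $\|E\| = \overline{\sigma}_N$. The triangle inequality for operator norms gives $\sigma_{\max}(H) = \|H\| \leq \overline{\sigma}_M + \overline{\sigma}_N$, which already sits inside the slacker claimed bound $\sqrt{2}(\overline{\sigma}_M + \overline{\sigma}_N)$.

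For the lower bound, the key observation is that since $M \succeq \underline{\sigma}_M I \succ 0$ and $N$ has full row rank (because $\underline{\sigma}_N > 0$ forces $N N^\top \succeq \underline{\sigma}_N^2 I$), the saddle-point matrix $H$ is invertible, so $\sigma_{\min}(H) = 1/\|H^{-1}\|$. I would write out $H^{-1}$ via Schur complement on $M$:
\[
H^{-1} \;=\; \begin{bmatrix} M^{-1} - M^{-1} N^\top S^{-1} N M^{-1} & M^{-1} N^\top S^{-1} \\ S^{-1} N M^{-1} & -S^{-1} \end{bmatrix}, \qquad S \coloneqq N M^{-1} N^\top ,
\]
and then bound $\|H^{-1}\|$ by controlling each block. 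The eigenvalues of $S$ lie in $[\underline{\sigma}_N^2/\overline{\sigma}_M,\, \overline{\sigma}_N^2/\underline{\sigma}_M]$, so $\|S^{-1}\| \leq \overline{\sigma}_M/\underline{\sigma}_N^2$ while $\|M^{-1}\| \leq 1/\underline{\sigma}_M$ and $\|N\| \leq \overline{\sigma}_N$. Feeding these estimates into the four blocks and aggregating by the triangle inequality (together with the elementary identity $\|\cdot\|_{\mathrm{op}} \leq \sqrt{2}$ times the largest blockwise norm for a $2\times 2$ block layout) produces a bound on $\|H^{-1}\|$ whose reciprocal matches the claimed constant $\min(\underline{\sigma}_M,1)\,\overline{\sigma}_N\sqrt{\overline{\sigma}_M/(2\underline{\sigma}_M\overline{\sigma}_M + \underline{\sigma}_M\underline{\sigma}_N^{\,2})}$ after simplification; the $\min(\underline{\sigma}_M,1)$ factor is exactly what reconciles the two regimes $\underline{\sigma}_M < 1$ and $\underline{\sigma}_M \geq 1$ in the numerator $1/\underline{\sigma}_M$ versus $1/\underline{\sigma}_M^{\,2}$ estimates that appear in the block terms.

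The main obstacle I anticipate is the book-keeping in aggregating the four block norms: one must combine $\|M^{-1}\|$, $\|M^{-1} N^\top S^{-1}\|$, and $\|M^{-1} N^\top S^{-1} N M^{-1}\|$ in a balanced way so that the resulting denominator collects into the closed form $2\underline{\sigma}_M \overline{\sigma}_M + \underline{\sigma}_M \underline{\sigma}_N^{\,2}$ rather than a messier expression. As a cross-check I would also derive the same inequality directly from the variational characterization $\|Hx\|^2 = \|Mu + N^\top v\|^2 + \|Nu\|^2$ by orthogonally decomposing $u = u_\parallel + u_\perp$ with $u_\parallel \in \operatorname{Row}(N)$ and $u_\perp \in \ker N$: the piece $\|u_\parallel\|$ is controlled by $\|Nu\|/\underline{\sigma}_N$, the piece $\|u_\perp\|$ is controlled by projecting $Mu + N^\top v$ onto $\ker N$ and using strong convexity $u_\perp^\top M u_\perp \geq \underline{\sigma}_M \|u_\perp\|^2$, and finally $v$ is recovered from $N^\top v = (Mu + N^\top v) - Mu$ together with $\|N^\top v\| \geq \underline{\sigma}_N \|v\|$ (full column rank of $N^\top$). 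This second route is less compact but makes each constant transparent, which helps verify that no factor has been dropped.
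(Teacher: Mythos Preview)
Your upper-bound argument is correct and essentially the same idea as the paper's (both use the triangle inequality; the paper's extra $\sqrt{2}$ just comes from bounding $\|\alpha\|+\|\beta\|\le\sqrt{2}\,\|(\alpha,\beta)\|$ at the end).

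For the lower bound your route is genuinely different. The paper never writes down $H^{-1}$. Instead it factors
\[
H \;=\; \begin{bmatrix} M^{1/2} & 0 \\ 0 & I \end{bmatrix}
\begin{bmatrix} V & 0 \\ 0 & U \end{bmatrix}
\begin{bmatrix} I & \Sigma^{\top} \\ \Sigma & 0 \end{bmatrix}
\begin{bmatrix} V^{\top} & 0 \\ 0 & U^{\top} \end{bmatrix}
\begin{bmatrix} M^{1/2} & 0 \\ 0 & I \end{bmatrix},
\]
where $U\Sigma V^{\top}$ is the SVD of $N M^{-1/2}$. The two outer factors $\operatorname{diag}(M^{1/2},I)$ each contribute $\min(\sqrt{\underline{\sigma}_M},1)$ to the smallest singular value, and this is precisely where the $\min(\underline{\sigma}_M,1)$ in the statement comes from; the orthogonal conjugation is isometric; and the innermost matrix decouples into scalar $2\times 2$ blocks, so the remaining bound reduces to computing $\min_i \sigma_i^{2}/(2+\sigma_i^{2})$ by completing the square coordinatewise.

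Your Schur-complement approach is a legitimate way to control saddle-point matrices and will yield \emph{some} lower bound on $\sigma_{\min}(H)$. But the step where you assert that the aggregated block norms ``match the claimed constant after simplification'' is the whole content of the lower bound and is not demonstrated. In particular, the four block norms you would obtain are of the order $1/\underline{\sigma}_M$, $\overline{\sigma}_N\overline{\sigma}_M/(\underline{\sigma}_M\underline{\sigma}_N^{2})$, and $\overline{\sigma}_M/\underline{\sigma}_N^{2}$, and there is no evident mechanism by which these combine into the stated form with the $\min(\underline{\sigma}_M,1)$ prefactor; your ``two regimes'' remark is heuristic, not a derivation. The factorization buys exactly this: the asymmetry between the $(1,1)$ block $M$ and the $(2,2)$ block $0$ is absorbed entirely into $\operatorname{diag}(M^{1/2},I)$, leaving a homogeneous inner problem whose constant is transparent. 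If you want to stay with the Schur route you should either accept a different (likely looser) constant or supply the algebra in full; your backup variational argument via the decomposition $u=u_\parallel+u_\perp\in\operatorname{Row}(N)\oplus\ker N$ is closer in spirit to the paper's approach and more likely to land on the stated expression.
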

\begin{proof}[Proof of Lemma \ref{lemma:block-singular-value}]
We first establish the lower bound on $\sigma(H)$: Suppose the singular value decomposition of $N M^{-\frac{1}{2}}$ is given by
\[N M^{-\frac{1}{2}} = U \Sigma V^{\top},\]
where $U \in \mathbb{R}^{n_1 \times n_1}$ and $V \in \mathbb{R}^{n_0 \times n_0}$ are unitary matrices and $\Sigma = diag(\sigma_1, \sigma_2, \ldots, \sigma_{n_1}) \in \mathbb{R}^{n_1 \times n_0}$ with $\frac{\overline{\sigma}_N}{\sqrt{\underline{\sigma}_M}} \geq \sigma_1 \geq \cdots \geq \sigma_m \geq \frac{\underline{\sigma}_N}{\sqrt{\overline{\sigma}_M}}$. We can decompose $H$ as
\begin{align}\label{lemma:block-singular-value:e1}
    H ={}& \begin{bmatrix}
    M^{\frac{1}{2}} & 0\\
    0 & I
    \end{bmatrix}\cdot \begin{bmatrix}
    I & M^{-\frac{1}{2}} F^{\top}\\
    F M^{-\frac{1}{2}} & 0
    \end{bmatrix}\cdot \begin{bmatrix}
    M^{\frac{1}{2}} & 0\\
    0 & I
    \end{bmatrix}\nonumber\\
    ={}& \begin{bmatrix}
    M^{\frac{1}{2}} & 0\\
    0 & I
    \end{bmatrix}\cdot \begin{bmatrix}
    V & 0\\
    0 & U
    \end{bmatrix}\cdot \begin{bmatrix}
    I & \Sigma^{\top}\\
    \Sigma & 0
    \end{bmatrix}\cdot \begin{bmatrix}
    V^{\top} & 0\\
    0 & U^{\top}
    \end{bmatrix} \cdot \begin{bmatrix}
    M^{\frac{1}{2}} & 0\\
    0 & I
    \end{bmatrix}.
\end{align}
Note that for any $\alpha \in \mathbb{R}^{n_0}$ and $\beta \in \mathbb{R}^{n_1}$, we have
\begin{align*}
    \norm{\begin{bmatrix}
    I & \Sigma^{\top}\\
    \Sigma & 0
    \end{bmatrix} \begin{bmatrix}
    \alpha\\
    \beta
    \end{bmatrix}}^2 ={}& \sum_{i=1}^{n_1} (\alpha_i + \sigma_i \beta_i)^2 + \sum_{i=n_1+1}^{n_0} \alpha_i^2 + \sum_{i=1}^{n_1} \sigma_i^2 \alpha_i^2\\
    ={}& \sum_{i=1}^{n_1} \left(\left(1 + \frac{\sigma_i^2}{2}\right)\left(\alpha_i +\frac{2\sigma_i}{2 + \sigma_i^2} \beta_i\right)^2 + \frac{\sigma_i^2}{2}\alpha_i^2 + \frac{\sigma_i^2}{2 + \sigma_i^2}\beta_i^2\right) + \sum_{i=n_1+1}^{n_0} \alpha_i^2\\
    \geq{}& \left(\min_i \frac{\sigma_i^2}{2 + \sigma_i^2}\right) \norm{\begin{bmatrix}
    \alpha\\
    \beta
    \end{bmatrix}}^2\\
    \geq{}& \frac{\underline{\sigma}_M (\underline{\sigma}_N)^2}{2\underline{\sigma}_M \overline{\sigma}_M + \overline{\sigma}_M (\overline{\sigma}_N)^2} \norm{\begin{bmatrix}
    \alpha\\
    \beta
    \end{bmatrix}}^2.
\end{align*}
Therefore, by \eqref{lemma:block-singular-value:e1}, we see that
\[\norm{H \begin{bmatrix}
    \alpha\\
    \beta
    \end{bmatrix}} \geq \min(\underline{\sigma}_M, 1) \cdot \underline{\sigma}_N \cdot \sqrt{\frac{\underline{\sigma}_M}{2\underline{\sigma}_M \overline{\sigma}_M + \overline{\sigma}_M (\overline{\sigma}_N)^2}} \cdot \norm{\begin{bmatrix}
    \alpha\\
    \beta
    \end{bmatrix}}.\]
This finishes the proof of the lower bound.

For the upper bound, note that
\begin{align*}
    \norm{H \begin{bmatrix}
    \alpha\\
    \beta
    \end{bmatrix}} &\leq \norm{M \alpha + N^{\top} \beta} + \norm{N \alpha}\\
    &\leq  \norm{M \alpha} + \norm{N^{\top} \beta} + \norm{N \alpha}\\
    &\leq (\overline{\sigma}_M + \overline{\sigma}_N)(\norm{x} + \norm{y})\\ &\leq \sqrt{2} (\overline{\sigma}_M + \overline{\sigma}_N) \norm{\begin{bmatrix}
    \alpha\\
    \beta
    \end{bmatrix}}.
\end{align*}
\end{proof}


Since the primal-dual optimal solution to $\iota_t^T(z, \xi_{t:T}; F_T)$ and $\iota_t^{t+k}(z, \xi_{t:t+k}; \mathbb{I})$ are given by $\left(\Upsilon_t^T(\xi_{t:T})\right)^{-1} \beta_t^T(z, \xi_{t:T})$ and $\left(\hat{\Upsilon}_t^{t+k}(\xi_{t:t+k})\right)^{-1} \hat{\beta}_t^{t+k}(z, \xi_{t:t+k})$ respectively, it is critical to establish the exponentially decaying bounds for the matrices $\left(\Upsilon_t^T(\xi_{t:T})\right)^{-1}$ and $\left(\hat{\Upsilon}_t^{t+k}(\xi_{t:t+k})\right)^{-1}$. Note that after the rearrangement, $\Upsilon_t^T(\xi_{t:T})$ is a block matrix with $(T - t + 1) \times (T - t + 1)$ blocks, indexed by $(i, j) \in [t, T]^2$; $\Upsilon_t^{t+k}(\xi_{t:t+k})$ is a block matrix with $(k + 1) \times (k + 1)$ blocks, indexed by $(i, j) \in [t, t+k]^2$.

\begin{lemma}\label{thm:diff-inverse-sophisticated}
Under Assumption \ref{assump:unconstrained:dynamics}, the following inequalities hold for the norm of the block entries of $\left(\Upsilon_t^T(\xi_{t:T})\right)^{-1}$ and $\left(\hat{\Upsilon}_t^{t+k}(\xi_{t:t+k})\right)^{-1}$:
\begin{align}\label{thm:diff-inverse-sophisticated:e1}
    \norm{\left(\Upsilon_{t}^{T}(\xi_{t:T})^{-1}\right)_{i j}} &\leq C_2 \lambda_2^{\abs{i - j}}, \forall (i, j) \in [t, T]^2, \forall \xi_{t:T} \in \varXi_{t:T},\nonumber\\
    \norm{\left(\hat{\Upsilon}_{t}^{t+k}(\xi_{t:t+k})^{-1}\right)_{i j}} &\leq C_2 \lambda_2^{\abs{i - j}}, \forall (i, j) \in [t, t+k]^2, \forall \xi_{t:t+k} \in \varXi_{t:t+k}.
\end{align}
Further, the following inequalities hold for the norm of differences between the block entries of $\left(\Upsilon_t^T(\xi_{t:T})\right)^{-1}$ and $\left(\hat{\Upsilon}_t^{t+k}(\xi_{t:t+k})\right)^{-1}$: For all $(i, j) \in [t, T]^2$ and $\xi_{t:T} \in \varXi_{t:T}$, we have
\begin{align}\label{thm:diff-inverse-sophisticated:e2}
    \norm{\left(\Upsilon_{t}^{T}(\xi_{t:T})^{-1} - \Upsilon_{t}^{T}(\xi_{t:T}')^{-1}\right)_{i j}} \leq C_2' \sum_{\tau=t}^{T} \lambda_2^{\abs{\tau - i} + \abs{\tau - j}}\cdot \norm{\xi_\tau - \xi_\tau'},
\end{align}
For all $(i, j) \in [t, t+k]^2$ and $\xi_{t:T} \in \varXi_{t:t+k}$ with $t < T - k$, we have
\begin{align}\label{thm:diff-inverse-sophisticated:e2-1}
    \norm{\left(\hat{\Upsilon}_{t}^{t+k}(\xi_{t:t+k})^{-1} - \hat{\Upsilon}_{t}^{t+k}(\xi_{t:t+k}')^{-1}\right)_{i j}} \leq C_2' \sum_{\tau=t}^{t+k} \lambda_2^{\abs{\tau - i} + \abs{\tau - j}}\cdot \norm{\xi_\tau - \xi_\tau'},
\end{align}
where the constants $C_2, C_2',$ and $\lambda_2$ are given by
\begin{align*}
    \lambda_2 &= \left(\frac{\overline{\sigma}_H - \underline{\sigma}_H}{\overline{\sigma}_H + \underline{\sigma}_H}\right)^{\frac{1}{2}}, C_2 = \frac{4(\ell + 1 + a + b)}{\underline{\sigma}_H^2 \cdot \lambda_2},\\
    C_2' &= C_2^2 \left(\max\{L_Q + L_R, L_P\} + \frac{2}{\lambda_2}(L_A + L_B)\right).
\end{align*}
where $\underline{\sigma}_H$ and $\overline{\sigma}_H$ are defined as
\[\underline{\sigma}_H \coloneqq \min(\mu, 1) \cdot (a + b + 1) \cdot \sqrt{\frac{\ell}{2\mu \ell + \mu \sigma^2}}, \text{ and } \overline{\sigma}_H \coloneqq \sqrt{2} (\ell + a + b + 1).\]
\end{lemma}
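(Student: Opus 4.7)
The plan is to establish both the entrywise decay bound \eqref{thm:diff-inverse-sophisticated:e1} and the perturbation bound \eqref{thm:diff-inverse-sophisticated:e2} from a single structural observation: after the explicit permutation displayed in the excerpt, $\Upsilon_t^T(\xi_{t:T})$ and $\hat{\Upsilon}_t^{t+k}(\xi_{t:t+k})$ are block tridiagonal (block bandwidth one), since each time step's variables are coupled only to the immediately preceding and following time step through the dynamics blocks $A_t, B_t$. Combining this block-banded structure with uniform singular value bounds on the matrices then yields both claims via a standard polynomial-approximation argument.

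First I would apply Lemma \ref{lemma:block-singular-value} to $\Upsilon_t^T(\xi_{t:T})$, identifying $M$ with the block diagonal of cost matrices $\diag(Q_t,R_t,\dots,P_T)$ (which satisfies $\mu I \preceq M \preceq \ell I$ under Assumption \ref{assump:unconstrained:dynamics}), and $N$ with $N_t^{T-1}(\xi_{t:T-1})$. Assumption \ref{assump:unconstrained-LTV-pred-err-dynamics} supplies $\sigma_{\min}(N)\geq\sigma$, and the uniform bounds $\norm{A_t}\leq a,\norm{B_t}\leq b$ yield $\norm{N}\leq a+b+1$. Substituting into Lemma \ref{lemma:block-singular-value} produces exactly the constants $\underline{\sigma}_H$ and $\overline{\sigma}_H$ in the lemma statement, and the same identification works (with only a cosmetic change in the last block row) for $\hat{\Upsilon}_t^{t+k}$. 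Next, pass to the symmetric positive definite matrix $\Upsilon^\top\Upsilon$, whose spectrum lies in $[\underline{\sigma}_H^2,\overline{\sigma}_H^2]$ and whose block bandwidth is two. A standard Chebyshev approximation of $x^{-1}$ on $[\underline{\sigma}_H^2,\overline{\sigma}_H^2]$ gives, for every integer $k$, a polynomial $P_k$ of degree $k$ with $\norm{(\Upsilon^\top\Upsilon)^{-1}-P_k(\Upsilon^\top\Upsilon)}\leq \underline{\sigma}_H^{-2}\lambda_2^{2(k+1)}$, where $\lambda_2^2=(\overline{\sigma}_H-\underline{\sigma}_H)/(\overline{\sigma}_H+\underline{\sigma}_H)$ is the Chebyshev rate in $\sqrt{\kappa(\Upsilon^\top\Upsilon)}$. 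Because $P_k(\Upsilon^\top\Upsilon)$ has block bandwidth $2k$, its $(i,j)$ block vanishes whenever $|i-j|>2k$; choosing $k\approx|i-j|/2$ yields $\norm{((\Upsilon^\top\Upsilon)^{-1})_{ij}}\leq C\underline{\sigma}_H^{-2}\lambda_2^{|i-j|}$. Multiplying by $\Upsilon^\top$ on the right (a block-banded factor of bandwidth one whose operator norm is bounded by $\overline{\sigma}_H\leq \sqrt{2}(\ell+1+a+b)$) produces \eqref{thm:diff-inverse-sophisticated:e1} with $C_2=4(\ell+1+a+b)/(\underline{\sigma}_H^2\lambda_2)$.

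For the perturbation bound \eqref{thm:diff-inverse-sophisticated:e2} I would apply the resolvent identity $\Upsilon^{-1}(\xi)-\Upsilon^{-1}(\xi')=\Upsilon^{-1}(\xi)(\Upsilon(\xi')-\Upsilon(\xi))\Upsilon^{-1}(\xi')$ and expand the $(i,j)$ block as $\sum_{k,l}(\Upsilon^{-1}(\xi))_{ik}(\Upsilon(\xi')-\Upsilon(\xi))_{kl}(\Upsilon^{-1}(\xi'))_{lj}$. The middle factor inherits block tridiagonality from $\Upsilon$, so only $|k-l|\leq 1$ contribute; diagonal blocks are bounded by $(L_Q+L_R)\norm{\xi_\tau-\xi_\tau'}$ or $L_P\norm{\xi_T-\xi_T'}$ at the terminus, while off-diagonal blocks are bounded by $(L_A+L_B)\norm{\xi_\tau-\xi_\tau'}$. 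Plugging in the decay bound from \eqref{thm:diff-inverse-sophisticated:e1} to each of the two inverses collapses the sum to $\sum_{\tau}C_2^2[\,\max(L_Q+L_R,L_P)+\tfrac{2}{\lambda_2}(L_A+L_B)\,]\lambda_2^{|i-\tau|+|\tau-j|}\norm{\xi_\tau-\xi_\tau'}$, where the extra $\lambda_2^{-1}$ on $(L_A+L_B)$ accounts for the one-block shift between $k$ and $l$ in off-diagonal contributions. The argument for $\hat{\Upsilon}_t^{t+k}$ is identical.

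The main obstacle is cleanly combining the Chebyshev/bandwidth argument with the right choice of polynomial degree so that the constants match the stated $C_2$ and $C_2'$: one must track (a) short-distance regimes where the trivial bound $\norm{\Upsilon^{-1}}\leq 1/\underline{\sigma}_H$ dominates and dictates the overall constant, (b) the single-block shift incurred when converting a bound on $(\Upsilon^\top\Upsilon)^{-1}$ into a bound on $\Upsilon^{-1}$, and (c) the small structural difference in the final block of $\hat{\Upsilon}_t^{t+k}$, which needs a separate application of Lemma \ref{lemma:block-singular-value} with the terminal $P_T$ block replaced by the identity from $\mathbb{I}$. These are bookkeeping items rather than conceptual difficulties, but they are where the precise numerical constants in the statement are set.
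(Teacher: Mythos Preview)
Your proposal is correct and follows essentially the same route as the paper: apply Lemma~\ref{lemma:block-singular-value} to bound the spectrum of $\Upsilon$, use a polynomial-approximation/banded-matrix argument on the positive definite $\Upsilon^2$ (the paper cites Lemma~B.1 of \cite{lin2021perturbation} for this step, which is exactly the Chebyshev-type bound you describe) and then multiply by the tridiagonal factor $\Upsilon$ to recover the decay on $\Upsilon^{-1}$; the perturbation bound is obtained, as you propose, from the resolvent identity together with the block-tridiagonal structure and Lipschitz constants of $\Upsilon(\xi)-\Upsilon(\xi')$. The only cosmetic differences are that the paper writes $\Upsilon^2$ rather than $\Upsilon^\top\Upsilon$ (these coincide since $\Upsilon$ is symmetric) and states the bandwidth as~4 rather than~2, and that in the final step it bounds the three nonzero blocks in each row of $\Upsilon$ individually by $\ell+1+a+b$ rather than invoking the global operator norm $\overline{\sigma}_H$.
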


\begin{proof}[Proof of \Cref{thm:diff-inverse-sophisticated}]
In the proof, we only show the results for $\Upsilon_t^T$. The results for $\hat{\Upsilon}_t^{t+k}$ can be shown using the same method.

We first show \eqref{thm:diff-inverse-sophisticated:e1} holds. By \Cref{lemma:block-singular-value}, we know that Note that $\Upsilon_{t}^{T}(\xi_{t:T})^2$ is a positive definite matrix that has band width $4$ and satisfies
\[\underline{\sigma}_H^2 I \preceq \Upsilon_{t}^{T}(\xi_{t:T})^2 \preceq \overline{\sigma}_H^2 I.\]
Using the same method as the proof of Lemma B.1 in \cite{lin2021perturbation}, one can show that for any $(i, j) \in [t, T]^2$,
\begin{align}\label{thm:diff-inverse-sophisticated:e3}
    \left(\left(\Upsilon_{t}^{T}(\xi_{t:T})^2\right)^{-1}\right)_{i j} \leq \frac{2}{\underline{\sigma}_H^2} \cdot \lambda_2^{\abs{i - j}}.
\end{align}
Note that $\Upsilon_{t}^{T}(\xi_{t:T})^{-1} \coloneqq \Upsilon_{t}^{T}(\xi_{t:T}) \cdot \left(\Upsilon_{t}^{T}(\xi_{t:T})^2\right)^{-1}$. Thus we see that
\begin{align*}
    \left(\Upsilon_{t}^{T}(\xi_{t:T})^{-1}\right)_{i j} &= \left(\Upsilon_{t}^{T}(\xi_{t:T}) \cdot \left(\Upsilon_{t}^{T}(\xi_{t:T})^2\right)^{-1}\right)_{i j}\\
    &= \sum_{k = t}^{T} \Upsilon_{t}^{T}(\xi_{t:T})_{i k} \cdot \left(\left(\Upsilon_{t}^{T}(\xi_{t:T})^2\right)^{-1}\right)_{k j}\\
    &= \sum_{k = i-1}^{i+1} \Upsilon_{t}^{T}(\xi_{t:T})_{i k} \cdot \left(\left(\Upsilon_{t}^{T}(\xi_{t:T})^2\right)^{-1}\right)_{k j}.
\end{align*}
Therefore, by \eqref{thm:diff-inverse-sophisticated:e3}, we see that
\begin{align}\label{thm:diff-inverse-sophisticated:e3-1}
    \norm{\left(\Upsilon_{t}^{T}(\xi_{t:T})^{-1}\right)_{i j}} \leq \frac{4(\ell + 1 + a + b)}{\underline{\sigma}_H^2 \cdot \lambda_2} \cdot \lambda_2^{\abs{i - j}}, \forall (i, j) \in [t, T]^2.
\end{align}

Note that we have
\begin{align}\label{thm:diff-inverse-sophisticated:e4}
    \Upsilon_{t}^{T}(\xi_{t:T})^{-1} - \Upsilon_{t}^{T}(\xi_{t:T}')^{-1} = - (\Upsilon_{t}^{T}(\xi_{t:T}'))^{-1} \left(\Upsilon_{t}^{T}(\xi_{t:T}) - \Upsilon_{t}^{T}(\xi_{t:T}')\right) \Upsilon_{t}^{T}(\xi_{t:T})^{-1}.
\end{align}
To simplify the notation, we define
\[E_\tau \coloneqq \left(\Upsilon_{t}^{T}(\xi_{t:T}) - \Upsilon_{t}^{T}(\xi_{t:T}')\right)_{\tau \tau}, \forall \tau \in [t, T],\]
and
\[E_\tau' \coloneqq \left(\Upsilon_{t}^{T}(\xi_{t:T}) - \Upsilon_{t}^{T}(\xi_{t:T}')\right)_{(\tau+1) \tau}, \forall \tau \in [t, T-1].\]
The right hand side of \eqref{thm:diff-inverse-sophisticated:e4} is a linear equation. Thus we can study the following 3 equations separately and sum them up:
\begin{subequations}\label{thm:diff-inverse-sophisticated:e5}
\begin{align}
    \Phi_\tau &\coloneqq (\Upsilon_{t}^{T}(\xi_{t:T}'))^{-1} \begin{bmatrix}
    0 & & & & & & \\
     & \ddots & & & & & \\
     & & 0 & & & & \\
     & & & E_\tau & & & \\
     & & & & 0 & & \\
     & & & & & \ddots & \\
     & & & & & & 0
    \end{bmatrix} (\Upsilon_{t}^{T}(\xi_{t:T}))^{-1}, \forall \tau \in [t, T],\label{thm:diff-inverse-sophisticated:e5:s1}\\
    \Phi_\tau^L &\coloneqq (\Upsilon_{t}^{T}(\xi_{t:T}'))^{-1} \begin{bmatrix}
    0 & & & & & \\
     & \ddots & & & & \\
     & & 0 & & & \\
     & & E_\tau' & 0 & & \\
     & & & & \ddots & \\
     & & & & & 0
    \end{bmatrix} (\Upsilon_{t}^{T}(\xi_{t:T}))^{-1}, \forall \tau \in [t, T-1],\label{thm:diff-inverse-sophisticated:e5:s2}\\
    \Phi_\tau^U &\coloneqq (\Upsilon_{t}^{T}(\xi_{t:T}'))^{-1} \begin{bmatrix}
    0 & & & & & \\
     & \ddots & & & & \\
     & & 0 & (E_\tau')^{\top} & & \\
     & & & 0 & & \\
     & & & & \ddots & \\
     & & & & & 0
    \end{bmatrix} (\Upsilon_{t}^{T}(\xi_{t:T}))^{-1}, \forall \tau \in [t, T-1].\label{thm:diff-inverse-sophisticated:e5:s3}
\end{align}
\end{subequations}
By \eqref{thm:diff-inverse-sophisticated:e5:s1}, \eqref{thm:diff-inverse-sophisticated:e5:s2}, and \eqref{thm:diff-inverse-sophisticated:e5:s3}, we see that
\begin{subequations}\label{thm:diff-inverse-sophisticated:e6}
\begin{align}
    \norm{(\Phi_\tau)_{i j}} &\leq \norm{\left((\Upsilon_{t}^{T}(\xi_{t:T}'))^{-1}\right)_{i \tau}} \cdot \norm{E_\tau} \cdot \norm{\left(\Upsilon_{t}^{T}(\xi_{t:T})^{-1}\right)_{\tau j}}\nonumber\\
    &\leq C_2^2 \cdot \lambda_2^{\abs{\tau - i} + \abs{\tau - j}} \cdot \norm{E_\tau},\label{thm:diff-inverse-sophisticated:e6:s1}\\
    \norm{(\Phi_\tau^L)_{i j}} &\leq \norm{\left((\Upsilon_{t}^{T}(\xi_{t:T}'))^{-1}\right)_{i \tau}} \cdot \norm{E_\tau'} \cdot \norm{\left(\Upsilon_{t}^{T}(\xi_{t:T})^{-1}\right)_{(\tau+1) j}}\nonumber\\
    &\leq C_2^2 \cdot \lambda_2^{\abs{\tau - i} + \abs{\tau+1 - j}} \cdot \norm{E_\tau'},\label{thm:diff-inverse-sophisticated:e6:s2}\\
    \norm{(\Phi_\tau^U)_{i j}} &\leq \norm{\left((\Upsilon_{t}^{T}(\xi_{t:T}'))^{-1}\right)_{i (\tau+1)}} \cdot \norm{E_\tau'} \cdot \norm{\left(\Upsilon_{t}^{T}(\xi_{t:T})^{-1}\right)_{\tau j}}\nonumber\\
    &\leq C_2^2 \cdot \lambda_2^{\abs{\tau+1 - i} + \abs{\tau - j}} \cdot \norm{E_\tau'},\label{thm:diff-inverse-sophisticated:e6:s3}
\end{align}
\end{subequations}
where we use the bound \eqref{thm:diff-inverse-sophisticated:e3-1} on the norm of individual block entries in \eqref{thm:diff-inverse-sophisticated:e6:s1}, \eqref{thm:diff-inverse-sophisticated:e6:s2}, and \eqref{thm:diff-inverse-sophisticated:e6:s3}. Summing these inequalities up over $\tau$, we see that
\begin{subequations}\label{thm:diff-inverse-sophisticated:e7}
\begin{align}
    &\norm{\left(\Upsilon_{t}^{T}(\xi_{t:T})^{-1} - \Upsilon_{t}^{T}(\xi_{t:T}')^{-1}\right)_{i j}}\nonumber\\
    ={}& \norm{\sum_{\tau=t}^{T} (\Phi_\tau)_{i j} + \sum_{\tau=t}^{T - 1} (\Phi_\tau^L)_{i j} + \sum_{\tau=t}^{T-1} (\Phi_\tau^U)_{i j}}\label{thm:diff-inverse-sophisticated:e7:s1}\\
    \leq{}& \sum_{\tau=t}^{T} \norm{(\Phi_\tau)_{i j}} + \sum_{\tau=t}^{T-1} \norm{(\Phi_\tau^L)_{i j}} + \sum_{\tau=t}^{T-1} \norm{(\Phi_\tau^U)_{i j}}\label{thm:diff-inverse-sophisticated:e7:s2}\\
    \leq{}& C_2^2 \left(\sum_{\tau=t}^{T} \lambda_2^{\abs{\tau - i} + \abs{\tau - j}} \cdot \norm{E_\tau} + \frac{2}{\lambda_2} \sum_{\tau=t}^{T-1} \lambda_2^{\abs{\tau - i} + \abs{\tau - j}} \cdot \norm{E_\tau'}\right)\label{thm:diff-inverse-sophisticated:e7:s3}\\
    \leq{}& C_2^2 \left(\max\{L_Q + L_R, L_P\} + \frac{2}{\lambda_2}(L_A + L_B)\right) \sum_{\tau=t}^{T} \lambda_2^{\abs{\tau - i} + \abs{\tau - j}} \cdot \norm{\xi_\tau - \xi_\tau'},\label{thm:diff-inverse-sophisticated:e7:s4}
\end{align}
\end{subequations}
where we use \eqref{thm:diff-inverse-sophisticated:e4} in \eqref{thm:diff-inverse-sophisticated:e7:s1}; we use the triangle inequality in \eqref{thm:diff-inverse-sophisticated:e7:s2}; we use \eqref{thm:diff-inverse-sophisticated:e6} in \eqref{thm:diff-inverse-sophisticated:e7:s3}; we use the Lipschitzness of dynamical and cost matrices in $\xi$ (Assumption \ref{assump:unconstrained:dynamics}) in \eqref{thm:diff-inverse-sophisticated:e7:s4}.
\end{proof}

With \Cref{thm:diff-inverse-sophisticated}, we can derive the perturbation bounds specified by Property \ref{assump:pipeline-perturbation-bounds}.

\begin{theorem}\label{thm:perturbation-bound-unconstrained-dynamics}
Under Assumption \ref{assump:unconstrained:dynamics}, Property \ref{assump:pipeline-perturbation-bounds} holds for arbitrary positive constant $R$ and $q_1(t) = H_2 \lambda_2^{2t}$, $q_2(t) = H_2 \lambda_2^t$, and $q_3(t) = H_2 \lambda_2^t$, where $\lambda_2$ is defined in \Cref{thm:diff-inverse-sophisticated}, and $H_2$ is given by
\[H_2 = C_2'\left(\frac{2 (\ell D_{\bar{x}} + D_w)}{1 - \lambda_2} + R + D_{x^*} + 1\right) + C_2 \left(L_w + \ell L_{\bar{x}} + D_{\bar{x}} L_Q + 1\right).\]
\end{theorem}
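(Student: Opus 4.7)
The plan is to reduce both perturbation bounds \eqref{equ:perturbation-bound-fix-initial} and \eqref{equ:perturbation-bound-fix-parameters} to algebraic statements about the rearranged KKT matrix inverses $\left(\Upsilon_t^T(\xi_{t:T})\right)^{-1}$ and $\left(\hat{\Upsilon}_t^{t+k}(\xi_{t:t+k})\right)^{-1}$, and then quote \Cref{thm:diff-inverse-sophisticated} as a black box. Since in this LTV quadratic setting the optimal primal–dual solution is exactly $\left(\Upsilon_t^T(\xi_{t:T})\right)^{-1} \beta_t^T(z,\xi_{t:T})$ (and similarly with hats for the MPC subproblem with indicator terminal cost), both perturbation bounds become statements about (i) how block-entries of these inverses depend on $\xi$, and (ii) how the right-hand side $\beta$ depends on $z$ and $\xi$. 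In particular the bounds are global in $\xi$ and $z$, which is why the hypothesis is satisfied for arbitrary radius $R$.

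First I would dispatch \eqref{equ:perturbation-bound-fix-parameters}. Here the parameters are fixed to $\xi^*$ and only $z$ varies, so $\beta(z,\xi^*) - \beta(z',\xi^*)$ is supported on the single ``initial-state'' block (the row enforcing $y_{t_1}=z$). Consequently, the $y_h$- and $v_h$-blocks of the difference $\psi(z) - \psi(z')$ are each $\left(\Upsilon^{-1}\right)_{h,t_1}(z-z')$, and \eqref{thm:diff-inverse-sophisticated:e1} gives $q_3(h-t_1) = C_2 \lambda_2^{h-t_1}$, which is at most $H_2 \lambda_2^{h-t_1}$ since the final term in the definition of $H_2$ contains $C_2 \cdot 1$.

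Next I would handle \eqref{equ:perturbation-bound-fix-initial} via the standard identity
\[
\Upsilon(\xi)^{-1}\beta(z,\xi) - \Upsilon(\xi')^{-1}\beta(z,\xi') = \bigl[\Upsilon(\xi)^{-1} - \Upsilon(\xi')^{-1}\bigr]\beta(z,\xi) + \Upsilon(\xi')^{-1}\bigl[\beta(z,\xi) - \beta(z,\xi')\bigr],
\]
treating the two Property~\ref{assump:pipeline-perturbation-bounds} specifications uniformly (with $(\hat{\Upsilon}, \hat{\beta})$ in the short-horizon MPC case, where the last block of $\hat{\beta}$ is $w_{t+k-1}(\xi_{t+k-1}) - \xi_{t+k}$, and with $(\Upsilon,\beta)$ in the tail case). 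For the $v_{t_1}$ row of the first summand, I apply \eqref{thm:diff-inverse-sophisticated:e2}–\eqref{thm:diff-inverse-sophisticated:e2-1} block by block. The block of $\beta$ that carries $z$ sits at position $t_1$, so its contribution is $C_2' \sum_\tau \lambda_2^{|\tau-t_1|+|\tau-t_1|} \delta_\tau \norm{z} = C_2' \sum_\tau \lambda_2^{2(\tau-t_1)} \delta_\tau \norm{z}$: this is precisely the $q_1(\tau) = H_2 \lambda_2^{2\tau}$ term, with the crucial ``double decay'' arising because the source block and the observation block coincide. The remaining blocks of $\beta$ (i.e.\ $Q_\tau(\xi_\tau)\bar{x}_\tau(\xi_\tau)$, $w_\tau(\xi_\tau)$, $P_T(\xi_T)\bar{x}_T(\xi_T)$, and the terminal-state target $\xi_{t+k}$) are all uniformly bounded by constants of the form $\ell D_{\bar{x}}$, $D_w$, or $R+D_{x^*}$, so summing $\sum_j \lambda_2^{|\tau-j|}$ against these constants contributes at most $\tfrac{2(\ell D_{\bar{x}}+D_w)}{1-\lambda_2} + R + D_{x^*}$ to a single $q_2$ term proportional to $C_2' \lambda_2^{\tau-t_1} \delta_\tau$. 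For the second summand, $\beta(z,\xi) - \beta(z,\xi')$ is supported on blocks at position $\tau$ whose magnitude is Lipschitz in $\xi_\tau$ (with $L_w$ for $w$, and $\ell L_{\bar{x}} + D_{\bar{x}} L_Q$ for $Q\bar{x}$), so \eqref{thm:diff-inverse-sophisticated:e1} directly yields another $C_2(L_w + \ell L_{\bar{x}} + D_{\bar{x}} L_Q) \lambda_2^{\tau-t_1} \delta_\tau$ contribution to $q_2$. Adding these and matching constants recovers the stated $H_2$.

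The main obstacle is purely bookkeeping: keeping straight which block in the rearranged KKT system corresponds to which time index, so that the decay exponents $|\tau-i|+|\tau-j|$ from \eqref{thm:diff-inverse-sophisticated:e2}–\eqref{thm:diff-inverse-sophisticated:e2-1} correctly produce $\lambda_2^{2(\tau-t_1)}$ for the $\norm{z}$-coefficient and $\lambda_2^{\tau-t_1}$ for the constant-coefficient parts. A secondary subtlety is that in the short-horizon case the last entry of $\hat{\beta}$ bundles $w_{t+k-1}(\xi_{t+k-1})$ with $\xi_{t+k}$, which must be bounded by $D_w + R + D_{x^*}$ using the assumption $\xi_{t+k} \in \mathcal{B}(x_{t+k}^*, R)$; beyond this, the argument is fully parallel to the long-horizon case with $F_T$.
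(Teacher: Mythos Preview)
Your proposal is correct and follows essentially the same route as the paper: you use the KKT characterization to write the optimal primal--dual solution as $\Upsilon^{-1}\beta$ (or $\hat{\Upsilon}^{-1}\hat{\beta}$), split the perturbation via the same identity into an ``inverse-difference'' term and a ``right-hand-side-difference'' term, and then invoke \Cref{thm:diff-inverse-sophisticated} block by block, isolating the $\lambda_2^{2\tau}$ decay for the $z$-block and $\lambda_2^{\tau}$ for the remaining blocks; the paper's proof does exactly this, including the same treatment of the terminal entry $w_{t+k-1}(\xi_{t+k-1}) - \xi_{t+k}$ via $\|\xi_{t+k}\| \leq R + D_{x^*}$ and the same handling of $q_3$ via the single-block support of $\beta(z,\xi^*)-\beta(z',\xi^*)$.
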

\begin{proof}[Proof of \Cref{thm:perturbation-bound-unconstrained-dynamics}]
For $t < T - k$, under the specification of Property \ref{assump:pipeline-perturbation-bounds}, we see that
\begin{subequations}\label{thm:perturbation-bound-unconstrained-dynamics:e1}
\begin{align}
    &\norm{\psi_t^{t+k}\left(z, \xi_{t:t+k}; \mathbb{I}\right)_{v_t} - \psi_t^{t+k}\left(z, \xi_{t:t+k}'; \mathbb{I}\right)_{v_t}}\nonumber\\
    \leq{}& \norm{\left(\hat{\Upsilon}_t^{t+k}(\xi_{t:t+k})^{-1} \hat{\beta}_t^{t+k}(z, \xi_{t:t+k}) - \hat{\Upsilon}_t^{t+k}(\xi_{t:t+k}')^{-1} \hat{\beta}_t^{t+k}(z, \xi_{t:t+k}')\right)_{v_t}} \label{thm:perturbation-bound-unconstrained-dynamics:e1:s1}\\
    \leq{}& \norm{\left(\left(\hat{\Upsilon}_t^{t+k}(\xi_{t:t+k})^{-1} - \hat{\Upsilon}_t^{t+k}(\xi_{t:t+k}')^{-1}\right) \hat{\beta}_t^{t+k}(z, \xi_{t:t+k})\right)_{v_t}}\nonumber\\
    &+ \norm{\left(\hat{\Upsilon}_t^{t+k}(\xi_{t:t+k}')^{-1} \left(\hat{\beta}_t^{t+k}(z, \xi_{t:t+k}) - \hat{\beta}_t^{t+k}(z, \xi_{t:t+k}')\right)\right)_{v_t}}, \label{thm:perturbation-bound-unconstrained-dynamics:e1:s2}
\end{align}
\end{subequations}
where we used the KKT condition in \eqref{thm:perturbation-bound-unconstrained-dynamics:e1:s1} and the triangle inequality in \eqref{thm:perturbation-bound-unconstrained-dynamics:e1:s2}.

For the first term in \eqref{thm:perturbation-bound-unconstrained-dynamics:e1:s2}, we see that
\begin{subequations}\label{thm:perturbation-bound-unconstrained-dynamics:e2}
\begin{align}
    &\norm{\left(\left(\hat{\Upsilon}_{t}^{t+k}(\xi_{t:t+k})^{-1} - \hat{\Upsilon}_{t}^{t+k}(\xi_{t:t+k}')^{-1}\right) \cdot \hat{\beta}_t^{t+k}(z, \xi_{t:t+k})\right)_{v_t}}\nonumber\\
    \leq{}& \sum_{\tau=t}^{t+k} \norm{\left(\hat{\Upsilon}_{t}^{t+k}(\xi_{t:t+k})^{-1} - \hat{\Upsilon}_{t}^{t+k}(\xi_{t:t+k}')^{-1}\right)_{t \tau}} \cdot \norm{\hat{\beta}_t^{t+k}(z, \xi_{t:t+k})_\tau} \label{thm:perturbation-bound-unconstrained-dynamics:e2:s1}\\
    \leq{}& C_2' \left(\sum_{\tau = 0}^k \lambda_2^{2\tau} \norm{\xi_{t+\tau} - \xi_{t+\tau}'}\right) \cdot \norm{z} + \sum_{\tau = t}^{t+k} C_2' \left(\sum_{i = t}^{t+k} \lambda_2^{i - t + \abs{i - \tau}} \norm{\xi_i - \xi_i'} \right)\cdot (\ell D_{\bar{x}} + D_w)\nonumber\\
    &+ C_2' \lambda_2^k\cdot \left(\sum_{\tau=0}^k \norm{\xi_{t+\tau} - \xi_{t+\tau}'}\right) \cdot \norm{\xi_{t+k}} \label{thm:perturbation-bound-unconstrained-dynamics:e2:s2}\\
    \leq{}& C_2' \sum_{\tau = 0}^k \lambda_2^{2\tau}\delta_{t+\tau} \cdot \norm{z} + C_2'\left(\frac{2 (\ell D_{\bar{x}} + D_w)}{1 - \lambda_2} + R + D_{x^*}\right)\sum_{\tau = 0}^k \lambda_2^\tau \delta_{t+\tau}, \label{thm:perturbation-bound-unconstrained-dynamics:e2:s3}
\end{align}
\end{subequations}
where we use the triangle inequality in \eqref{thm:perturbation-bound-unconstrained-dynamics:e2:s1}; we use \Cref{thm:diff-inverse-sophisticated} and the bounds on each entry of $\hat{\beta}_t^{t+k}(z, \xi_{t:t+k})$ in \eqref{thm:perturbation-bound-unconstrained-dynamics:e2:s2}; we rearrange the terms and use $\xi_{t+k} \in \mathcal{B}(x_{t+k}^*, R)$ in \eqref{thm:perturbation-bound-unconstrained-dynamics:e2:s3}.
For the second error term \eqref{thm:perturbation-bound-unconstrained-dynamics:e1:s2}, we see that
\begin{align}\label{thm:perturbation-bound-unconstrained-dynamics:e3}
    &\norm{\left(\hat{\Upsilon}_{t}^{t+k}(\xi_{t:t+k}')^{-1} \left(\hat{\beta}_t^{t+k}(z, \xi_{t:t+k}) - \hat{\beta}_t^{t+k}(z, \xi_{t:t+k}')\right)\right)_{v_t}}\nonumber\\
    \leq{}& C_2 \sum_{\tau = t}^{t+k} \lambda_2^{\tau - t} (L_w + \ell L_{\bar{x}} + D_{\bar{x}} L_Q) \delta_\tau + C_2 \lambda_2^k \delta_{t+k},
\end{align}
where we use the following inequality to bound the difference between $\hat{\beta}_t^{t+k}(z, \xi_{t:t+k})$ and $\hat{\beta}_t^{t+k}(z, \xi_{t:t+k}')$:
\begin{align*}
    &\norm{Q_\tau(\xi_\tau) \bar{x}_\tau(\xi_\tau) - Q_\tau(\xi_\tau') \bar{x}_\tau(\xi_\tau')}\\
    \leq{}& \norm{Q_\tau(\xi_\tau) \bar{x}_\tau(\xi_\tau) - Q_\tau(\xi_\tau') \bar{x}_\tau(\xi_\tau)} + \norm{Q_\tau(\xi_\tau') \bar{x}_\tau(\xi_\tau) - Q_\tau(\xi_\tau') \bar{x}_\tau(\xi_\tau')}\\
    \leq{}& \norm{Q_\tau(\xi_\tau) - Q_\tau(\xi_\tau')} \cdot \norm{\bar{x}_\tau(\xi_\tau)} + \norm{Q_\tau(\xi_\tau')} \cdot \norm{\bar{x}_\tau(\xi_\tau) - \bar{x}_\tau(\xi_\tau')}\\
    \leq{}& (L_Q D_{\bar{x}} + \ell L_{\bar{x}}) \delta_\tau.
\end{align*}
Substituting \eqref{thm:perturbation-bound-unconstrained-dynamics:e2} and \eqref{thm:perturbation-bound-unconstrained-dynamics:e3} into \eqref{thm:perturbation-bound-unconstrained-dynamics:e1} gives that for any $t < T - k$,
\[\norm{\psi_t^{t+k}\left(z, \xi_{t:t+k}; \mathbb{I}\right)_{v_t} - \psi_t^{t+k}\left(z, \xi_{t:t+k}'; \mathbb{I}\right)_{v_t}} \leq \left(\sum_{\tau=0}^{k} q_1(\tau) \delta_{t+\tau}\right) \norm{z} + \sum_{\tau = 0}^{k} q_2(\tau) \delta_{t+\tau}\]
under the specification that $\xi_{t:t+k-1} \in \varXi_{t:t+k-1}, \xi_{t:t+k-1}' = \xi_{t:t+k-1}^*;~ \xi_{t+k}, \xi_{t+k}' \in \mathcal{B}(x_{t+k}^*, R)$. We can use a similar methods to show that for any $t \geq T - k$,
\[\norm{\psi_t^{T}\left(z, \xi_{t:T}; F_T\right)_{v_t} - \psi_t^{T}\left(z, \xi_{t:T}'; F_T\right)_{v_t}} \leq \left(\sum_{\tau=0}^{T-t} q_1(\tau) \delta_{t+\tau}\right) \norm{z} + \sum_{\tau = 0}^{T-t} q_2(\tau) \delta_{t+\tau}\]
under the specification that $\xi_{t:T} \in \varXi_{t:T}, \xi_{t:T}' = \xi_{t:T}^*$.

For any $t < T$, we see that
\begin{subequations}\label{thm:perturbation-bound-unconstrained-dynamics:e4}
\begin{align}
    &\norm{\psi_t^T(z, \xi_{t:T}^*; F_T)_{y_\tau/v_\tau} - \psi_t^T(z', \xi_{t:T}^*; F_T)_{y_\tau/v_\tau}}\nonumber\\
    \leq{}& \norm{\left(\Upsilon_{t}^{T}(\xi_{t:T}^*)^{-1} \left(\beta_t^{T}(z, \xi_{t:T}^*) - \beta_t^T(z', \xi_{t:T}^*)\right)\right)_{y_\tau/v_\tau}}\label{thm:perturbation-bound-unconstrained-dynamics:e4:s1}\\
    \leq{}& \norm{\left(\Upsilon_{t}^{T}(\xi_{t:T}^*)^{-1}\right)_{\tau t}} \cdot \norm{z - z'}\nonumber\\
    \leq{}& C_2 \lambda_2^{\tau - t} \norm{z - z'},\label{thm:perturbation-bound-unconstrained-dynamics:e4:s2}
\end{align}
\end{subequations}
where we use the KKT condition in \eqref{thm:perturbation-bound-unconstrained-dynamics:e4:s1}; we use \Cref{thm:diff-inverse-sophisticated} in \eqref{thm:perturbation-bound-unconstrained-dynamics:e4:s2}.
\end{proof}

Now we come back to the proof of \Cref{thm:perturbation:unconstrained-LTV-pred-err-dynamics}.
\begin{proof}[Proof of \Cref{thm:perturbation:unconstrained-LTV-pred-err-dynamics}]
By \Cref{thm:perturbation-bound-unconstrained-dynamics}, Property \ref{assump:pipeline-perturbation-bounds} holds for arbitrary positive constant $R$ and $q_1(t) = H_2 \lambda_2^{2t}$, $q_2(t) = H_2 \lambda_2^t$, and $q_3(t) = H_2 \lambda_2^t$, where the decay rate $\lambda_2 \in (0, 1)$ and constant $H_2$ depends on $R$. We set $R \coloneqq D_{x^*} + D_{\bar{x}}$ so that $\MPC_k$ with terminal state $\bar{x}_{t+k}(\xi_{t+k\mid t})$ satisfies the assumption of \Cref{thm:the-pipeline-theorem}. The constant $H_2$ is given by
\[H_2 = C_2'\left(\frac{2 (\ell D_{\bar{x}} + D_w)}{1 - \lambda_2} + 2D_{x^*} + D_{\bar{x}} + 1\right) + C_2 \left(L_w + \ell L_{\bar{x}} + D_{\bar{x}} L_Q + 1\right).\]
By \Cref{thm:the-pipeline-theorem}, in order to achieve the claimed dynamic regret bound in \Cref{thm:perturbation:unconstrained-LTV-pred-err-dynamics}, a sufficient condition is that the prediction errors $\rho_{t, \tau}$ satisfy
\[\sum_{\tau = 0}^k \lambda_2^\tau \rho_{t, \tau} \leq \frac{(1 - \lambda_2)^2(D_{x^*} + D_{\bar{x}})}{2 H_2^2 L_g \left((1 - \lambda_2)(D_{x^*} + D_{\bar{x}}) + H_2(D_{x^*} + 1)\right)},\]
and the prediction horizon $k$ satisfies that
\[\lambda_2^k \leq \frac{(1 - \lambda_2)^2}{4 H_2^2 L_g \left((1 - \lambda_2)(D_{x^*} + D_{\bar{x}}) + H_2(D_{x^*} + 1)\right)}.\]
\end{proof}

{
We provide two example systems where the dynamics is unknown but the uniform controllability assumption (Assumption \ref{assump:unconstrained-LTV-pred-err-dynamics}) is satisfied for all possible uncertainty parameters.

\begin{example}[Inverted pendulum with unknown mass]\label{example:inverted-pendulum}
We consider the linearized inverted pendulum dynamics in discrete state space. The dynamics of the system is given by
\begin{align*}
    \begin{bmatrix}
    x_{t+1}\\
    \dot{x}_{t+1}\\
    \phi_{t+1}\\
    \dot{\phi}_{t+1}
    \end{bmatrix} = \underbrace{\begin{bmatrix}
    1 & \delta & 0 & 0\\
    0 & 1 + \frac{-(I + m l^2) b\delta}{I(M + m) + M m l^2} & \frac{m^2 g l^2 \delta}{I(M + m) + M m l^2} & 0\\
    0 & 0 & 1 & \delta\\
    0 & \frac{- m l b \delta}{I(M + m) + M m l^2} & \frac{m g l (M + m)\delta}{I(M + m) + M m l^2} & 1
    \end{bmatrix}}_{A(M)} \begin{bmatrix}
    x_{t}\\
    \dot{x}_{t}\\
    \phi_{t}\\
    \dot{\phi}_{t}
    \end{bmatrix} + \underbrace{\begin{bmatrix}
    0\\
    \frac{(I + m l^2)\delta}{I(M + m) + M m l^2}\\
    0\\
    \frac{ml\delta}{I(M + m) + M m l^2}
    \end{bmatrix}}_{B(M)} u_t,
\end{align*}
where $M$ is the mass of the cart, $m$ is the mass of the pendulum, $b$ is the coefficient of friction for cart, $l$ is the length to the center of the mass of the pendulum, $I$ is the mass moment of inertia of the pendulum, $x_t$ is the position of the cart, $\phi_y$ is the angle of the pendulum, and $\delta$ is step size of discretization. For simplicity, we assume all system parameters are known except the mass of the cart $M$, which is an unknown value in the interval $\left[\underline{M}, \overline{M}\right]$ for some constants $\overline{M} > \underline{M} > 0$. The controllability matrix of the dynamical system satisfies that
\begin{align*}
    \det\left[B(M), A(M) B(M), A(M)^2 B(M), A^3(M) B(M)\right] &= \frac{\delta^{10}g^2 l^4 m^4}{(I M + m (I + l^2 M))^4}\\
    &\geq \frac{\delta^{10}g^2 l^4 m^4}{(I \overline{M} + m (I + l^2 \overline{M}))^4} > 0.
\end{align*}
Therefore, we can use Theorem 1 in \cite{hong1992lower} to show
\[\sigma_{min}\left[B(M), A(M) B(M), A(M)^2 B(M), A^3(M) B(M)\right] \geq \sigma_0\]
for some positive constant $\sigma_0$, which implies Assumption \ref{assump:unconstrained-LTV-pred-err-dynamics} by Lemma 12 in \cite{shin2021controllability}.
\end{example}

\begin{example}[Frequency regulation with unknown inertia]\label{example:frequency-regulation}
We consider the power grid dynamics with $n$ nodes studied in \cite{gonzalez2019powergrid}:
\[\underbrace{\begin{bmatrix}
        \dot{\theta} \\ \dot{\omega}
      \end{bmatrix}}_{\dot{x}(t)}
      = \underbrace{\begin{bmatrix}
        0 & I \\
        -M_{q(t)}^{-1}L & -M_{q(t)}^{-1}D
      \end{bmatrix}}_{\hat{A}(t)} 
      \underbrace{\begin{bmatrix}
        \theta \\ \omega
      \end{bmatrix}}_{x(t)}
      + \underbrace{\begin{bmatrix}
        0 \\ M_{q(t)}^{-1}
      \end{bmatrix}}_{\hat{B}(t)}
      \underbrace{p_{\text{in}}}_{u(t)}. \]
Here, $\theta, \omega \in \mathbb{R}^n$ are the vectors of voltage phase angles and frequencies. The state $x(t)$ is a stacked vector of $\theta$ and $\omega$, and $u(t) = p_{in}$ is the power input. $D$ is a diagonal matrix whose entries represent droop control coefficients, and $L$ is the Laplacian matrix of the network of nodes. $M_{q(t)}$ is the inertia matrix in mode $q(t) \in \{1, \ldots, m\}$ and is time-varying. It is further assumed to be in the form $M_{q(t)} = m_{q(t)} I$, where $m_{q(t)}$ is a scalar that represents the inertia coefficient at time $t$. We assume all system parameters are known except the inertia coefficient, and we define $\xi_t \coloneqq m_{q(t)}$.

For simplicity, we use a different discretization technique with \cite{gonzalez2019powergrid} so that it is easier to verify uniform controllability. Specifically, we write the discrete-time system as
\[x_{t+1} = \underbrace{(I + \delta \hat{A}(t))}_{A(\xi_t)} x_t + \underbrace{\delta \hat{B}(t)}_{B(\xi_t)} u_t,\]
where $\delta$ is the step size of discretization. We see that when $0 < \underline{m} \leq m_{q(t)} \leq \overline{m} < +\infty$ holds for some positive constants $\underline{m}, \overline{m}$ for all possible inertia coefficient, we have
\[\abs{\det\left[B(\xi_{t+1}), A(\xi_{t+1}) B(\xi_t)\right]} \geq \frac{\delta^{3n}}{\overline{m}^{2n}} > 0.\]
Thus, we can use Theorem 1 in \cite{hong1992lower} to show that
\[\sigma_{min}\left[B(\xi_{t+1}), A(\xi_{t+1}) B(\xi_t)\right] \geq \sigma_0\]
holds for all possible inertia coefficient at all time steps for some positive constant $\sigma_0$, which implies Assumption \ref{assump:unconstrained-LTV-pred-err-dynamics} by Lemma 12 in \cite{shin2021controllability}.
\end{example}
}

\section{Assumptions and Proofs of Section \ref{sec:general}}\label{appendix:general}
To introduce the SSOSC assumption, we first define the \textit{reduced Hessian} of the Lagrangian.

\begin{definition}[reduced Hessian]
   For a constrained optimization problem with primal variable $z$ and dual variable $\eta$, let $H = \nabla^2_{zz} \mathcal{L}$ denote the Hessian of the Lagrangian $\mathcal{L}(z, \eta; \xi)$. Let $G$ denote the \textbf{active constraints Jacobian}, i.e. Jacobian of all equality constraints and active inequality constraints, and let $Z$ be the null-space matrix of $G$ (i.e., the column vectors of $Z$ form an orthonormal basis of the null space of $G$). Then the \textbf{reduced Hessian} is defined as $\Hre(z, \eta; \xi) := Z^{\top} H Z$.
\end{definition}

We define the concept of \textit{singular spectrum bounds} for a specific instance of FTOCP:

\begin{definition}[singular spectrum bounds]\label{def:controllability-tuple}
Consider the FTOCP $\iota_{t_1}^{t_2}(z, \xi_{t_1:t_2}; F)$. The positive real numbers $\overline{\sigma}_H, \overline{\sigma}_R, \underline{\sigma}_{H}$ are called \textbf{singular spectrum bounds} for this specific instance of FTOCP\footnote{We remind the reader that the functions $\overline{\overline{\sigma}}_H, \overline{\overline{\sigma}}_R,$ and $\underline{\underline{\sigma}}_H$ depend on the form of FTOCP, i.e.,  different horizon $[t_1, t_2]$ and different terminal cost function $F$.} if they satisfy that
\[\overline{\sigma}_H \geq \overline{\overline{\sigma}}_H(z, \xi_{t_1:t_2}), \overline{\sigma}_R \geq \overline{\overline{\sigma}}_R(z, \xi_{t_1:t_2}), \text{ and } 0 < \underline{\sigma}_H \leq \underline{\underline{\sigma}}_H(z, \xi_{t_1:t_2}),\]
where $\overline{\overline{\sigma}}_H, \overline{\overline{\sigma}}_R,$ and $\underline{\underline{\sigma}}_H$ are defined in (4.16a-c) in \cite{shin2021exponential}.
\end{definition}

\begin{assumption}\label{assump:general-dynamical-system}
We make the following assumptions on the costs, dynamics, and constraints of an FTOCP $\iota_{t_1}^{t_2}(z, \xi_{t_1:t_2}; F)$:
\begin{enumerate}
    \item All cost functions, dynamical functions, and constraint functions are twice continuously differentiable in $(x_t, u_t)$ and $\xi_t$ \footnote{If the terminal function $F$ is an indicator function of some state, we view it as a constraint instead of cost.}.
    \item (SSOSC) The reduced Hessian at the optimal primal-dual solution is positive-definite.
    \item (LICQ) The active constraints Jacobian $G$ at the optimal primal-dual solution has full row rank, i.e. $\sigma_{\min}(G) > 0$.
    \item (Uniform singular spectrum bounds) There exist positive singular spectrum bounds $\overline{\sigma}_H, \overline{\sigma}_R, \underline{\sigma}_{H}$ for all FTOCP specifications below:
    \begin{enumerate}
        \item $t_1 = t, t_2 = t + k$ for $t < T - k$:
        \[z \in \mathcal{B}(x_t^*, R), \xi_{t:t+k-1} \in \varXi_{t:t+k-1}, \xi_{t+k} \in \mathcal{B}(x_{t+k}^*, R), F = \mathbb{I}.\]
        \item $t_1 = t, t_2 = T$ for $t < T$:
        \[z \in \mathcal{B}(x_t^*, R), \xi_{t:T} \in \varXi_{t:T}, F = F_T.\]
    \end{enumerate}
\end{enumerate}
\end{assumption}


We remind the readers that Lemma 12 in \cite{shin2021controllability} shows that Lipshitzness of dynamics and uniform controllability together imply uniform LICQ property of the system.

Under Assumption \ref{assump:general-dynamical-system}, we know Property \ref{assump:pipeline-perturbation-bounds} holds for $q_1(t) = 0$, $q_2(t) = H_3 \lambda_3^t$, and $q_3(t) = H_3 \lambda_3^t$ for some $H_3 > 0$ and $\lambda_3 \in (0, 1)$ by Theorem 4.5 in \cite{shin2021exponential}.

\begin{theorem}\label{thm:perturbation-bound-general-system}
Under Assumption \ref{assump:general-dynamical-system} that holds for some $R > 0$, Property \ref{assump:pipeline-perturbation-bounds} holds for $q_1(t) = 0$, $q_2(t) = H_3 \lambda_3^t$, and $q_3(t) = H_3 \lambda_3^t$ with the same $R$. The coefficient $H_3$ and decay factor $\lambda_3$ are given by
\[H_3 \coloneqq \left(\frac{\overline{\sigma}_H \overline{\sigma}_R}{\underline{\sigma}_H^2}\right)^{\frac{1}{2}}, \text{ and } \lambda_3 \coloneqq \left(\frac{\overline{\sigma}_H^2 - \underline{\sigma}_H^2}{\overline{\sigma}_H^2 + \underline{\sigma}_H^2}\right)^{\frac{1}{8}}.\]
\end{theorem}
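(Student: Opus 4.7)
The theorem has two distinct assertions bundled together: (a) Property \ref{assump:pipeline-perturbation-bounds} holds for some positive constant $R$ with the specific decay functions $q_1 \equiv 0$ and $q_2(t) = q_3(t) = H_3 \lambda_3^t$, and (b) under the stated conditions on the terminal cost and prediction quality, a dynamic regret bound of the claimed form. My plan is to treat (a) as the substantive technical content---a standalone perturbation result for constrained nonlinear FTOCPs---and then obtain (b) as a direct application of \Cref{thm:the-pipeline-theorem}.

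For part (a), I invoke Theorem 4.5 of \cite{shin2021exponential}, which provides exponentially-decaying sensitivity bounds on primal-dual KKT solutions of parametric constrained nonlinear programs under SSOSC, LICQ, and uniform bounds on the singular spectrum of the KKT matrix. These are precisely the hypotheses (2)--(4) of \Cref{assump:general-dynamical-system}. I apply the result separately to each of the two FTOCP families listed in Property \ref{assump:pipeline-perturbation-bounds}: the $k$-step MPC subproblems $\iota_t^{t+k}(z,\xi_{t:t+k-1},\zeta_{t+k};\mathbb{I})$ for $t < T-k$, and the tail problems $\iota_t^T(z,\xi_{t:T};F_T)$ for $t \geq T-k$. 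In each case, specializing the Shin et al.\ bound to a pure parameter perturbation (initial state fixed at $\xi_{t:\cdot}^*$) yields $\sum_\tau H_3 \lambda_3^{|\tau - t_1|}\delta_\tau$ with no multiplicative $\norm{z}$ factor, so $q_1 \equiv 0$ and $q_2(t) = H_3 \lambda_3^t$; specializing instead to a pure initial-state perturbation (parameters fixed at truth) gives $H_3 \lambda_3^{t-t_1}\norm{z-z'}$, hence $q_3(t) = H_3 \lambda_3^t$. The absence of a $q_1$ term here contrasts with the LTV setting of \Cref{sec:unconstrained:dynamics}: there, perturbing $A_t, B_t$ acts multiplicatively on $\norm{z}$, whereas Property \ref{assump:pipeline-perturbation-bounds} holds the nominal parameters at their ground-truth values, so only additive shifts appear. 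The constant $R$ must be chosen so that SSOSC, LICQ, and the singular spectrum bounds $\overline{\sigma}_H, \overline{\sigma}_R, \underline{\sigma}_H$ hold uniformly on balls of radius $R$ around each $x_t^*$---this is exactly what \Cref{assump:general-dynamical-system}(4) posits.

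For part (b), with Property \ref{assump:pipeline-perturbation-bounds} in hand I compute $C_1 = 0$, $C_2 = H_3/(1-\lambda_3)$, and take $C_3 = H_3/(1-\lambda_3)$ (enlarging $H_3$ if necessary so that $C_3 \geq 1$). Substituting these into the smallness hypothesis of \Cref{thm:the-pipeline-theorem} specialized to $q_1 \equiv 0$, and absorbing the $\tau = k$ parameter-error summand (bounded by $\rho_{t,k} \leq 1$) into the terminal $2Rq_2(k)$ term, recovers exactly the condition $H_3 \sum_{\tau=0}^{k-1}\lambda_3^\tau \rho_{t,\tau} + 2RH_3 \lambda_3^k \leq (1-\lambda_3)^2 R/(H_3^2 L_g)$ stated in the theorem. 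The terminal-cost requirement $\bar{y}(\xi_{t+k\mid t})\in \mathcal{B}(x_{t+k}^*,R)$ is a direct hypothesis. \Cref{thm:the-pipeline-theorem} then yields $\cost(\MPC_k)-\cost(\OPT) = O(\sqrt{\cost(\OPT)\cdot E}+E)$, where $E = H_3\sum_{\tau=0}^{k-1}\lambda_3^\tau P(\tau) + H_3^2\lambda_3^{2k}T$. Using $\cost(\OPT) = O(T)$ (from the universal properties: bounded $\OPT$, Lipschitz dynamics, nonnegative smooth costs) and absorbing constants into the big-O, this reduces to the advertised bound.

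The hard part is part (a). Theorem 4.5 of \cite{shin2021exponential} is stated as a single combined sensitivity bound on the full KKT vector under a generic perturbation; to match Property \ref{assump:pipeline-perturbation-bounds} I must project onto the primal action at the first time step and separately extract the two forms \eqref{equ:perturbation-bound-fix-initial} and \eqref{equ:perturbation-bound-fix-parameters}, tracking that the decay rate $\lambda_3 = ((\overline{\sigma}_H^2 - \underline{\sigma}_H^2)/(\overline{\sigma}_H^2 + \underline{\sigma}_H^2))^{1/8}$ and prefactor $H_3 = \sqrt{\overline{\sigma}_H\overline{\sigma}_R/\underline{\sigma}_H^2}$ depend only on the uniform singular spectrum bounds. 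More delicately, I must ensure that a single radius $R$ works for \emph{all} FTOCP specifications listed in \Cref{assump:general-dynamical-system}(4)---the sliding $k$-step windows and the shrinking tail alike---and that the Shin et al.\ local neighborhood of validity covers the $R$-ball around each $x_t^*$. Without this simultaneous uniformity one would obtain distinct $(H_3, \lambda_3, R)$ triples at different $t$, breaking the clean statement of Property \ref{assump:pipeline-perturbation-bounds} and invalidating the downstream application of the Pipeline Theorem.
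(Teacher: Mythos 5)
Your part (a) is exactly the paper's proof: the paper establishes Theorem \ref{thm:perturbation-bound-general-system} by directly invoking Theorem 4.5 of \cite{shin2021exponential} under the SSOSC, LICQ, and uniform singular spectrum bound hypotheses of Assumption \ref{assump:general-dynamical-system}, specialized to the two FTOCP families listed in Property \ref{assump:pipeline-perturbation-bounds}, with $H_3$ and $\lambda_3$ determined by $\overline{\sigma}_H, \overline{\sigma}_R, \underline{\sigma}_H$ just as you describe. Note that the statement at hand is only this perturbation-bound claim; your part (b) concerns the separate regret theorem (\Cref{thm:perturbation:general-system}), for which the paper likewise simply combines the present result with the Pipeline Theorem.
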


Combining \Cref{thm:perturbation-bound-general-system} with the Pipeline Theorem (\Cref{thm:the-pipeline-theorem}) finishes the proof of \Cref{thm:perturbation:general-system}. Note that when $\xi_t^* \leq \frac{(1 - \lambda_3)R}{H_3}$, we know that $\norm{\psi_0^T\left(x_0, \mathbf{0}; F_T\right)_{y_{t+k}} - x_{t+k}^*} \leq R$. Thus using $\psi_0^T\left(x_0, \mathbf{0}; F_T\right)_{y_{t+k}}$ as the terminal state of $\MPC_k$ at time step $t$ can satisfy the requirement of \Cref{thm:perturbation:general-system}.



\section{Inventory control with constraints}\label{appendix:inventory-control}
In this section, we present two examples about the perturbation bounds under a simple inventory control dynamics. In the first example, we use the results in \cite{shin2021exponential} to show a general exponentially decaying perturbation bound holds when we only have one-sided constraints on control input $u_t$. 

\begin{theorem}\label{thm:exp-decay-one-side-inventory}
Consider the optimal control problem where the state $x_t$ and the control input $u_t$ are both in $\mathbb{R}$. The dynamics and constraints are given by
\[x_{t+1} = x_t + u_t,\text{ s.t. } x_t \in [-1, 1], u_t \geq - \frac{4}{5}\]
for all $t$. The stage cost is given by $f_t(x_t, u_t; \xi_t)$, where $f_t$ is convex and $\ell$-smooth. We also assume that $f_t$ is $\mu$-strongly convex in its first variable $x_t$. Then, for any positive integer $p \geq 3$, we have
\begin{align}\label{thm:exp-decay-one-side-inventory:e1}
    &\abs{\psi_0^p\left(x_0, \xi_{0:p-1}, \zeta_p ; \mathbb{I}\right)_{x_h} - \psi_0^p\left(x_0', \xi_{0:p-1}', \zeta_p' ; \mathbb{I}\right)_{x_h}}\nonumber\\
    \leq{}& C \left(\lambda^h \abs{x_0 - x_0'} + \sum_{\tau = 0}^{p-1} \lambda^{\abs{h - \tau}} \abs{\xi_\tau - \xi_\tau'} + \lambda^{p-h} \abs{\zeta_p - \zeta_p'}\right),
\end{align}
for all $x_0, \zeta_p \in [-1, 1]$, where $h \in \{1, \ldots, p\}$ and $C > 0, \lambda \in (0, 1)$ are some constants.
\end{theorem}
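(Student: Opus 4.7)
The plan is to verify that the FTOCP instance described in the theorem satisfies \Cref{assump:general-dynamical-system}, after which the bound \eqref{thm:exp-decay-one-side-inventory:e1} follows directly from \Cref{thm:perturbation-bound-general-system} (equivalently, Theorem 4.5 of \cite{shin2021exponential}), which provides exponentially decaying sensitivity of the primal solution with respect to simultaneous perturbations in initial state, stagewise parameters, and terminal state. Concretely, I need to check, at any candidate primal--dual optimum induced by $x_0, \zeta_p \in [-1,1]$ and $\xi_{0:p-1}$: (i) twice continuous differentiability, (ii) SSOSC, (iii) LICQ, and (iv) uniform singular spectrum bounds.

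Differentiability is immediate from the affine dynamics and the assumed $C^2$, convex, $\ell$-smooth stage costs. To verify SSOSC, I would parameterize the tangent space to the active equality constraints (which always include the initial state, the terminal state, and the $p$ dynamics equations). Any tangent direction $(\delta x_{0:p}, \delta u_{0:p-1})$ satisfies $\delta x_0 = \delta x_p = 0$ and $\delta x_{t+1} = \delta x_t + \delta u_t$, so that $\delta x_t = \sum_{\tau<t}\delta u_\tau$ with $\sum_{\tau<p}\delta u_\tau = 0$. The Lagrangian Hessian is block diagonal in $t$, and using $\mu$-strong convexity of each $f_t$ in $x_t$ one obtains a lower bound $\mu \sum_{t=1}^{p-1} (\delta x_t)^2$. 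This quantity vanishes only if all partial sums of the $\delta u_\tau$'s vanish, forcing $\delta u_\tau = 0$ for every $\tau$. Hence the reduced Hessian is strictly positive definite with a lower bound depending only on $\mu$.

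The main obstacle is verifying LICQ together with the uniform singular spectrum bounds in the presence of the one-sided input constraint $u_t \geq -4/5$ and the two-sided state constraint $x_t \in [-1,1]$. This is precisely where the hypothesis $p \geq 3$ plays its role: for any source $x_0 \in [-1,1]$ and any target $\zeta_p \in [-1,1]$, the required net state change is at most $2$ in absolute value, while the maximum achievable decrement over $p \geq 3$ steps is $\tfrac{4p}{5} \geq \tfrac{12}{5}$, leaving a uniform slack of at least $\tfrac{2}{5}$. I would exploit this slack to construct, at any primal--dual optimum, a feasible direction that satisfies the dynamics and endpoint conditions while strictly loosening every active inequality; this strict Mangasarian--Fromovitz-type certificate combined with the explicit block structure (each active inequality contributes a standard basis gradient in either an $x_t$ or a $u_t$ coordinate, while dynamics gradients couple three distinct coordinates) implies linear independence of active constraint gradients, yielding LICQ. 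The same slack, together with the cost Hessian being bounded between $0$ and $\ell I$ and strongly convex in the state block, provides a uniform lower bound on the smallest singular value of the KKT matrix, hence uniform $\underline{\sigma}_H, \overline{\sigma}_H, \overline{\sigma}_R$ independent of $p$, the parameters $\xi_{0:p-1}$, and the endpoints within the admissible region.

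With \Cref{assump:general-dynamical-system} verified, \Cref{thm:perturbation-bound-general-system} gives Property \ref{assump:pipeline-perturbation-bounds} with constants $H_3, \lambda_3$ determined by the singular spectrum bounds obtained above. The final bound \eqref{thm:exp-decay-one-side-inventory:e1}, which involves simultaneous perturbations in initial state, per-stage parameters, and terminal state, follows either by directly invoking the joint sensitivity form of Theorem 4.5 in \cite{shin2021exponential} or by chaining \eqref{equ:perturbation-bound-fix-initial} and \eqref{equ:perturbation-bound-fix-parameters} via a telescoping triangle-inequality argument of the type used in \cite{lin2021perturbation, shin2021controllability}; taking $C = H_3$ and $\lambda = \lambda_3$ completes the proof.
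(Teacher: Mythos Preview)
Your overall strategy matches the paper's: reduce to Theorem 4.5 of \cite{shin2021exponential} by checking SSOSC, LICQ, and uniform singular spectrum bounds. Your SSOSC argument is fine. The gap is in the LICQ and uniform $\underline{\sigma}_H$ step.

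The slack argument you give is \emph{global}: over the entire horizon of length $p\geq 3$ the maximum decrement $\tfrac{4p}{5}$ exceeds the required net change by at least $\tfrac{2}{5}$. This yields a strictly feasible interior direction (an MFCQ certificate), but MFCQ does not imply LICQ, and the step ``combined with the explicit block structure \ldots\ implies linear independence'' is not justified. More seriously, even if LICQ held pointwise, your argument gives no control on how $\sigma_{\min}(G)$ depends on $p$: a single global slack of $\tfrac{2}{5}$ spread over $p$ steps is compatible with the smallest singular value of the active Jacobian decaying to zero as $p\to\infty$, which would defeat the uniform bound $\underline{\sigma}_H$ that Theorem 4.5 of \cite{shin2021exponential} requires.

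The paper's proof replaces the global slack with a \emph{local} combinatorial observation. After eliminating $u_t = x_{t+1}-x_t$, the inequality constraints are $x_t\in[-1,1]$ and the coupling constraints $x_t-x_{t-1}\geq -\tfrac{4}{5}$. If three consecutive coupling constraints were simultaneously active, one would have $x_{t+2}=x_{t-1}-\tfrac{12}{5}\leq 1-\tfrac{12}{5}<-1$, contradicting the state bound. Hence in any window of three consecutive steps at most two coupling constraints are active; consequently, on each block of the structure used in \cite{shin2021exponential}, the active constraint Jacobian is a submatrix of a fixed $3\times 3$ lower-bidiagonal matrix, whose smallest singular value $\sigma_0$ is a fixed positive constant. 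This gives a $p$-independent lower bound on $\sigma_{\min}(G)$ and, via \Cref{lemma:block-singular-value}, the uniform $\underline{\sigma}_H$ needed. That local ``at most two of three'' observation is the missing idea in your argument.
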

\begin{proof}[Proof of \Cref{thm:exp-decay-one-side-inventory}]
We can rewrite the optimization problem to remove the equality constraints as following:
\begin{subequations}\label{thm:exp-decay-one-side-inventory:e2}
\begin{align}
    \min_{x_{1:p-1}}& \sum_{t=0}^{p-1} f_t(x_t, x_{t+1} - x_{t}; \xi_t)\\
    \text{ s.t. }& -1 \leq x_t \leq 1, \forall t \in \{1, 2, \ldots, p-1\},\\
    & x_t - x_{t-1} \geq - \frac{4}{5}, \forall t \in \{1, 2, \ldots, p\},
\end{align}
\end{subequations}
where $x_p = \zeta_p$.

Note that for any time index $t \in \{1, 2, \ldots, p - 1\}$, at most 2 constraints that involves $x_t$ can be active. They can be chosen from the 4 possible constraints that involves $x_t$:
\[x_t \geq -1, x_t \leq 1, x_{t} - x_{t-1} \geq -\frac{4}{5}, x_{t+1} - x_t \geq -\frac{4}{5}.\]
And for any time index $t \in \{1, 2, \ldots, p - 2\}$, the 3 consecutive ``coupling'' constraints
\begin{equation}\label{thm:exp-decay-one-side-inventory:e3}
    x_{t} - x_{t-1} \geq -\frac{4}{5}, x_{t+1} - x_t \geq -\frac{4}{5}, x_{t+2} - x_{t+1} \geq -\frac{4}{5},
\end{equation}
cannot activate simultaneously. Let $\sigma_0$ denote the smallest singular value of matrix
\[\begin{bmatrix}
1 & & \\
-1 & 1& \\
 & -1 & 1
\end{bmatrix}.\]
Therefore, in the context of Theorem 4.5 in \cite{shin2021exponential}, we see that $\underline{\sigma}\left(\nabla_{xy} \mathcal{L}\left(z^\dagger (\xi); \xi\right)[\mathcal{B}, \mathcal{B}]\right)$ is lower bounded by $\sigma_0$ and upper bounded by $2$. Since we also have that
\[\mu I \preceq \nabla_{xx} \mathcal{L}\left(z^\dagger (\xi); \xi\right)[\mathcal{B}, \mathcal{B}] \preceq 5\ell I.\]
By \Cref{lemma:block-singular-value}, we further see that we can set $\underline{\sigma}_H$ and $\overline{\sigma}_H$ as
\[\underline{\sigma}_H \coloneqq 2 \min(\mu, 1) \sqrt{\frac{5\ell}{10 \mu \ell + \mu \sigma_0^2}}, \overline{\sigma}_H \coloneqq \sqrt{2}(5\ell + 2).\]
We can set $\overline{\sigma}_R \coloneqq \ell$. Applying Theorem 4.5 in \cite{shin2021exponential} finishes the proof.
\end{proof}

As a remark, we have already certified Assumption \ref{assump:general-dynamical-system} in the proof of \Cref{thm:exp-decay-one-side-inventory} and the perturbation bound provided by \Cref{thm:exp-decay-one-side-inventory} is more general than the statement of \Cref{thm:perturbation-bound-general-system}.

While \Cref{thm:exp-decay-one-side-inventory} shows that Assumption \ref{assump:general-dynamical-system} is not vacuous, and we present a negative result in \Cref{thm:exp-decay-impossible} which shows exponentially decaying perturbation bounds may not hold when Assumption \ref{assump:general-dynamical-system} is not satisfied.

\begin{theorem}\label{thm:exp-decay-impossible}
Consider the optimal control problem where the state $x_t$ and the control input $u_t$ are both in $\mathbb{R}$. The dynamics and constraints are given by
\[x_{t+1} = x_t + u_t,\text{ s.t. } x_t \in [-1, 1], u_t \in \left[-\frac{4}{5}, \frac{4}{5}\right]\]
for all $t$. The stage cost is given by $f_t(x_t, u_t; \xi_t) = (x_t - \xi_t)^2$, where $\xi_t = \frac{4}{5}$ if $t$ is odd, and $\xi_t = -\frac{4}{5}$ if $t$ is even. For any $p$ is even, we have
\begin{equation}\label{thm:exp-decay-impossible:e1}
    \abs{\psi_{0}^p\left(0, \xi_{0:p-1}, -\frac{2}{5}; \mathbb{I}\right)_{x_h} - \psi_{0}^p\left(0, \xi_{0:p-1}, -\frac{2}{5} + \epsilon; \mathbb{I}\right)_{x_h}} = \epsilon
\end{equation}
holds for any $\epsilon \in [0, \frac{2}{5(p - 1)}]$ and $h \in \{1, \ldots, p\}$. For any $p$ is odd, we have
\begin{equation}\label{thm:exp-decay-impossible:e2}
    \abs{\psi_{0}^p\left(0, \xi_{0:p-1}, \frac{2}{5}; \mathbb{I}\right)_{x_h} - \psi_{0}^p\left(0, \xi_{0:p-1}, \frac{2}{5} + \epsilon; \mathbb{I}\right)_{x_h}} = \epsilon
\end{equation}
holds for any $\epsilon \in \left[0, \frac{2}{5p}\right]$ and $h \in \{1, \ldots, p\}$.
\end{theorem}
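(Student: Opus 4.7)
My plan is to prove \Cref{thm:exp-decay-impossible} by explicitly constructing the optimal trajectory at both endpoints and verifying the KKT conditions. First, I will identify the unperturbed optimum. For $p$ even with target $x_p = -2/5$, I conjecture $x_t^*(0) = 2/5$ whenever $t$ is odd and $x_t^*(0) = -2/5$ whenever $t \geq 2$ is even, realized by $u_0^*(0) = 2/5$ (interior) and $u_t^*(0) \in \{-4/5,\, 4/5\}$ saturated and alternating for $t \geq 1$. The same alternating profile will furnish the unperturbed optimum for $p$ odd with target $x_p = 2/5$, since $x_p$ lies on the odd-index value $+2/5$. The crucial structural feature is that every $u_t$ with $t \geq 1$ sits on a boundary of $[-4/5,\, 4/5]$, so no local redistribution of a terminal perturbation is possible along the interior of the horizon: $u_0$ alone must absorb any shift in the terminal target.

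Second, I will verify KKT at $\epsilon = 0$. Writing the cost as $f(u) = \sum_{t=1}^{p-1}(x_t - \xi_t)^2$ with $x_t = \sum_{\tau < t} u_\tau$, I compute $\partial f/\partial u_t = 2\sum_{s=t+1}^{p-1}(x_s - \xi_s)$ and evaluate it in closed form using the residuals $x_s - \xi_s = (-1)^s\cdot 2/5$ for $s \geq 1$. Introducing a multiplier $\nu$ for the terminal equality $\sum_t u_t = \text{target}$ and nonnegative multipliers $\mu_t^{\pm}$ for the box constraints, I will read off an explicit KKT point at the candidate trajectory. Strict convexity of the cost in $x_{1:p-1}$ together with the convex feasible polytope will then guarantee that this KKT point is the unique global optimum.

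Third, I will propose the uniformly shifted candidate $x_t^*(\epsilon) = x_t^*(0) + \epsilon$ for $t \geq 1$, realized by setting $u_0^*(\epsilon) = 2/5 + \epsilon$ and leaving every other $u_t$ at its $\epsilon = 0$ value; this automatically satisfies the perturbed terminal constraint. Feasibility of the control and state boxes within the claimed range is immediate. Re-solving KKT at the shifted point, the gradient picks up a linear correction $2(p-1-t)\epsilon$, yielding $\mu_t^- = 4/5 - 2t\epsilon$ at the lower-bound-active indices and $\mu_t^+ = 2t\epsilon$ at the upper-bound-active indices. Requiring $\mu_t^- \geq 0$ at the largest lower-bound-active odd index (which is $t = p-1$ for $p$ even) gives exactly the threshold $2/(5(p-1))$ of \eqref{thm:exp-decay-impossible:e1}, and the bound for the $p$-odd case follows by the same argument (the claimed $2/(5p)$ is a conservative choice within the actual feasibility window). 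Once KKT is re-established, uniqueness of the optimum yields $\abs{\psi_0^p(\cdot)_{x_h} - \psi_0^p(\cdot)_{x_h}} = \epsilon$ for every $h \in \{1,\ldots,p\}$ by direct construction.

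The main obstacle I anticipate is the careful bookkeeping of active constraints and multipliers, complicated by a genuine KKT degeneracy at $\epsilon = 0$: at the upper-bound-active indices one finds $\mu_t^+ = 0$ even though $u_t = 4/5$ is on the boundary, which is exactly why a perturbation along the shifted direction has zero first-order cost change and propagates rigidly. The parity case split between $p$ even and $p$ odd is unavoidable because it determines which controls sit on the lower versus upper boundary, and therefore which multiplier becomes binding first as $\epsilon$ grows.
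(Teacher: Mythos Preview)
Your proposal is correct and takes a genuinely different route from the paper. The paper proceeds by induction on the half-horizon $q$ (with $p=2q$) to establish a closed-form expression for the optimal \emph{cost} $\iota_0^{2q}(0,\xi_{0:2q-1},-\tfrac{2}{5}+\epsilon;\mathbb{I})$ across three ranges of $\epsilon$, and only afterward reads off the optimal trajectory; the odd case is then reduced to the even one by optimizing over $x_{p-1}$. You instead guess the optimizer outright and certify it via KKT: you compute $\partial f/\partial u_t$ in closed form from the alternating residuals, pin down $\nu$ from the interior coordinate $u_0$, and obtain $\mu_t^-=\tfrac{4}{5}-2t\epsilon$ at the lower-bound-active odd indices and $\mu_t^+=2t\epsilon$ at the upper-bound-active even indices, with dual feasibility at $t=p-1$ delivering precisely the threshold $\tfrac{2}{5(p-1)}$. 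Uniqueness then follows from strict convexity of the objective in $x_{1:p-1}$ over the convex feasible polytope (the state box constraints are inactive throughout, which you should note explicitly). Your argument is shorter and more transparent about \emph{why} the perturbation propagates rigidly---the degenerate multipliers $\mu_t^+=0$ at $\epsilon=0$ show there is no first-order penalty for sliding the whole trajectory---and it also reveals that the odd-$p$ claim actually holds on the larger interval $[0,\tfrac{2}{5(p-2)}]$ rather than the stated $[0,\tfrac{2}{5p}]$. The paper's inductive approach, by contrast, yields extra information about the value function outside the small-$\epsilon$ regime (the other two branches of \eqref{thm:exp-decay-impossible:e3}), which is not needed for the theorem but clarifies where optimality of the shifted trajectory breaks down.
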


Before presenting the proof of \Cref{thm:exp-decay-impossible}, we want to add a remark about why a similar proof as \Cref{thm:exp-decay-one-side-inventory} cannot work here. Note that a key property we leveraged in the proof of \Cref{thm:exp-decay-one-side-inventory} is that any three consecutive ``coupling'' constraints \eqref{thm:exp-decay-one-side-inventory:e3} cannot activate simultaneously. This is no longer the case when $u_t$ has two sides of constraints, i.e., $u_t \in \left[-\frac{4}{5}, \frac{4}{5}\right]$. As a result, the smallest singular value of matrix $\nabla_{xy} \mathcal{L}\left(z^\dagger (\xi); \xi\right)[\mathcal{B}, \mathcal{B}]$ can be arbitrarily small (i.e., decaying w.r.t. the horizon length $p$). Thus, Assumption \ref{assump:general-dynamical-system} is not satisfied because $\underline{\sigma}_H$ cannot be set as a positive constant, and the same proof as \Cref{thm:exp-decay-one-side-inventory} can no longer work. We will leverage this intuition to construct a counterexample to show \Cref{thm:exp-decay-impossible}: We construct a sequence of cost functions so that $u_t$ reaches either its lower bound $-\frac{4}{5}$ or its upper bound $\frac{4}{5}$ at every time step.

\begin{proof}[Proof of \Cref{thm:exp-decay-impossible}]
We first show that \eqref{thm:exp-decay-impossible:e1} holds by induction on $p$. Specifically, we will show that the following holds for any $q \in \mathbb{Z}_+$
\begin{equation}\label{thm:exp-decay-impossible:e3}
    \iota_0^{2q}\left(0, \xi_{0:2q-1}, -\frac{2}{5} + \epsilon; \mathbb{I}\right) \begin{cases}
    = \frac{8(q+2)}{25} + 2q \epsilon^2 &\text{ if } \epsilon \in [0, \frac{2}{5(2q - 1)}],\\
    \geq \frac{8(q+2)}{25} + \frac{8q}{25(2q - 1)^2} &\text{ if } \epsilon \in (\frac{2}{5(2q - 1)}, \frac{7}{5}],\\
    \geq \frac{8(q+2)}{25} &\text{ if } \epsilon \in [-\frac{3}{5}, 0),
    \end{cases}
\end{equation}
by induction on $q$.

It is straightforward to check that $\eqref{thm:exp-decay-impossible:e3}$ holds for $q = 1$. Suppose it holds for $q$. For $q+1$, we consider the following three cases separately:

\textbf{Case 1:} $0 \leq \epsilon \leq \frac{2}{5(2q + 1)}$.

Suppose $x_{2q} = -\frac{2}{5} + \epsilon'$. When $\epsilon' \in [0, \epsilon]$, we should choose $x_{2q+1} = \frac{2}{5} + \epsilon'$ to minimize the total cost. The total cost is given by
\[\frac{8(q+2)}{25} + 2q (\epsilon')^2 + \left(\frac{2}{5} - \epsilon'\right)^2 + \left(\frac{2}{5} + \epsilon\right)^2,\]
and it is minimized at $\epsilon' = \epsilon$. Thus, we achieve the total cost of $\frac{8(q+3)}{25} + 2(q+1)\epsilon^2$. When $\epsilon' > \epsilon$, note that the optimal choice of $x_{2q+1}$ is $\frac{2}{5} + \epsilon$, which is the same as when $\epsilon' = \epsilon$. By the induction assumption on $\iota_0^{2q}$, we see that the total cost incurred is lower bounded by $\frac{8(q+3)}{25} + 2(q+1)\epsilon^2$. When $\epsilon' < 0$, we have $x_{2q+1} \leq \frac{2}{5}$. Therefore, by the induction assumption, the total cost is lower bounded by
\[\frac{8(q + 2)}{25} + \frac{4}{25} + \left(\frac{2}{5} + \epsilon\right)^2 = \frac{8(q+3)}{25} + \epsilon^2 + \frac{4}{5}\epsilon \geq \frac{8(q+1)}{25} + 2(q+1)\epsilon^2.\]
Thus, we have shown that
\[\iota_0^{2q}\left(0, v_{0:2q-1}, -\frac{2}{5} + \epsilon; \mathbb{I}\right) = \frac{8(q+3)}{25} + 2(q+1) \epsilon^2, \forall \epsilon \in \left[0, \frac{2}{5(2q + 1)}\right].\]

\textbf{Case 2:} $\frac{2}{5(2q + 1)} < \epsilon \leq \frac{7}{5}$.

Suppose $x_{2q} = -\frac{2}{5} + \epsilon'$. When $\epsilon' \leq \frac{2}{5}$, we know that $x_{2q+1} \leq \frac{2}{5} + \epsilon' \leq \frac{4}{5}$. The total cost lower bounded by
\[\frac{8(q+2)}{25} + 2q (\epsilon')^2 + \left(\frac{2}{5} - \epsilon'\right)^2 + \left(\frac{2}{5} + \epsilon\right)^2.\]
Note that 
\[\frac{8(q+2)}{25} + 2q (\epsilon')^2 + \left(\frac{2}{5} - \epsilon'\right)^2 \geq \frac{8(q+2)}{25} + 2q \left(\frac{2}{5(2q+1)}\right)^2 + \left(\frac{2}{5} - \frac{2}{5(2q+1)}\right)^2.\]
Thus the total cost is lower bounded by 
\[\frac{8(q+3)}{25} + \frac{8(q + 1)}{25(2q + 1)^2}.\]
When $\epsilon' > \frac{2}{5}$, we see that the total cost is lower bounded by
\begin{align*}
    &\frac{8(q+2)}{25} + 2q (\epsilon')^2 + \left(\frac{2}{5} + \epsilon\right)^2\\
    \geq{}& \frac{8(q+2)}{25} + 2q \left(\frac{2}{5(2q+1)}\right)^2 + \left(\frac{2}{5} - \frac{2}{5(2q+1)}\right)^2 + \left(\frac{2}{5} + \frac{2}{5(2q+1)}\right)^2\\
    ={}& \frac{8(q+3)}{25} + \frac{8(q + 1)}{25(2q + 1)^2}.
\end{align*}

\textbf{Case 3:} $-\frac{3}{5} \leq \epsilon < 0$.

Note that the total cost of steps $0$ to $2q$ is uniformly lower bounded by $\frac{8(q+2)}{25}$ regardless of the choice of $x_{2q}$, and the total cost of steps $(2q+1)$ and $(2q+2)$ is uniformly lower bounded by $\frac{8}{25}$. Therefore, we see that
\[\iota_0^{2(q+1)}\left(0, \xi_{0:2q+1}, -\frac{2}{5} + \epsilon; \mathbb{I}\right) \geq \frac{8(q+3)}{25}.\]

Therefore, by combining the three cases, we have shown that \eqref{thm:exp-decay-impossible:e3} holds for all $q$ by induction. 

By rolling out the optimal states that minimize the total cost, one can show the unique optimal solution is given by
\begin{align*}
    \psi_{0}^p\left(0, \xi_{0:p-1}, -\frac{2}{5} + \epsilon; \mathbb{I}\right)_{x_h} = \begin{cases}
        \frac{2}{5} + \epsilon & \text{ if } h \text{ is odd,}\\
        -\frac{2}{5} + \epsilon & \text{ if } h \text{ is even,}
    \end{cases}
\end{align*}
when $\epsilon \in [0, \frac{2}{5(2q - 1)}]$. This finishes the proof of \eqref{thm:exp-decay-impossible:e1}.

For \eqref{thm:exp-decay-impossible:e2}, suppose $p = 2q + 1$. It is straightforward to verify that \eqref{thm:exp-decay-impossible:e2} holds for $q = 0$. When $q \geq 1$, by \eqref{thm:exp-decay-impossible:e3}, we know that in the optimal solution, we have $x_{2q} = -\frac{2}{5} + \epsilon$. We can further derive that the unique optimal solution is given by
\begin{align*}
    \psi_{0}^p\left(0, \xi_{0:p-1}, -\frac{2}{5} + \epsilon; \mathbb{I}\right)_{x_h} = \begin{cases}
        \frac{2}{5} + \epsilon & \text{ if } h \text{ is odd,}\\
        -\frac{2}{5} + \epsilon & \text{ if } h \text{ is even,}
    \end{cases}
\end{align*}
when $\epsilon \in [0, \frac{2}{5(2q + 1)}]$. This finishes the proof of \eqref{thm:exp-decay-impossible:e2}.
\end{proof}

\end{document}